\theoremstyle{plain}
\newtheorem{theo}{Theorem}[section]
\newtheorem{lemm}[theo]{Lemma}
\newtheorem{prop}[theo]{Proposition}
\newtheorem{coro}[theo]{Corollary}
\theoremstyle{definition}
\newtheorem{defi}[theo]{Definition}
\newtheorem{exam}[theo]{Example}
\theoremstyle{remark}
\newtheorem{rema}[theo]{Remark}
\newcommand{\bfR}{\mathbf{R}}
\newcommand{\bfT}{\mathbf{T}}
\newcommand{\HH}{\mathbb{H}}
\newcommand{\PP}{\mathbb{P}}
\newcommand{\QQ}{\mathbb{Q}}
\newcommand{\RR}{\mathbb{R}}
\newcommand{\CC}{\mathbb{C}}
\newcommand{\DD}{\mathbb{D}}
\newcommand{\ZZ}{\mathbb{Z}}
\newcommand{\cD}{\mathcal{D}}
\newcommand{\cH}{\mathcal{H}}
\newcommand{\cK}{\mathcal{K}}
\newcommand{\cM}{\mathcal{M}}
\newcommand{\cO}{\mathcal{O}}
\newcommand{\cP}{\mathcal{P}}
\newcommand{\cQ}{\mathcal{Q}}
\newcommand{\cS}{\mathcal{S}}
\newcommand{\lind}[1]{_{(#1)}}
\newcommand{\ubind}[1]{^{[#1]}}
\newcommand{\lsta}{_{*}}
\newcommand{\sta}{^{*}}
\newcommand{\dual}{^{\vee}}
\newcommand{\lround}[1]{\left\lfloor #1 \right\rfloor}
\DeclareMathOperator{\Hom}{Hom}
\DeclareMathOperator{\rank}{rank}
\DeclareMathOperator{\codim}{codim}
\DeclareMathOperator{\depth}{depth}
\DeclareMathOperator{\im}{im}
\DeclareMathOperator{\gr}{gr}
\DeclareMathOperator{\an}{an}
\DeclareMathOperator{\Spec}{Spec}
\newcommand{\SheafHom}{\mathcal{H}om}
\newcommand{\SheafExt}{\mathcal{E}xt}
\DeclareMathOperator{\coh}{coh}
\newcommand{\duBois}{\underline{\Omega}}
\DeclareMathOperator{\DR}{DR}
\DeclareMathOperator{\IC}{IC}
\DeclareMathOperator{\IH}{IH}
\DeclareMathOperator{\MHM}{MHM}
\DeclareMathOperator{\HM}{HM}
\DeclareMathOperator{\dR}{dR}
\DeclareMathOperator{\lcd}{lcd}
\DeclareMathOperator{\lcdef}{lcdef}
\DeclareMathOperator{\Ish}{Ish}
\begin{document}
	\thanks{The authors were partially supported by NSF grant DMS-2301463.}
	
	\subjclass[2020]{14B05, 14B15, 14F10, 14M25, 32S35, 52B20}
	
	\author{Hyunsuk Kim}
	
	\address{Department of Mathematics, University of Michigan, 530 Church Street, Ann Arbor, MI 48109, USA}
	
	\email{kimhysuk@umich.edu}	
	
	\author{Sridhar Venkatesh}
	
	\address{Department of Mathematics, University of Michigan, 530 Church Street, Ann Arbor, MI 48109, USA}
	
	\email{srivenk@umich.edu}
	
	\begin{abstract}
        Given an affine toric variety $X$ embedded in a smooth variety, we prove a general result about the mixed Hodge module structure on the local cohomology sheaves of $X$. As a consequence, we prove that the singular cohomology of a proper toric variety is mixed of Hodge--Tate type. Additionally, using these Hodge module techniques, we derive a purely combinatorial result on rational polyhedral cones that has consequences regarding the depth of reflexive differentials on a toric variety.
        
        We then study in detail two important subclasses of toric varieties: those corresponding to cones over simplicial polytopes and those corresponding to cones over simple polytopes. Here, we give a comprehensive description of the local cohomology in terms of the combinatorics of the associated cones, and calculate the Betti numbers (or more precisely, the Hodge--Du Bois diamond) of a projective toric variety associated to a simple polytope. 
	\end{abstract} 
	
	\title[Local cohomology and singular cohomology of toric varieties]{Local cohomology and singular cohomology of toric varieties via mixed Hodge modules}
	
	\maketitle
	
	
	
	\section{Introduction}

In this article, we prove various results about the local cohomology and singular cohomology of toric varieties. The main idea, inspired by \cite{Mustata-Popa22:Hodge-filtration-local-cohomology}, is to exploit the fact that the local cohomology sheaves have a refined structure of \textit{mixed Hodge modules}. 

\noindent \textbf{Local cohomology.} Given a closed subvariety $X$ of a smooth, irreducible complex algebraic variety $Y$, we consider the local cohomology sheaves $\cH_{X}^{q}(\cO_Y)$, where $q$ is a positive integer.
    
    The local cohomology of $\cO_{Y}$ starts from the codimension of $X$ in $Y$, i.e., we have
	$$ \codim_{Y}(X) = \min\{ q : \cH_{X}^{q}(\cO_{Y}) \neq 0 \}.$$
	The \textit{local cohomological dimension}, defined as the other end,
	$$ \lcd(Y, X)  := \max \{ q : \cH_{X}^{q}(\cO_{Y}) \neq 0 \}\footnote{We note that the local cohomological dimension is sometimes defined as the minimal integer $q$ such that the local cohomology of \textit{all} quasi-coherent sheaves $\cM$ on $Y$ vanishes in cohomological degrees $> q$, instead of just for $\cO_{Y}$. However, the two definitions agree (see \cite{Ogus-lcd}*{Proposition 2.1}).},$$
    is a much more subtle and well studied invariant.
    While $\lcd(Y,X)$ depends on the choice of the embedding $X \hookrightarrow Y$, the quantity 
    $$\lcdef(X) := \lcd(Y, X)- \codim_{Y}(X)$$
    only depends on $X$, and we call this the \textit{local cohomological defect}, following the terminology in \cite{Popa-Shen:DuBoisLCDEF}. From the description of the local cohomology in terms of the \v{C}ech complex, it is clear that $\lcd(Y, X)$ cannot exceed the number of equations locally defining $X$. In particular, if $X$ is a local complete intersection (lci) variety, then $\lcd(Y, X) = \codim_{Y}(X)$, i.e., $\lcdef(X) = 0$. Therefore, $\lcdef(X)$ can be thought of as a coarse measure of how far the variety $X$ is from being lci. For a Cohen--Macaulay variety $X$ of dimension $n$, we have $\lcdef(X) \leq \max\{0,n-3\}$ by \cite{Dao-Takagi}.

\noindent \textbf{Mixed Hodge modules.} Given $X$ an irreducible smooth complex projective variety, with $X^{\an}$ denoting its underlying complex manifold, each singular cohomology group $H^k(X^{\an},\mathbb{Q})$ is endowed with a \textit{pure Hodge structure of weight $k$}, i.e. there is a decomposition
\[ H^k(X^{\an},\mathbb{C}) \simeq \bigoplus_{p+q=k} H^{p,q}(X), \]
such that the complex conjugation action on $H^k(X^{\an},\mathbb{C}) = H^k(X^{\an},\mathbb{Q})\otimes_{\mathbb{Q}} \mathbb{C}$ gives 
$$\overline{H^{p,q}(X)} \simeq H^{q,p}(X).$$
This data is equivalent to the data of a decreasing filtration $F^\bullet$ (called the \textit{Hodge filtration}) on $H^k(X^{\an},\mathbb{C})$ by complex subspaces, satisfying the condition
\[ F^p \oplus \overline{F^{k+1-p}} = H^k(X^{\an},\mathbb{C}). \]

Deligne \cite{Deligne-Hodge-2, Deligne-Hodge-3} later generalized this to any complex algebraic variety $X$, and showed that each singular cohomology group $H^k(X^{\an},\mathbb{Q})$ is endowed with a \textit{mixed Hodge structure} i.e., along with the Hodge filtration, there is also a filtration $W_\bullet$ on $H^k(X^{\an},\mathbb{Q})$ (called the \textit{weight filtration}) satisfying the condition that each graded piece $\gr^W_l H^k(X^{\an},\mathbb{Q})$ with the induced Hodge filtration is a pure Hodge structure of weight $l$.

The theory of mixed Hodge modules developed by Morihiko Saito \cite{saito1988modulesdeHodge, saito1990mixedHodgemodules} is a vast generalization of the theory of mixed Hodge structures. The basic object is a \textit{pure Hodge module} of weight $k$, where $k$ is an integer. Roughly speaking, the data of a pure Hodge module on a smooth variety $X$ consists of a (regular, holonomic, left) $\cD_X$-module, where $\cD_X$ denotes the sheaf of differential operators on $X$, along with some additional pieces of data satisfying numerous conditions. Another important notion is that of a \textit{mixed Hodge module} $\cM$. The main additional data here is a filtration called the \textit{weight filtration} $W_\bullet$ such that each graded piece $\gr^W_l \cM$ is a pure Hodge module of weight $l$. Saito describes the category of mixed Hodge modules $\MHM(X)$ and its bounded derived category $D^b\MHM(X)$, and shows that it has the six functor formalism. For a singular variety $X$, a mixed Hodge module is defined by embedding it to a smooth variety and by considering the objects supported on $X$. It turns out that the category of mixed Hodge modules does not depend on the choice of the embedding.\footnote{For varieties that are not embeddable to a smooth variety, one can still consider Hodge modules by locally embedding it into smooth varieties and requiring suitable compatibility conditions. However, all the varieties we consider are quasi-projective, hence embeddable into a smooth variety.} The category $\MHM(pt)$ is just the category of mixed Hodge structures.

By the structure theorem of Hodge modules \cite{saito1988modulesdeHodge}, any polarizable variation of Hodge structures on an open subset of $X_{\mathrm{reg}}$ of weight $w - \dim X$ extends to a pure Hodge module on $X$ of weight $w$ in a unique and controlled manner, where $X_{\mathrm{reg}}$ denotes the smooth locus of $X$. In particular, if we consider the trivial variation of Hodge structures $\QQ_{X_{\mathrm{reg}}}$ on $X_{\mathrm{reg}}$, the associated pure Hodge module on $X$ is called the \textit{intersection cohomology Hodge module} $\IC_{X}^{H}$, which is of weight $\dim X$.

\noindent \textbf{The trivial Hodge module and the Du Bois complex.} The main focus of this article is the so called \textit{trivial Hodge module} $\QQ^{H'}_X := \QQ^H_X[n] \in D^b\MHM(X)$ and its dual $\DD\QQ^{H'}_X$. Both these objects are natural complexes of mixed Hodge modules associated to $X$ and are intimately related to the singular cohomology and the local cohomology of $X$. For instance, pushing forward $\QQ^{H'}_X$ to a point computes the mixed Hodge structure on the singular cohomology of $X$ (see \S\ref{section:hodge-modules}) whereas the dual $\DD\QQ^{H'}_X$ is closely related to the local cohomology of $X$ as follows. If $X \hookrightarrow Y$ is a closed embedding to a smooth variety, then the local cohomology sheaf $\cH^{q}_{X}(\cO_{Y})$ is the underlying $\cD$-module of the mixed Hodge module $i\lsta \cH^{q -c} (\DD \QQ_{X}^{H'})$, where $c = \codim_{Y}(X)$. Consequently, $\lcdef(X)$ is just the cohomological amplitude of $\DD\QQ^{H'}_X$ (or equivalently, of $\QQ^{H'}_X$). See \S \ref{section:hodge-modules} for more details.

Closely related to these Hodge modules is the {\it $p$-th Du Bois complex} $\duBois_{X}^{p} \in D^b_{\coh}(X)$ and its Grothendieck dual $\bfR\SheafHom_{\cO_{X}}(\duBois_{X}^{p}, \omega_{X}^\bullet)$, where $p$ is a non-negative integer. The complex $\duBois_{X}^{p}$ can be thought of as a better-behaved substitute for the K\"ahler differentials $\Omega_{X}^p$ when $X$ is singular. For instance, the Hodge-to-de Rham spectral sequence degenerates at the $E_1$-page for singular projective varieties if one uses the Du Bois complex in place of the K\"ahler differentials. See \S \ref{section:DB-complex} for more details.

Studying the $p$-th Du Bois complex and its Grothendieck dual leads us to interesting insights about the object $\QQ^{H'}_X$ and its dual $\DD\QQ^{H'}_X$, and hence about the singular cohomology and the local cohomology of $X$. For example, when $X$ is proper, the cohomology groups $\HH^q(X,\duBois^p_X)$ are the graded pieces of the Hodge filtration on the singular cohomology $H^{p+q}(X^{\an},\QQ)$, and so, the Betti numbers can be written as a sum of the \textit{Hodge--Du Bois numbers} $\underline{h}^{p, q}(X) := \dim_{\CC} \HH^q(X,\duBois^p_X)$,
\[ \dim_{\CC}H^{k}(X^{\an},\CC) = \sum_{p+q = k} \underline{h}^{p, q}(X).  \]
On the other hand, \cite{Mustata-Popa22:Hodge-filtration-local-cohomology}*{Corollary 5.3} essentially states that the local cohomological defect of $X$ can be controlled via the vanishing of the sheaves $\SheafExt_{\cO_{X}}^i(\duBois_{X}^{p}, \omega_{X}^{\bullet} )$, or more precisely via the \textit{depth} of $\duBois_X^p$, which is given by
$$ \depth \duBois_{X}^{p} = \min \{ i : \SheafExt_{\cO_{X}}^{-i}(\duBois_{X}^{p}, \omega_{X}^{\bullet} ) \neq 0 \}.$$
These ideas have been explored to prove various results regarding the Hodge module structure on the local cohomology of Schubert varieties in \cite{Perlman-local-coh-schubert} and the local cohomology of secant varieties in \cite{OR-local-coh-secant}.

    
\noindent \textbf{Toric varieties.} A toric variety $X$ is a normal algebraic variety containing a torus $T$ as an open dense subset such that the action of $T$ on itself extends to an action of $T$ on the whole variety $X$. Toric varieties provide an interesting interplay between algebraic geometry and convex geometry since they admit an alternate description in terms of convex geometric objects. To be precise, every $n$-dimensional affine toric variety $X$ is associated to a strongly convex rational polyhedral cone $\sigma \subset N \otimes \mathbb{R}$, where $N$ is a free abelian group of rank $n$. More generally, we have a correspondence between toric varieties and \textit{fans}. See \S \ref{section:toric-var-basic} for details.

The singularities of a toric variety are rather mild, for instance, they have rational singularities and are hence Cohen--Macaulay (in particular, the dualizing complex $\omega^\bullet_X$ is equal to the shifted dualizing sheaf $\omega_X[n]$). However, they are typically far from being lci, which makes their local cohomological defect, or more generally, their local cohomology sheaves highly interesting. One subclass of toric varieties for which the trivial Hodge module $\QQ^{H'}_X$ and its dual are well understood is that of \textit{simplicial toric varieties}, they are precisely those toric varieties which have quotient singularities. In this case, we have $\QQ^{H'}_X \simeq \IC^H_X$ and $\lcdef(X) = 0$.


The Du Bois complexes and their Grothendieck duals admit rather nice descriptions when $X$ is a toric variety. By \cite{Guillen-Navarro-Gainza:Hyperresolutions-cubiques}*{V.4}, the Du Bois complex $\duBois_{X}^{p}$ coincides with the sheaf of reflexive differentials $\Omega_{X}\ubind{p}$. On the other hand, it follows by \cite{Ishida2} that its (shifted) Grothendieck dual is given by:
$$ \bfR\SheafHom_{\cO_{X}}(\duBois_{X}^{p}, \omega_{X}) \simeq \Ish_{X}^{n-p},$$
where $\Ish_{X}^{n-p}$ is the $(n-p)$-th \textit{Ishida complex} of $X$, and $n$ is the dimension of $X$. This is a very explicit complex lying in cohomological degrees $0$ to $n-p$, whose terms consist of structure sheaves of various torus-invariant closed subsets. Moreover, if we take into account the torus action, the Ishida complex decomposes into complexes of finite dimensional vector spaces, each of which can be described purely in terms of the convex geometric objects associated to $X$. 
We refer to \S \ref{section:Ishida-complex} for details, where we also present some results of \cite{Ishida,Ishida2} from scratch, using different methods.


    We now state the results in this paper.
	
	\subsection{Hodge module structure of $\QQ_{X}^{H'}$} For a toric variety $X$, we describe the mixed Hodge module structure on all the cohomologies of the object $\QQ_{X}^{H'}$. Roughly speaking, we prove that $\QQ^{H'}_X$ is built out of intersection cohomology Hodge modules along torus-invariant closed subvarieties of $X$, and their Tate twists (which is an operation that changes the weight of a Hodge module, see \S\ref{section:hodge-modules}). We say that a mixed Hodge module $\cM$ has weights in $[a, b]$ if $\gr_{k}^{W} \cM = 0$ for $k \notin [a, b]$.
    
	\begin{theo} \label{theo:MHM-structure}
		Let $X$ be an $n$-dimensional toric variety corresponding to a fan $\cP$. Let $\cP_{d}$ be the collection of $d$-dimensional faces. First, $\QQ_{X}^{H'}$ has no cohomologies outside degrees $[-(n-3), 0]$.
		\begin{enumerate}
			\item The Hodge module $\cH^{0}\QQ_{X}^{H'}$ has weights in $[2, n]$, and the Hodge module $\cH^{-l} \QQ_{X}^{H'}$ has weights in $[2, n-l-1]$ for $l > 0$.
			\item $\gr_{n}^{W} \cH^{0} \QQ_{X}^{H'} \simeq \IC_{X}^{H}$.
			\item For $k \geq l+1$, the weight graded pieces of the cohomologies of $\QQ_{X}^{H'}$ admit a decomposition formula as follows:
			$$\gr_{n-k}^{W} \cH^{-l} \QQ_{X}^{H'} \simeq \bigoplus_{j \geq 1} \bigoplus_{\lambda \in \cP_{k+ 2j}} \IC_{S_{\lambda}}^{H}(-j)^{a_{\lambda}^{l, j}}, $$
            where $S_{\lambda}$ is the torus-invariant closed subvariety corresponding to the cone $\lambda$ (see \S\ref{section:toric-var-basic}).
			For other pairs $(k, l)$, this module is zero.
			\item The numbers $a_{\lambda}^{l, j}$ only depend on the cone $\lambda$.
		\end{enumerate}
	\end{theo}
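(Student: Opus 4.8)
The plan is to prove the statement in two stages. In the first, one shows that every cohomology sheaf $\cH^{-l}\QQ_X^{H'}$ is supported on torus-invariant subvarieties and that each of its weight-graded pieces is a direct sum of Tate twists of intersection cohomology Hodge modules of orbit closures; in the second, one computes the multiplicities and extracts the numerical constraints using the Ishida complex. All the assertions are local and torus-equivariant, so one may assume $X=U_\sigma$ is affine and work with the $M$-graded decomposition of every sheaf in sight. The opening assertion — that $\QQ_X^{H'}$ is concentrated in degrees $[-(n-3),0]$ — is immediate: the duality functor $\DD$ is $t$-exact on $D^b\MHM$, and by the relation recalled in the introduction the cohomological amplitude of $\DD\QQ_X^{H'}$ is $[0,\lcdef(X)]$, whence $\QQ_X^{H'}$ is concentrated in $[-\lcdef(X),0]$; since $X$ is Cohen--Macaulay of dimension $n$, $\lcdef(X)\le\max\{0,n-3\}$ by Dao--Takagi.

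For the first stage, fix $l$. Since $\cH^{-l}\QQ_X^{H'}$ is a $T$-equivariant mixed Hodge module, its support is a union of orbit closures $S_\lambda$, and by Saito's structure theorem each weight-graded piece $\gr_w^W\cH^{-l}\QQ_X^{H'}$, being pure, is a direct sum of modules $\IC_{S_\lambda}^H(L_\lambda)$ for variations of Hodge structure $L_\lambda$ on the dense orbit $O_\lambda$. Now $O_\lambda$ is homogeneous under $T$ with connected (toric) stabiliser, so any $T$-equivariant local system on it is trivial; hence $L_\lambda$ is a constant variation, with fibre a fixed Hodge structure $H_\lambda$. To see that $H_\lambda$ is of Hodge--Tate type — so that $L_\lambda$ is a sum of $\QQ(j)$'s and the piece takes the asserted shape $\bigoplus_{\lambda,j}\IC_{S_\lambda}^H(-j)^{a_\lambda^{l,j}}$ — and to see that the multiplicities $a_\lambda^{l,j}$ depend only on $\lambda$, one inducts on $\dim X$: the transverse slice to $O_\lambda$ in $X$ is the affine toric variety $X_\lambda$ attached to $\lambda$ in the quotient lattice, the restriction of $\QQ_X^{H'}$ to such a slice is $\QQ_{X_\lambda}^{H'}$, and so $\cH^{-l}\QQ_X^{H'}$ near $O_\lambda$ — in particular $H_\lambda$ and the multiplicity $a_\lambda^{l,j}$ — is governed entirely by $\cH^{-l}\QQ_{X_\lambda}^{H'}$ at the fixed point of $X_\lambda$, which by induction is Hodge--Tate and intrinsic to $\lambda$. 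This gives (4). Part (2) is the standard fact that $\QQ_X^{H'}\to\IC_X^H$ is the top-weight quotient and is concentrated in cohomological degree $0$, so $\IC_X^H$ occurs in $\gr_n^W\cH^0\QQ_X^{H'}$ and in no other $\gr_w^W\cH^{-l}\QQ_X^{H'}$; in particular $j\ge 1$ whenever a $\lambda\ne\{0\}$ contributes.

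For the second stage one must compute the $a_\lambda^{l,j}$ and establish (3), from which the weight bounds in (1) follow formally (a Tate twist by $-j$ raises the weight of $\IC_{S_\lambda}^H$ from $n-\dim\lambda$ to $n-\dim\lambda+2j$, forcing $\dim\lambda=k+2j$ for a piece in $\gr_{n-k}^W$, and then $j\ge 1$ together with $\dim\lambda\le n$ yields the ranges $[2,n-l-1]$ and $[2,n]$). The engine is the identification $\bfR\SheafHom_{\cO_X}(\duBois_X^p,\omega_X)\simeq\Ish_X^{n-p}$ together with the fact that $\duBois_X^p=\Omega_X^{[p]}$ is a single sheaf: by Saito's theory the Hodge-filtered de Rham complex of $\DD\QQ_X^{H'}$ is assembled from the complexes $\bfR\SheafHom_{\cO_X}(\duBois_X^p,\omega_X^\bullet)$, i.e.\ from the Ishida complexes, and after passing to $M$-graded pieces these become explicit finite complexes of vector spaces whose cohomology computes reduced cohomology of links of faces of $\sigma$. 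On the other side, the decomposition $\cH^l\DD\QQ_X^{H'}=\bigoplus_{\lambda,j}\IC_{S_\lambda}^H(j)^{a_\lambda^{l,j}}$ (the Verdier dual of what we want, supplied by the first stage) contributes to $\gr^F\DR$ through the Hodge filtration on each $\IC_{S_\lambda}^H$, which is itself known by the induction on dimension. Comparing the two descriptions $M$-degree by $M$-degree — the contribution matrices are triangular in the Hodge level and the support, hence invertible — pins down each $a_\lambda^{l,j}$ as an explicit combinatorial invariant of $\lambda$ and yields the vanishing for $k<l+1$ and the absence of untwisted $\IC$'s of proper orbit closures; dualising back transfers everything to $\QQ_X^{H'}$.

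I expect this last comparison to be the main obstacle: one has to carry the Hodge filtration — not merely the underlying coherent complexes — simultaneously through the Ishida complex and through the decomposition into intersection complexes, and then argue that the weight filtration on each $\cH^{-l}\QQ_X^{H'}$ genuinely splits into exactly the pieces this bookkeeping predicts, rather than merely bounding its graded pieces. Comparing $\QQ_X^{H'}$ with a toric resolution $f\colon X'\to X$, where $\QQ_{X'}^{H'}=\IC_{X'}^H$ is pure and Saito's decomposition theorem applies to $Rf_*\QQ_{X'}^{H'}$ cleanly, provides a useful independent check on which supports and twists can occur; but it introduces its own difficulty — controlling the cone of $\QQ_X^{H'}\to Rf_*\QQ_{X'}^{H'}$, which is again handled by the induction on dimension together with equivariance.
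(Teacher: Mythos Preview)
Your first stage --- induction on dimension via transverse slices, establishing the decomposition away from the fixed point and that $a_\lambda^{l,j}$ depends only on $\lambda$ --- is essentially the paper's Step~1 (phrased there via smooth pullback along $U_\tau\simeq V_\tau\times O_\tau\to V_\tau$) and is fine. Note, though, that the theorem does not ask you to \emph{compute} the $a_\lambda^{l,j}$; it only asserts the shape of the decomposition. What remains after Stage~1 is to show that the piece $\cK_{k,l}$ supported at the torus fixed point is (a) of Hodge--Tate type, (b) of weight $\ge 2$ (i.e.\ $k\le n-2$), and separately (c) the upper weight bound $k\ge l+1$ for $l>0$. Your parenthetical derivation of the weight ranges from $j\ge 1$ and $\dim\lambda\le n$ only gives (b) once (a) is known; it does not give (c).

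Your proposed engine for the rest --- the Ishida identification $\bfR\SheafHom(\duBois_X^p,\omega_X)\simeq\Ish_X^{n-p}$ and matching $M$-graded pieces of $\gr^F\DR$ --- cannot establish (a). The Ishida side records only the \emph{Hodge} filtration on $\cK_{k,l}$, not the \emph{weight} filtration: a non-Hodge--Tate structure at $x_\sigma$ with the same Hodge numbers contributes identically to every $\gr_k\DR$, so Hodge--Tate type is invisible to this comparison. In the paper the logic in fact runs the other way --- Theorem~\ref{theo:MHM-structure} is proved first, and only then is the Ishida/$\gr\DR$ comparison used, to prove Theorem~\ref{theo:depth-of-Du-Bois-general} --- and Appendix~\ref{sec:appendix-explicit-calculation} shows that the ``triangularity'' you invoke already fails in dimension~$6$, so even the multiplicities are not recoverable this way. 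The paper's actual argument for (a)--(c) is: (c) is the general weight bound $\gr_{w}^W\cH^{-l}\QQ_X^{H'}=0$ for $w\ge n-l$, $l>0$ (\cite{Popa-Park:lefschetz}*{Proposition~6.4}); for (a) and (b) one analyzes the stalk $i_{x_\sigma}^*\QQ_X^{H'}\simeq\QQ_{x_\sigma}^H[n]$ via the two spectral sequences for $i_{x_\sigma}^*\cH^q$ and for $i_{x_\sigma}^*\gr_{-r}^W\cH^q$, the key input being that each $\cH^j(i_{x_\sigma}^*\IC_X^H)$ is pure of Hodge--Tate type of weight $j+n$ (Lemma~\ref{lemm:stalk-of-IC-pure-HT}, via a toric resolution and proper base change). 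You mention the toric resolution only as an ``independent check''; in the paper it is, through this lemma, the essential ingredient that closes the induction at the fixed point.
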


    From the mixed Hodge module structure of $\QQ_{X}^{H'}$, it is easy to describe the structure of its dual $\DD \QQ_{X}^{H'}$ which is directly related to local cohomology. Additionally, we show in Appendix \ref{sec:appendix-explicit-calculation} that all the numbers $a_{\lambda}^{l, j}$ can be computed explicitly up to dimension 5 in terms of the Ishida complex and address the limitation of our method in dimension 6. It would be very interesting to know if the numbers $a_{\lambda}^{l, j}$ could be computed explicitly in dimensions $\geq 6$ as well, or better, if a computer program can be written that can take in as input a cone $\lambda$, and give as output these numbers $a_{\lambda}^{l, j}$ associated to $\lambda$.

    We say that a pure Hodge structure $V = \oplus V^{p,q} $ is of \textit{Hodge--Tate type} if the $(p, q)$-part $V^{p,q}$ is zero unless $p = q$. We say that a mixed Hodge structure $V$ is \textit{mixed of Hodge--Tate type} if all the weight graded pieces $\gr_{w}^{W}V$ are pure Hodge structures of Hodge--Tate type. It is a classical fact that the Hodge structures of smooth proper toric varieties are of Hodge--Tate type. Using Theorem \ref{theo:MHM-structure}, we prove the following generalization, which is likely well known to experts.

    \begin{coro}\label{coro:sing-coho-HT-type}
        Let $X$ be a proper toric variety. Then the Hodge structure on $H^{k}(X, \QQ)$ is mixed of Hodge--Tate type.
    \end{coro}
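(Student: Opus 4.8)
The plan is to combine the structure result of Theorem~\ref{theo:MHM-structure} with the Hodge-module computation of singular cohomology. Recall (see \S\ref{section:hodge-modules}) that for $X$ proper of dimension $n$ one has $H^{k}(X,\QQ)=\cH^{k-n}(a_{X\ast}\QQ_{X}^{H'})$ as mixed Hodge structures, where $a_{X}\colon X\to pt$. The first thing to record is that the mixed Hodge structures that are mixed of Hodge--Tate type form a Serre subcategory of the category of all mixed Hodge structures, closed in addition under Tate twists: by strictness of the weight filtration it is enough to observe that a sub-, a quotient, or an extension of pure Hodge--Tate structures of a fixed weight is again of Hodge--Tate type. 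It follows formally that the full subcategory $D^{b}_{\mathrm{HT}}\subset D^{b}\MHM(pt)=D^{b}(\mathrm{MHS})$ of complexes all of whose cohomologies are mixed of Hodge--Tate type is a triangulated subcategory, stable under direct summands. Thus it suffices to prove $a_{X\ast}\QQ_{X}^{H'}\in D^{b}_{\mathrm{HT}}$.

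Next I would unwind Theorem~\ref{theo:MHM-structure}. It presents $\QQ_{X}^{H'}$, through the canonical $t$-structure filtration on $D^{b}\MHM(X)$ together with the weight filtrations of the cohomology modules $\cH^{-l}\QQ_{X}^{H'}$, as an iterated (triangulated) extension of the pure Hodge modules $\IC_{X}^{H}$ and $\IC_{S_{\lambda}}^{H}(-j)$ for cones $\lambda$ of the fan and $j\geq 1$; equivalently, $\QQ_{X}^{H'}$ lies in the smallest triangulated subcategory of $D^{b}\MHM(X)$ containing all the $\IC_{S}^{H}(-j)$ with $S$ a torus-invariant closed subvariety of $X$ and $j\in\ZZ$. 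Since $a_{X\ast}$ is triangulated and $D^{b}_{\mathrm{HT}}$ is triangulated, it is enough to check $a_{X\ast}\IC_{S}^{H}(-j)\in D^{b}_{\mathrm{HT}}$ for each such $S$. Now $S$, being a closed subscheme of the complete variety $X$, is itself complete, and being a torus orbit closure is again a toric variety; and $a_{X\ast}\IC_{S}^{H}(-j)\simeq (a_{S\ast}\IC_{S}^{H})(-j)$ computes the intersection cohomology $IH^{\bullet}(S,\QQ)$ up to a shift and a Tate twist. So the whole statement reduces to the claim: for every complete toric variety $V$, each $IH^{k}(V,\QQ)$ is pure of Hodge--Tate type.

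To prove this claim I would argue as follows. Purity (of weight $k$) is immediate from Saito's theorem on direct images of pure Hodge modules under proper morphisms, applied to $\IC_{V}^{H}$ (which has weight $\dim V$). For the Hodge--Tate property, choose a toric resolution of singularities $\pi\colon\widetilde V\to V$ by subdividing the fan of $V$, so that $\widetilde V$ is a smooth complete toric variety and $\pi$ is proper and birational; by Saito's decomposition theorem $\IC_{V}^{H}$ is a direct summand of $\pi_{\ast}\QQ_{\widetilde V}^{H'}$ in $D^{b}\MHM(V)$, hence (apply $a_{V\ast}$ and take cohomology in the relevant degree) $IH^{k}(V,\QQ)$ is a direct summand, as a mixed Hodge structure, of $H^{k}(\widetilde V,\QQ)$. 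Finally $H^{k}(\widetilde V,\QQ)$ is pure of Hodge--Tate type: this is classical for smooth complete toric varieties, for instance because $\widetilde V$ admits an algebraic cell decomposition into affine spaces (Bialynicki--Birula), so that it vanishes in odd degrees and $H^{2i}(\widetilde V,\QQ)$ is spanned by algebraic cycle classes, hence of type $(i,i)$. As a direct summand of a pure Hodge--Tate structure is again of that type, the claim follows, and with it the corollary. Alternatively, one may simply invoke the well-known fact (Stanley, Fieseler, Denef--Loeser) that the intersection cohomology of a complete toric variety is combinatorially ``even'', being read off the $g$-polynomial of the associated polytope.

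The one ingredient above that is neither formal Serre-subcategory bookkeeping, nor an application of Saito's formalism (direct images, decomposition theorem), nor a direct invocation of Theorem~\ref{theo:MHM-structure} is the Hodge-theoretic description of the cohomology of \emph{smooth} complete toric varieties; this — or, repackaged, the classical computation of toric intersection cohomology — is where the genuine toric geometry enters, and I expect it to be the only real content of the proof, the rest being a matter of chasing the weight filtration and the $t$-structure inside the class of mixed Hodge--Tate structures.
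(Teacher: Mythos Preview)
Your proof is correct and follows essentially the same route as the paper: both reduce, via Theorem~\ref{theo:MHM-structure} and closure of Hodge--Tate structures under subquotients and extensions, to the statement that $\IH^{k}$ of a proper toric variety is pure of Hodge--Tate type, and both prove that lemma by embedding $\IH^{k}(V)$ as a summand of $H^{k}(\widetilde V)$ for a smooth toric resolution $\widetilde V$. The only cosmetic difference is that you phrase the reduction in terms of a triangulated subcategory $D^{b}_{\mathrm{HT}}$ containing $a_{X\ast}\QQ_{X}^{H'}$, while the paper runs the equivalent two spectral sequences (for the $t$-filtration and then the weight filtration) explicitly.
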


    For experts, the rough shape of Theorem \ref{theo:MHM-structure} is perhaps somewhat expected, since the action of the torus gives strong restrictions on the behavior of $\QQ_{X}^{H'}$. However, the range of the weights, the dimensions of the faces that can occur, and the possible Tate twists described in Theorem \ref{theo:MHM-structure} play a crucial role in the following result regarding the depth of the Du Bois complexes.
	\begin{theo} \label{theo:depth-of-Du-Bois-general}
		Let $X$ be an $n$-dimensional toric variety. Then
		$$ \SheafExt_{\cO_{X}}^{l}(\duBois_{X}^{k}, \omega_{X}) = 0, \quad \text{for } l+k > n,$$
		and 
        $$\SheafExt_{\cO_{X}}^{n-k}(\duBois_{X}^{k}, \omega_{X}) = 0 \text{ for }k \leq n/2.\footnote{We have since found an alternate proof of this statement, see \cite{LCDTV2}*{Theorem 1.4, Proposition 1.5}.}$$
        In particular, we have $\depth(\duBois^k_X) \geq k$ for all $k$, and the inequality is strict for all $0<k \leq n/2$.
	\end{theo}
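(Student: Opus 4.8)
The plan has three stages: reduce the statement to the Ishida complex, translate it through duality into a near-top vanishing for $\DD\QQ_X^{H'}$, and then extract what is needed from Theorem \ref{theo:MHM-structure}.

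\emph{Stage 1.} Since $\duBois_X^l\simeq\Omega_X^{[l]}$ and $\bfR\SheafHom_{\cO_X}(\duBois_X^l,\omega_X)\simeq\Ish_X^{n-l}$, a complex concentrated in cohomological degrees $0$ through $n-l$, we get $\SheafExt_{\cO_X}^k(\duBois_X^l,\omega_X)=\cH^k(\Ish_X^{n-l})=0$ for $k>n-l$, which is the first assertion; and since $X$ is Cohen--Macaulay, $\omega_X^\bullet=\omega_X[n]$, so $\depth\duBois_X^l=\min\{i:\SheafExt_{\cO_X}^{n-i}(\duBois_X^l,\omega_X)\neq 0\}\geq l$. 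When $l\leq 1$ the remaining vanishing is also automatic: $\Omega_X^{[l]}$ is reflexive, hence $S_2$, and $X$ is Cohen--Macaulay, so $\SheafExt_{\cO_X}^{n-l}(\Omega_X^{[l]},\omega_X)$ is supported in codimension $\geq n-l+2>n$ and thus vanishes. Hence the content of the theorem is the vanishing $\cH^{n-l}(\Ish_X^{n-l})=\SheafExt_{\cO_X}^{n-l}(\duBois_X^l,\omega_X)=0$ for $2\leq l\leq n/2$, i.e.\ vanishing of the top cohomology of the Ishida complex, which upgrades $\depth\duBois_X^l\geq l$ to a strict inequality.

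\emph{Stage 2.} With the usual indexing conventions $\gr^F_{-l}\DR_X(\QQ_X^{H'})\simeq\duBois_X^l[n-l]$, and combining this with the compatibility of the graded de Rham functor $\gr^F_\bullet\DR_X(-)$ with Grothendieck--Verdier duality gives $\Ish_X^{n-l}[n]\simeq\bfR\SheafHom_{\cO_X}(\duBois_X^l,\omega_X^\bullet)\simeq\bigl(\gr^F_l\DR_X(\DD\QQ_X^{H'})\bigr)[n-l]$. So the goal becomes $\cH^{n-2l}\bigl(\gr^F_l\DR_X(\DD\QQ_X^{H'})\bigr)=0$; since $n-2l\geq 0$, this is a vanishing near the top of the complex $\gr^F_l\DR_X(\DD\QQ_X^{H'})$. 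Now dualize Theorem \ref{theo:MHM-structure}: $\DD\QQ_X^{H'}$ has cohomology modules $\cH^b(\DD\QQ_X^{H'})=\DD\cH^{-b}\QQ_X^{H'}$ only for $b\in[0,n-3]$, and each is a successive extension of Tate twists $\IC_{S_\lambda}^H(m_\lambda)$ with $\dim\lambda\geq b+3$ — equivalently $\dim S_\lambda\leq n-b-3$ — and $m_\lambda=\dim S_\lambda+j$ for the relevant $j\geq 1$, plus one extra summand $\IC_X^H(n)$ when $b=0$. Applying $\gr^F_l\DR_X(-)$, running the hypercohomology spectral sequence of the filtration of $\DD\QQ_X^{H'}$ by its cohomology modules together with the weight filtrations of the latter, and using $\gr^F_l\DR_X\bigl(\IC_{S_\lambda}^H(m_\lambda)\bigr)\simeq(i_\lambda)_*\,\gr^F_{l-m_\lambda}\DR_{S_\lambda}(\IC_{S_\lambda}^H)$ for the closed orbit $i_\lambda\colon S_\lambda\hookrightarrow X$, the problem reduces to: for each face $\lambda$ and each pair $(b,j)$ occurring, the \emph{intersection-cohomology Du Bois complex} $\gr^F_{l-m_\lambda}\DR_{S_\lambda}(\IC_{S_\lambda}^H)$ of the toric variety $S_\lambda$ carries no cohomology in degree $n-2l-b$. (The summand $\IC_X^H(n)$ is handled separately: $\gr^F_l\DR_X(\IC_X^H(n))=\gr^F_{l-n}\DR_X(\IC_X^H)$ sits in cohomological degrees $\leq 0$, and $<0$ away from $X_{\mathrm{reg}}$, so it does not contribute in degree $n-2l\geq 0$.)

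\emph{Stage 3, and the main obstacle.} Writing $p:=m_\lambda-l$, the complex $\gr^F_{-p}\DR_{S_\lambda}(\IC_{S_\lambda}^H)$ is nonzero only for $0\leq p\leq\dim S_\lambda$ and then lies in cohomological degrees between $p-\dim S_\lambda$ and $0$. Feeding in the numerology of Theorem \ref{theo:MHM-structure} — the bound $\dim S_\lambda\leq n-b-3$, the value $m_\lambda=\dim S_\lambda+j$, the inequality $\dim\lambda=k+2j\leq n$, and $b\leq n-3$ — together with $l\leq n/2$ confines all possible contributors to a regime in which $p<\dim S_\lambda$, i.e.\ one never lands on the lowest Hodge piece $\gr^F_{-\dim S_\lambda}\DR_{S_\lambda}(\IC_{S_\lambda}^H)\simeq\omega_{S_\lambda}$. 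The remaining input is then a sharp upper bound on the cohomological amplitude of the complexes $\gr^F_{-p}\DR_{S_\lambda}(\IC_{S_\lambda}^H)$ for toric $S_\lambda$ and $p<\dim S_\lambda$: for smooth $S_\lambda$ each is a single sheaf in a single degree, and in general one expects to prove, by induction on $\dim S_\lambda$ using Theorem \ref{theo:MHM-structure} applied to $S_\lambda$ itself — exploiting that $\gr^F_{-p}\DR_{S_\lambda}(\QQ_{S_\lambda}^{H'})\simeq\Omega_{S_\lambda}^{[p]}[\dim S_\lambda-p]$ is concentrated in a single degree and that $S_\lambda$ has rational (hence Du Bois) singularities — that these complexes vanish above a degree that, in the regime isolated above, lies strictly below $n-2l-b$. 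Establishing this amplitude estimate is precisely the ``purely combinatorial result on rational polyhedral cones'' announced in the introduction, and proving it (with particular care in the boundary case $l=n/2$) is the hard part; granting it, Stage 2 yields $\cH^{n-l}(\Ish_X^{n-l})=0$ for $2\leq l\leq n/2$, hence $\depth\duBois_X^l>l$ there, finishing the proof. (Alternatively one could bypass Stages 2--3 and argue directly on the explicit $T$-equivariant Ishida complex, whose graded pieces are combinatorial complexes attached to the faces of the defining fan, proving vanishing of their top cohomology via the same combinatorial lemma.)
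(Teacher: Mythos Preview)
Your Stages 1 and 2 correctly identify the target as a near-top vanishing for $\gr^F_l\DR(\DD\QQ_X^{H'})$ and set up the right spectral sequence via the weight decomposition from Theorem~\ref{theo:MHM-structure}. But Stage~3 is not a proof: you explicitly write ``establishing this amplitude estimate\ldots\ is the hard part; granting it\ldots'', which is an admission that the argument is incomplete. The ``sharp upper bound on the cohomological amplitude of $\gr^F_{-p}\DR_{S_\lambda}(\IC_{S_\lambda}^H)$'' that you hope to prove by induction is both vague and not what the paper actually uses.

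The paper's argument differs from yours in two essential ways. First, it reduces to $M$-degree~$0$: by Lemma~\ref{lemm:grade-parts-of-Ishida-complex}, the $u$-graded piece of $\Ish_X^{n-l}$ for $u\in\tau_\circ^\ast$ is built from shifted $\Ish_\tau^{j}$'s, so by induction on dimension it suffices to show $H^{n-l}(\Ish_\sigma^{n-l})=0$, i.e.\ to kill only the degree-$0$ part $(\SheafExt^{n-l}(\duBois_X^l,\omega_X))_0$. Second, for that degree-$0$ piece the paper invokes a precise result from \cite{KV:ICTV} (summarized in \S\ref{sec:ICTV-summary}): for a full-dimensional affine toric variety $Z$, one has $(\cH^a\gr_b\DR\IC_Z^H)_0=0$ unless $a+2b=-\dim Z$ and $a<0$. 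This is a single-diagonal constraint, not an amplitude bound; feeding it into the weight spectral sequence forces the equality $q=t$, contradicting the bound $t\geq q+1$ coming from Theorem~\ref{theo:MHM-structure}. Your handling of the summand $\IC_X^H(n)$ is also unjustified as stated: there is no general reason for $\cH^0(\gr^F_{l-n}\DR\IC_X^H)$ to vanish along $X_{\mathrm{sing}}$; what the paper uses is that its \emph{degree-$0$ part} vanishes, again by \cite{KV:ICTV}. Finally, your parenthetical alternative --- a direct combinatorial proof on the graded Ishida complex --- is exactly what the authors say they could \emph{not} carry out for $l\geq 2$; the Hodge-module input via \cite{KV:ICTV} is the crux of the paper's proof, and your sketch does not supply a substitute for it.
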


	The first vanishing is immediate from the fact that $\Ish_{X}^{n-k}$ lies in cohomological degrees $0$ to $n-k$. But the second assertion, which says that the last map of $\Ish_{X}^{n-k}$ is surjective for $k$ up to half the dimension, is much more delicate. As mentioned previously, the Ishida complex can be decomposed into pieces which can be described purely combinatorially without making any references to algebraic geometry. However, we are not able to find a direct combinatorial proof for the surjectivity, except for the case $k = 0,1$. Our proof in fact relies on the Hodge module structure of $\QQ_{X}^{H'}$ in Theorem \ref{theo:MHM-structure} in a crucial way. In spirit, this can be thought of as an analogue of Stanley's work \cite{Stanley:Intersection-cohomology-toric-varieties} which used Hodge theory on toric varieties to derive purely combinatorial consequences regarding polytopes.

    Before we end this subsection, we would like to point out the relation of Theorem \ref{theo:MHM-structure} to the work of Reichelt-Walther \cite{Reichelt-Walther}. Let $X$ be an $n$-dimensional affine toric variety corresponding to a cone $\sigma$, and let $h:(\CC^*)^n \hookrightarrow X$ denote the inclusion of the torus in $X$. Reichelt-Walther compute the Hodge module structure of the mixed Hodge module $h_*\QQ^{H'}_{(\CC^*)^n}$. To be precise, they give a description of the summands of its weight graded pieces in terms of the combinatorics of the cone $\sigma$. It seems possible that this result, along with the exact triangle in \cite{Reichelt-Walther}*{\S 1.4.2}, might lead to an alternate proof of Theorem \ref{theo:MHM-structure} via an inductive procedure. However, an important point to note is that $\QQ^{H'}_X$, unlike $h_*\QQ^{H'}_{(\CC^*)^n}$, is not combinatorially determined (see \cite{LCDTV2}*{Example 1.7}), so it is likely that this strategy might be quite subtle.

    We now discuss two subclasses of affine toric varieties for which Theorem \ref{theo:MHM-structure} can be described in complete detail: the toric varieties corresponding to cones over simplicial polytopes, and cones over simple polytopes. 
    In both these cases, a lot of combinatorial data of the associated cones is reflected in the Hodge module structure of $\QQ^{H'}_X$. While the case of cones over simplicial polytopes is similar to the case of isolated singularities, the results about cones over simple polytopes are much more subtle and rather unexpected.

    \subsection{Cones over simplicial polytopes}

    The first case is that of an affine toric variety associated to a full-dimensional \textit{cone over a simplicial polytope} (see \S\ref{section:toric-var-basic} for the precise definition). The affine toric varieties associated to such cones are precisely those which are simplicial outside the torus fixed point. In other words, their non-simplicial locus is either empty or isolated depending on whether the cone is simplicial or not, respectively. We mention that the behavior of $\QQ_{X}^{H'}$ is similar to the situation of isolated singularities (for example, see \cite{Lopez-Sabbah:Hodge-Lyubeznik} for a related result).

    The following theorem says that these toric varieties achieve the upper bound $\lcdef(X) \leq \max\{0,n-3\}$ of \cite{Dao-Takagi} as long as they are non-simplicial.
	
	\begin{theo} \label{theo:cone-over-simplicial-polytope}
        Let $\sigma$ be a full-dimensional cone of dimension $n \geq 3$ and let $X$ denote the associated affine toric variety. Assume that $\sigma$ is a cone over a simplicial polytope, but is not simplicial i.e. $X$ is simplicial outside the torus fixed point, but is not simplicial. Then $\lcdef(X) = n-3$.
	\end{theo}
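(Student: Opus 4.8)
The bound $\lcdef(X)\le n-3$ is the Cohen--Macaulay estimate of \cite{Dao-Takagi}, so the content is the reverse inequality, and I may assume $n\ge 4$ (for $n=3$ one has $\lcdef(X)\le 0$ and the statement is automatic). Since $\lcdef(X)$ is the cohomological amplitude of $\DD\QQ_X^{H'}$ and $\DD$ is exact for the natural $t$-structure on $D^b\MHM$ (and $\QQ_X^{H'}$ has $\cH^0\supseteq\IC_X^H\ne 0$ and no positive cohomology), we have $\lcdef(X)=\max\{l:\cH^{-l}\QQ_X^{H'}\ne 0\}$, so it is enough to show $\cH^{-(n-3)}\QQ_X^{H'}\ne 0$. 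By Theorem~\ref{theo:MHM-structure} this module is a (possibly trivial) power of $\IC^H_{x_0}(-1)$, where $x_0$ is the torus-fixed point; the plan is to compute it by resolving $x_0$ and feeding the decomposition theorem into the long exact sequence attached to the decomposition $\{x_0\}\hookrightarrow X\hookleftarrow X\setminus\{x_0\}$.

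First I would build the resolution. Choose a lattice point $v_0$ in the relative interior of $\sigma$ and let $p\colon\widetilde X\to X$ be the projective toric birational morphism associated to the star subdivision of $\sigma$ at $v_0$. It is an isomorphism over $X\setminus\{x_0\}$; the variety $\widetilde X$ is $\QQ$-smooth, since every cone of the subdivided fan is simplicial (each proper face of $\sigma$ is a cone over a face of the simplicial polytope $Q$, hence simplicial); and $E:=p^{-1}(x_0)$ is a $\QQ$-smooth complete toric variety of dimension $n-1$ whose ray generators are exactly those of $\sigma$. As $E$ is contracted to a point by the projective morphism $p$, it is projective, so $H^{\mathrm{odd}}(E,\QQ)=0$, $H^\bullet(E,\QQ)$ is of Hodge--Tate type, and Hard Lefschetz holds on $E$. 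Finally $b_2(E)=\#\{\text{ray generators of }\sigma\}-(n-1)=\#\{\text{vertices of }Q\}-(n-1)$, so the hypothesis that $\sigma$ is not simplicial (equivalently, that $Q$ is not a simplex) gives $b_2(E)\ge 2$.

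Since $\widetilde X$ is $\QQ$-smooth, $\QQ_{\widetilde X}^{H'}=\IC^H_{\widetilde X}$, and comparing the triangles $j_!\QQ_U^{H'}\to(-)\to i_{x_0*}i_{x_0}^*(-)$ for $(-)=\QQ_X^{H'}$ and $(-)=Rp_*\QQ_{\widetilde X}^{H'}$ (with $U=X\setminus\{x_0\}$, using $i_{x_0}^*Rp_*\QQ_{\widetilde X}^{H'}\simeq R\Gamma(E,\QQ)^H[n]$ by proper base change) produces a distinguished triangle in $D^b\MHM(X)$
\[ \QQ_X^{H'}\longrightarrow Rp_*\QQ_{\widetilde X}^{H'}\longrightarrow i_{x_0*}\bigl(\tau_{\ge 1}R\Gamma(E,\QQ)^H\bigr)[n]\xrightarrow{\ +1\ }, \]
whose third term is $\bigoplus_{i\ge 1}H^{2i}(E,\QQ)^H[n-2i]$ (every object of $D^b(\mathrm{MHS})$ being formal). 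By the decomposition theorem, $Rp_*\QQ_{\widetilde X}^{H'}\simeq\IC^H_X\oplus\bigoplus_l i_{x_0*}W_l[-l]$ for polarizable Hodge structures $W_l$; to compute $W_l$ for $l\le 0$, apply $i_{x_0}^!$: one gets $i_{x_0}^!Rp_*\QQ_{\widetilde X}^{H'}\simeq R\Gamma(E,\QQ)^H[n-2](-1)$ (proper base change plus the Gysin isomorphism for the divisor $E$ in the $\QQ$-homology manifold $\widetilde X$), while $i_{x_0}^!\IC^H_X$ has cohomology only in degrees $\ge 1$ by the cosupport condition; hence $W_l\simeq H^{\,n-2+l}(E,\QQ)(-1)$ for $l\le 0$. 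In particular $W_{-(n-2)}\simeq H^0(E,\QQ)(-1)=\QQ(-1)$ is one-dimensional, and $W_{-(n-3)}\simeq H^1(E,\QQ)(-1)=0$.

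Then I would take the long exact sequence of the $\cH^\bullet$ of the displayed triangle around degree $-(n-3)$. The third term has $\cH^{-(n-2)}=i_{x_0*}H^2(E,\QQ)$ and $\cH^{-(n-3)}=i_{x_0*}H^3(E,\QQ)=0$; also $\cH^{-(n-3)}Rp_*\QQ_{\widetilde X}^{H'}=i_{x_0*}W_{-(n-3)}=0$, and $\cH^{-(n-2)}\QQ_X^{H'}=0$ by the bound $\lcdef(X)\le n-3$. So the sequence collapses to a short exact sequence
\[ 0\longrightarrow i_{x_0*}W_{-(n-2)}\overset{\psi}{\longrightarrow} i_{x_0*}H^2(E,\QQ)\longrightarrow \cH^{-(n-3)}\QQ_X^{H'}\longrightarrow 0, \]
which identifies $\cH^{-(n-3)}\QQ_X^{H'}\simeq H^2(E,\QQ)/\psi(W_{-(n-2)})$; since $W_{-(n-2)}$ is one-dimensional this quotient has dimension $b_2(E)-1\ge 1$, hence is nonzero. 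Therefore $\lcdef(X)\ge n-3$, and equality holds; as a byproduct $a_\sigma^{n-3,1}=b_2(E)-1=\#\{\text{vertices of }Q\}-n$. The hard part is setting up the distinguished triangle as an honest identity of mixed Hodge modules and keeping the weights and shifts straight in the decomposition theorem; once the graded pieces $W_{-(n-2)}$ and $W_{-(n-3)}$ are pinned down, together with the a priori vanishing $\cH^{-(n-2)}\QQ_X^{H'}=0$, the non-vanishing is forced.
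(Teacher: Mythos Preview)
Your argument is correct. The paper proves Theorems \ref{theo:cone-over-simplicial-polytope} and \ref{theo:cone-over-simplicial} together via a different mechanism: it applies the spectral sequence $E_2^{p,q}=\cH^p(i_{x_\sigma}^*\cH^q\QQ_X^{H'})\Rightarrow\cH^{p+q}i_{x_\sigma}^*\QQ_X^{H'}$ and, using that every $\cH^q\QQ_X^{H'}$ for $q<0$ is supported at $x_\sigma$, obtains the identification $\cH^{-l}\QQ_X^{H'}\simeq\cH^{-l-1}i_{x_\sigma}^*\IC_X^H$ for $1\le l\le n-2$; the stalks of $\IC_X^H$ are then read off from \cite{KV:ICTV} in terms of the $h$-vector, giving $\cH^{-(n-3)}\QQ_X^{H'}\simeq\QQ(-1)^{h_1-h_0}$ with $h_1-h_0=f_1-n>0$.

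Your route bypasses the stalk computation of $\IC_X^H$ entirely: you feed the decomposition theorem for the star-subdivision $p\colon\widetilde X\to X$ directly into the triangle $\QQ_X^{H'}\to Rp_*\QQ_{\widetilde X}^{H'}\to i_{x_0*}\tau_{\ge 1}R\Gamma(E,\QQ)[n]$, and pin down the relevant summands $W_{-(n-2)},W_{-(n-3)}$ via $i_{x_0}^!$ and the Gysin identification (valid because both $\widetilde X$ and $E$ are $\QQ$-homology manifolds). This is more economical for the bare non-vanishing: you only need $H^0,H^1,H^2,H^3$ of $E$ and the elementary count $b_2(E)=f_1-(n-1)$, and you recover $a_\sigma^{n-3,1}=b_2(E)-1=f_1-n$ in agreement with the paper. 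The paper's approach, in exchange for invoking the $\IC$ stalk formula, yields all the $a_\sigma^{l,j}$ of Theorem \ref{theo:cone-over-simplicial} at once. Two small remarks: the formality of the third term is not needed for your argument (you only use its $\cH^{-(n-2)}$ and $\cH^{-(n-3)}$), and the reference to Theorem \ref{theo:MHM-structure} at the outset is likewise inessential, since the short exact sequence already forces $\cH^{-(n-3)}\QQ_X^{H'}\ne 0$ regardless of its precise shape.
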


    Since $X$ has quotient singularities outside the torus fixed point, the natural morphism $\QQ_{X}^{H'} \to \IC_{X}^{H}$ is an isomorphism away from the torus fixed point. Hence, the summands other than $\IC_{X}^{H}$ appearing in Theorem \ref{theo:MHM-structure} are all supported at the torus fixed point. In this case, it is possible to fully describe these summands.

    \begin{theo} \label{theo:cone-over-simplicial}
    Let $\sigma$ be a full-dimensional cone of dimension $n$ and let $X$ denote the associated affine toric variety. Assume that $\sigma$ is a cone over a simplicial polytope. Let $f_{i}$ be the number of $i$-dimensional faces in $\sigma$. Let
    $$ h_{j} = \sum_{l=j}^{n-1} (-1)^{l-j} {l \choose j} f_{n-1-l}.$$
    Then we have the following description for $a_{\sigma}^{l, j}$:
    $$ a_{\sigma}^{l, j} = \begin{cases}
        h_{j} - h_{j-1} & 1 \leq j < n/2 \text{ and } l = n-2j-1 \\
        0 & \text{otherwise.}
    \end{cases}$$
    \end{theo}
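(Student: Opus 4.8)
The plan is to reduce the computation of the $a_{\sigma}^{l,j}$ to the combinatorics of the link of the simplicial polytope, using that $X$ is simplicial away from the torus fixed point $o$. First I would use the observation noted in the excerpt: the natural map $\QQ_X^{H'} \to \IC_X^H$ is an isomorphism on $X \setminus \{o\}$, since $X$ has quotient singularities there. Hence all the ``extra'' summands $\IC_{S_\lambda}^H(-j)^{a_\lambda^{l,j}}$ in Theorem~\ref{theo:MHM-structure} with $\lambda \neq \sigma$ vanish — the only face contributing is $\lambda = \sigma$ itself, which corresponds to $S_\sigma = \{o\}$. So the whole correction term to $\IC_X^H$ is a complex of Hodge structures supported at $o$, and by Theorem~\ref{theo:MHM-structure}(3) it is governed precisely by the integers $a_\sigma^{l,j}$. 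The task becomes: identify the cone $C(\DD\QQ_X^{H'} \text{ vs. } \IC_X^H)$, i.e. understand $\cH^{-l}\QQ_X^{H'}$ restricted to (equivalently, pushed forward from) $\{o\}$.

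Next I would compute these local contributions via the description of the Du Bois complex and its dual. Since $X$ is a cone over the projective simplicial toric variety $X_P$ associated to the polytope $P$ (the ``link''), one can relate $\QQ_X^{H'}$ — or rather its stalk cohomology at $o$, which computes reduced local cohomology of $X$ at $o$ — to the cohomology of the punctured neighborhood, hence to $H^\bullet(X_P, \QQ)$ together with its Hodge structure. Because $X_P$ is a simplicial projective toric variety, $H^\bullet(X_P)$ is pure of Hodge–Tate type and its Betti numbers are exactly the $h$-vector of $P$; this is where the $h_j$ in the statement enter. Concretely, the combinatorial quantity $h_j = \sum_{l=j}^{n-1}(-1)^{l-j}\binom{l}{j} f_{n-1-l}$ is the standard $h$-vector of the simplicial $(n-1)$-polytope $P$ in terms of its $f$-vector, so $\dim H^{2j}(X_P) = h_j$ and $\dim H^{2j-1}(X_P) = 0$. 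I would then run the cone/punctured-neighborhood long exact sequence: reduced local cohomology $\cH^{q}_{\{o\}}(\cO_X)_o$ (which is what the $\cH^{-l}\QQ_X^{H'}$ at $o$ repackage, up to the shift $c = \codim$) is built from $H^{q-1}$ of the punctured cone, and the punctured cone is a $\GG_m$-bundle-like (line bundle minus zero section) over $X_P$, giving a Gysin sequence. Tracking the Hodge/weight grading through this sequence, together with the weight bounds and allowed Tate twists from Theorem~\ref{theo:MHM-structure}(1) and (3) — which force each graded piece into a single $(k,l)$ slot — pins down $a_\sigma^{l,j}$ as the ``primitive part'': the jump $h_j - h_{j-1}$, occurring in the single bidegree $l = n - 2j - 1$, $1 \le j < n/2$, and zero otherwise. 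The hard part of the bookkeeping is matching the cohomological degree $q$ of local cohomology with the Hodge-module degree $-l$ and the weight $n-k$, i.e. being careful that the shift by the codimension and the duality $\DD$ land the Gysin-sequence contribution in exactly the slot $(l,j) = (n-2j-1, j)$; Theorem~\ref{theo:MHM-structure}'s constraints do most of this work, leaving only the rank count $h_j - h_{j-1}$ to verify.

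I expect the main obstacle to be the Hodge-theoretic comparison between $\cH^\bullet\QQ_X^{H'}$ at the fixed point and the cohomology of the associated projective toric variety $X_P$ — specifically, making precise that the only $\GG_m$-weight-zero part survives and that the hard Lefschetz / primitive decomposition on $H^\bullet(X_P)$ translates into the ``$h_j - h_{j-1}$'' pattern with the asserted vanishing for $j \ge n/2$. This is essentially where Stanley's argument that the $h$-vector of a simplicial polytope is unimodal gets reflected, and one must ensure the $\IC$ vs.\ $\QQ^{H'}$ discrepancy sees exactly the primitive cohomology $H^{2j}_{\mathrm{prim}}(X_P)$ of dimension $h_j - h_{j-1}$. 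Once that comparison is set up cleanly, the formula for $a_\sigma^{l,j}$ follows by comparing dimensions degree by degree, using Theorem~\ref{theo:MHM-structure}(4) to conclude that these numbers depend only on $\sigma$.
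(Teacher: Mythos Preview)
Your opening observation is correct and matches the paper: since $X$ has quotient singularities away from the torus fixed point $x_\sigma$, all $a_\lambda^{l,j}$ vanish for proper faces $\lambda \subsetneq \sigma$, and the problem reduces to identifying the Hodge structures $\cH^{-l}\QQ_X^{H'}$ for $l>0$ (supported at $x_\sigma$) together with the kernel $\cK$ of $\cH^0\QQ_X^{H'} \twoheadrightarrow \IC_X^H$. But the mechanism you propose after that has a gap. You write that the ``stalk cohomology [of $\QQ_X^{H'}$] at $o$ \ldots\ computes reduced local cohomology of $X$ at $o$''; in fact $i_{x_\sigma}^*\QQ_X^{H'}$ is just $\QQ_{x_\sigma}^H[n]$, which carries no information. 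Local cohomology is packaged by $i^!$ and by $\DD\QQ_X^{H'}$, not by $i^*\QQ_X^{H'}$. Likewise, $X \setminus \{x_\sigma\}$ is not in general a $\GG_m$-bundle over a canonical projective simplicial $X_P$: one must \emph{choose} an interior ray to get a contraction $\pi\colon\widetilde X\to X$ with exceptional divisor $E$, and even then $\widetilde X$ is typically not a line bundle over $E$. So the Gysin/punctured-cone picture, as stated, does not give you direct access to the perverse cohomologies $\cH^{-l}\QQ_X^{H'}$.

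The paper instead turns the \emph{triviality} of $i_{x_\sigma}^*\QQ_X^{H'}$ into the engine of the argument. It runs the spectral sequence
\[
E_2^{p,q}=\cH^p\bigl(i_{x_\sigma}^*\cH^q\QQ_X^{H'}\bigr)\ \Longrightarrow\ \cH^{p+q}\,i_{x_\sigma}^*\QQ_X^{H'},
\]
whose abutment is $\QQ_{x_\sigma}^H$ in total degree $-n$ and zero elsewhere. Since $\cH^q\QQ_X^{H'}$ is already supported at $x_\sigma$ for $q<0$, the only nontrivial row is $q=0$; the short exact sequence $0\to\cK\to\cH^0\QQ_X^{H'}\to\IC_X^H\to 0$ then identifies $E_2^{p,0}$ with $\cH^p i_{x_\sigma}^*\IC_X^H$ for $p\le -2$ and gives $\cK\simeq\cH^{-1}i_{x_\sigma}^*\IC_X^H$. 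The degenerate abutment forces the differentials $d_{-p}\colon E_{-p}^{p,0}\to E_{-p}^{0,p+1}$ to be isomorphisms, so that
\[
\cH^{-l}\QQ_X^{H'}\ \simeq\ \cH^{-l-1}\,i_{x_\sigma}^*\IC_X^H \qquad (1\le l\le n-2).
\]
The formula $a_\sigma^{l,j}=h_j-h_{j-1}$ for $l=n-2j-1$ then drops out of the known description of the $\IC$ stalk (\cite{KV:ICTV}, \cite{Fieseler-ICprojtoric}), which already encodes the primitive decomposition you are aiming for. Your intuition that $h_j-h_{j-1}$ is the ``primitive part of $H^{2j}(X_P)$'' is morally what underlies that stalk computation, but it does not by itself yield the identification of $\cH^{-l}\QQ_X^{H'}$ with a shifted $\IC$ stalk; that is precisely the step supplied by the spectral-sequence argument above and is missing from your outline.
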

    We remark that these $a_{\sigma}^{n-2j-1,j}$ are precisely the coefficients $g_j$ of the $g$-polynomial (see \cite[\S 2]{Stanley:Intersection-cohomology-toric-varieties}) of a suitable hyperplane section of $\sigma$, which is a simplicial polytope. Thus, by \cite{Braden:Remarks-IC-fans}*{Theorem 1.4}, we have the following consequence: If $a_{\sigma}^{n-2j_0-1,j_0} = 0$ for some $j_0$, then $a_{\sigma}^{n-2j-1,j} = 0$ for all $j \geq j_0$.

    In this case, we are also able to fully describe the cohomologies of the Grothendieck dual of the Du Bois complexes, which are the sheaves $\SheafExt_{\cO_{X}}^{k}(\duBois_{X}^{p}, \omega_{X})$. Note that if $k \neq 0$, these sheaves are supported at the torus fixed point, since $X$ has quotient singularities outside the torus fixed point (see \cite{Mustata-Popa22:Hodge-filtration-local-cohomology}*{Corollary 4.29}). 

    \begin{prop} \label{prop:Gro-dual-isolated-non-simplicial}
        Let $\sigma$ be a full-dimensional cone of dimension $n$ and let $X$ denote the associated affine toric variety. Assume that $\sigma$ is a cone over a simplicial polytope. Let $f_{i}$ be the number of $i$-dimensional faces of $\sigma$, and let $h_{j}$ as in Theorem \ref{theo:cone-over-simplicial}.
        For $l \leq n/2$ and $j > 0$, we have
        $$ \dim_{\mathbb{C}} \SheafExt_{\cO_{X}}^{j}(\duBois_{X}^{n-l}, \omega_{X}) = \begin{cases}
            h_{l} - h_{l-1} & \text{if } j= l \\
            0 & \text{otherwise}.
        \end{cases} $$
        For $l \geq n/2$ and $j > 0$, we have
        $$ \dim_{\mathbb{C}} \SheafExt_{\cO_{X}}^{j}(\duBois_{X}^{n-l}, \omega_{X}) = \begin{cases}
        h_{l-1} - h_{l} & \text{if } j = l-1   \\
        0 & \text{otherwise}.
        \end{cases}
         $$
         In particular, if $n$ is even, then $\SheafExt_{\cO_{X}}^{j}(\duBois_{X}^{n/2}, \omega_{X}) = 0$ for all $j>0$; hence $\duBois_{X}^{n/2}$ has maximal depth.
    \end{prop}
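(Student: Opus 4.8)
\emph{Proof strategy for Proposition~\ref{prop:Gro-dual-isolated-non-simplicial}.}
The plan is to read the sheaves $\SheafExt^{j}_{\cO_{X}}(\duBois_{X}^{n-l},\omega_{X})$ off the mixed Hodge module $\DD\QQ_{X}^{H'}$, whose structure is pinned down by Theorems~\ref{theo:MHM-structure} and \ref{theo:cone-over-simplicial}. Since $\sigma$ is a cone over a simplicial polytope, $X=U_{\sigma}$ has quotient singularities away from the torus fixed point $o$, so by \cite{Mustata-Popa22:Hodge-filtration-local-cohomology}*{Corollary 4.29} every $\SheafExt^{j}_{\cO_{X}}(\duBois_{X}^{n-l},\omega_{X})$ with $j>0$ is a skyscraper at $o$, and it suffices to compute lengths. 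I would use the identification (from Saito's compatibility of the de Rham functor with Grothendieck duality, see \S\ref{section:DB-complex})
\[
 \bfR\SheafHom_{\cO_{X}}(\duBois_{X}^{p},\omega_{X})\;\simeq\;\gr^{F}_{p}\DR_{X}(\DD\QQ_{X}^{H'})[-p]\;\bigl(\simeq\Ish_{X}^{\,n-p}\bigr),
\]
together with the hypercohomology spectral sequence of the Hodge filtration: as $\gr^{F}_{p}\DR_{X}\colon\MHM(X)\to D^{b}_{\coh}(X)$ is exact, $\SheafExt^{k}_{\cO_{X}}(\duBois_{X}^{p},\omega_{X})$ is assembled from the graded de Rham pieces of the cohomology modules $\cH^{l'}\DD\QQ_{X}^{H'}=\DD\,\cH^{-l'}\QQ_{X}^{H'}$, hence — by exactness once more — from those of their weight-graded summands.

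By Theorem~\ref{theo:MHM-structure}, and because $X$ is simplicial (so $\QQ_{X}^{H'}\simeq\IC_{X}^{H}$) outside $o$, the only face occurring in the decomposition of Theorem~\ref{theo:MHM-structure}(3) for this $X$ is $\lambda=\sigma$, i.e.\ $S_{\lambda}=\{o\}$: every weight-graded summand of $\cH^{-l'}\QQ_{X}^{H'}$ is either $\IC_{X}^{H}$ (only for $l'=0$, the top weight $n$) or a Tate twist $\IC_{\{o\}}^{H}(-j)=(i_{o,*}\QQ^{H}_{\mathrm{pt}})(-j)$ with multiplicity $a_{\sigma}^{l',j}$, and by Theorem~\ref{theo:cone-over-simplicial} the latter is nonzero only when $l'=n-2j-1$ and $1\le j<n/2$, where it equals $h_{j}-h_{j-1}$. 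A direct computation of $\gr^{F}_{\bullet}\DR_{X}$ of a twisted skyscraper gives $\gr^{F}_{p}\DR_{X}(\IC_{\{o\}}^{H}(-j))\simeq i_{o,*}\CC$ in cohomological degree $0$ if $p=j$, and $0$ otherwise. Dualizing and running the spectral sequence (which degenerates because, for each fixed $p$, at most one $E_{1}$-term is nonzero), each such summand contributes exactly $\CC^{\,h_{j}-h_{j-1}}$ to $\SheafExt^{\,n-j-1}_{\cO_{X}}(\duBois_{X}^{\,j},\omega_{X})$ for $1\le j<n/2$ and nothing to any other $\SheafExt^{>0}$. Substituting $l=n-j$ and using the Dehn--Sommerville relations $h_{i}=h_{n-1-i}$ (available since, by the remark after Theorem~\ref{theo:cone-over-simplicial}, the $h_{i}$ are the $h$-numbers of a simplicial polytope) turns this into the case $l\ge n/2$ of the Proposition.

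The remaining case $l\le n/2$, i.e.\ the sheaves $\SheafExt^{l}_{\cO_{X}}(\duBois_{X}^{n-l},\omega_{X})$, must come from the top summand $\IC_{X}^{H}$. Writing $\underline{\Omega}^{q}_{X,\mathrm{IC}}:=\gr^{F}_{-q}\DR_{X}(\IC_{X}^{H})[q-n]$ for the intersection-cohomology analogue of the Du Bois complex, Saito duality gives $\gr^{F}_{p}\DR_{X}(\DD\IC_{X}^{H})\simeq\bfR\SheafHom_{\cO_{X}}(\underline{\Omega}^{n-p}_{X,\mathrm{IC}},\omega_{X})[p]$, so this contribution to $\SheafExt^{k}_{\cO_{X}}(\duBois_{X}^{p},\omega_{X})$ at $o$ is $\SheafExt^{k}_{\cO_{X}}(\underline{\Omega}^{n-p}_{X,\mathrm{IC}},\omega_{X})$. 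Now $\underline{\Omega}^{q}_{X,\mathrm{IC}}$ coincides with $\Omega_{X}^{[q]}$ away from $o$, and I would compute it near $o$ using that $X$ is the affine cone over a projective simplicial toric variety $Y$ whose Hodge--Du Bois diamond is concentrated on the diagonal with $\underline{h}^{\,i,i}(Y)=h_{i}$: the cone formula expresses $\underline{\Omega}^{\bullet}_{X,\mathrm{IC}}$ near $o$ through the intersection-cohomology truncation of $\bigoplus_{k}\HH^{\bullet}(Y,\duBois_{Y}^{\bullet}\otimes L^{k})$, which collapses to a single line in each relevant bidegree by the diagonal vanishing. Feeding this in and applying Grothendieck--Serre duality for the cone should give $\dim_{\CC}\SheafExt^{l}_{\cO_{X}}(\duBois_{X}^{n-l},\omega_{X})=h_{l}-h_{l-1}$ for $l\le n/2$ and vanishing for $j\ne l$. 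For $n$ even and $l=n/2$ both formulas read $h_{n/2}-h_{n/2-1}=0$ by Dehn--Sommerville; hence $\SheafExt^{j}_{\cO_{X}}(\duBois_{X}^{n/2},\omega_{X})=0$ for all $j>0$, and since $\SheafExt^{0}_{\cO_{X}}(\duBois_{X}^{n/2},\omega_{X})=\SheafHom_{\cO_{X}}(\Omega_{X}^{[n/2]},\omega_{X})=\Omega_{X}^{[n/2]}\neq 0$ while $\SheafExt^{<0}=0$, we get $\depth\duBois_{X}^{n/2}=n$, i.e.\ maximal depth.

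The hard part is the third paragraph: extracting $\underline{\Omega}^{\bullet}_{X,\mathrm{IC}}$ near $o$ and the cone-duality bookkeeping. (An alternative I would try is to deduce the $l\le n/2$ case directly from the $l\ge n/2$ case by a Grothendieck-duality argument on $X$, exploiting that the $\IC_{X}^{H}$-part and the skyscraper corrections in $\QQ_{X}^{H'}$ are Poincar\'e-dual — this is precisely why the two ranges end up with equal dimensions.) The remaining issues are routine: checking the Hodge-filtration spectral sequences degenerate (sparsity of $E_{1}$, as above), and fixing the Tate-twist and cohomological-shift conventions once on the smooth locus (via $\gr^{F}_{-p}\DR_{X}(\QQ_{X}^{H'})\simeq\duBois_{X}^{p}[n-p]$) and propagating them.
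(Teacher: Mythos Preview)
Your approach is correct in outline but follows a genuinely different route from the paper's. The paper does \emph{not} go through the MHM decomposition of $\QQ_{X}^{H'}$; instead it invokes Theorem~\ref{theo:Lefschetz}: inserting a ray in the interior of $\sigma$ produces a projective simplicial toric variety $E$ of dimension $n-1$ with ample class $L$, and a long exact sequence that collapses (since $\dim\HH^{i}(E,\Ish_{E}^{j})=\delta_{ij}h_{i}$) to
\[
0 \to H^{l-1}(\Ish_{\sigma}^{l}) \to \HH^{l-1}(E,\Ish_{E}^{l-1}) \xrightarrow{c_{1}(L)\dual} \HH^{l}(E,\Ish_{E}^{l}) \to H^{l}(\Ish_{\sigma}^{l}) \to 0,
\]
together with $H^{i}(\Ish_{\sigma}^{l})=0$ for $i\ne l-1,l$. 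Hard Lefschetz on $(E,L)$ then decides whether $c_{1}(L)\dual$ is injective ($2(l-1)<n-1$) or surjective ($2l>n-1$), giving both ranges at once. This is shorter and does not rely on Theorem~\ref{theo:cone-over-simplicial}.

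Two comments on your argument. First, your degeneration claim (``at most one $E_{1}$-term is nonzero'') is only correct after passing to $M$-degree $0$: as sheaves, the summand $\IC_{X}^{H}(n)$ in $\cH^{0}\DD\QQ_{X}^{H'}$ contributes nontrivially to $\gr_{p}\DR$ in several cohomological degrees. Since the $\SheafExt^{j}$ for $j>0$ are skyscrapers at $x_{\sigma}$, working at degree $0$ is exactly what is needed, but this restriction should be made explicit before asserting degeneration. Second, your ``hard part'' (the $\IC_{X}^{H}$ contribution for $l\le n/2$) is easier than you suggest: there is no need for a cone formula or Grothendieck--Serre duality on $Y$. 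The degree-$0$ graded de Rham of $\IC_{X}^{H}$ is already computed in \S\ref{sec:ICTV-summary} via $\dR_{\sigma}(K,L)=L^{-n}H_{\sigma}(LK^{-1/2})$, and the proof of Theorem~\ref{theo:cone-over-simplicial} gives $H_{\sigma}(q)=\sum_{j<n/2}(h_{j}-h_{j-1})q^{2j}$. Plugging in yields $\dim\bigl(\cH^{n-2p}\gr_{p-n}\DR\IC_{X}^{H}\bigr)_{0}=h_{n-p}-h_{n-p-1}$ for $p>n/2$, which (with $l=n-p$) is the desired $h_{l}-h_{l-1}$, and $0$ in all other slots. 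Your duality alternative, by contrast, is not a real shortcut: self-duality of $\IC_{X}^{H}$ and of the skyscrapers does not by itself relate $\SheafExt^{\bullet}(\duBois_{X}^{p},\omega_{X})$ across the line $p=n/2$.
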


    Just as before, these dimensions are the coefficients $g_j$ (or their negatives) of the $g$-polynomial of a suitable hyperplane section of $\sigma$, which is a simplicial polytope. Thus, again by \cite{Braden:Remarks-IC-fans}*{Theorem 1.4}, we have the following consequence: If $\duBois_{X}^{j_0}$ has maximal depth for some $j_0 < n/2$, then $\duBois_{X}^{j}$ has maximal depth for all $j_0 \leq j \leq n/2$. Similarly, if $\duBois_{X}^{j_0}$ has maximal depth for some $j_0 > n/2$, then $\duBois_{X}^{j}$ has maximal depth for all $j_0 \geq j \geq n/2$.

    \subsection{Cones over simple polytopes}

    The second case is that of an affine toric variety associated to \textit{a cone over a simple polytope} (see \S\ref{section:toric-var-basic} for the precise definition). In fact, the affine toric varieties associated to cones over simple polytopes are precisely those for which all the torus-invariant divisors are simplicial.
    These examples provide a wide class of varieties with local cohomological defect zero. We mention that despite having $\lcdef = 0$, these varieties are very far from being rational homology manifolds (i.e, varieties satisfying $\QQ_{X}^{H'} \simeq \IC_{X}^{H}$).

    \begin{theo} \label{theo:lcdef-of-simple}
        Let $X$ be the affine toric variety associated to a cone over a simple polytope, i.e. $X$ is such that all its torus-invariant divisors are simplicial. Then $\lcdef(X) = 0$.
    \end{theo}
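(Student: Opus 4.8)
The assertion is that $\lcdef(X) = 0$. By the discussion in \S\ref{section:hodge-modules}, $\lcdef(X)$ equals the cohomological amplitude of $\DD\QQ^{H'}_X$, so by duality it suffices to show that $\QQ^{H'}_X$ is concentrated in cohomological degree $0$; since the first assertion of Theorem \ref{theo:MHM-structure} already gives $\cH^{j}\QQ^{H'}_X = 0$ for $j > 0$, the task is to prove $\cH^{-l}\QQ^{H'}_X = 0$ for all $l \ge 1$. Write $X = X_\sigma$ with $\sigma$ a cone over a simple polytope $P$, set $n = \dim\sigma$, and let $x_0 \in X_\sigma$ be the torus-fixed point. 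I would proceed by induction on $n$, the cases $n \le 3$ being immediate from Theorem \ref{theo:MHM-structure}.

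The first step is to localize the vanishing at $x_0$. The open subvariety $X_\sigma \setminus \{x_0\}$ is covered by the affine charts $X_\tau$ with $\tau$ ranging over the facets of $\sigma$; each such $\tau$ is not of full dimension, so $X_\tau \cong X_{\tau'} \times \CC^*$, where $\tau'$ denotes $\tau$ regarded as a full-dimensional cone in its linear span, and $\tau'$ is the cone over a facet $F$ of $P$. A facet of a simple polytope is again simple and $\dim \tau' = n-1$, so the inductive hypothesis gives $\lcdef(X_{\tau'}) = 0$; hence $\QQ^{H'}_{X_\tau} \simeq \QQ^{H'}_{X_{\tau'}} \boxtimes \QQ^{H'}_{\CC^*}$ is a single mixed Hodge module and $\cH^{-l}\QQ^{H'}_{X_\sigma}$ restricts to $0$ on each $X_\tau$ for $l \ge 1$. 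Therefore, for $l \ge 1$ we may write $\cH^{-l}\QQ^{H'}_{X_\sigma} \simeq (i_{x_0})_* V_l$ for a mixed Hodge structure $V_l$, and by parts (3) and (4) of Theorem \ref{theo:MHM-structure} we have $V_l \simeq \bigoplus_{j \ge 1} \QQ(-j)^{a_{\sigma}^{l,j}}$ with the numbers $a_{\sigma}^{l,j}$ depending only on $\sigma$. The weight estimate in part (1) of Theorem \ref{theo:MHM-structure} already forces $a_{\sigma}^{l,j} = 0$ unless $l \le n-2j-1$; what remains is to show $a_{\sigma}^{l,j} = 0$ whenever $l \ge 1$.

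For this final step I would compute the modules $V_l$ explicitly, in parallel with Theorem \ref{theo:cone-over-simplicial}, using the combinatorial description of the Ishida complexes from \S\ref{section:Ishida-complex}: dualizing, the stalk at $x_0$ of $\cH^{l}\DD\QQ^{H'}_{X_\sigma}$ is read off from the cohomology of a torus-weight component of $\bfR\SheafHom_{\cO_{X_\sigma}}(\duBois^p_{X_\sigma}, \omega_{X_\sigma}) \simeq \Ish^{n-p}_{X_\sigma}$, which is an explicit finite complex of $\QQ$-vector spaces indexed by the faces of $P$. One then proves that, when $P$ is simple --- equivalently, when every torus-invariant divisor of $X_\sigma$ is simplicial --- this complex is acyclic outside the single degree that contributes to $\cH^0$, so that $V_l = 0$ for all $l \ge 1$. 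This combinatorial vanishing is the main obstacle: just as with the surjectivity statement in Theorem \ref{theo:depth-of-Du-Bois-general}, I do not expect a purely elementary combinatorial proof to be available in general, and I anticipate that one must feed in the Hodge-module constraints of Theorem \ref{theo:MHM-structure} --- the weight ranges, the Tate-twist pattern, and the already-established structure of $\QQ^{H'}$ along the simplicial strata --- in order to pin down the $V_l$ and conclude their vanishing, much as Stanley used Hodge theory on toric varieties to extract combinatorial consequences.
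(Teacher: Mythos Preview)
Your reduction is correct: by induction and restriction to the charts $X_\tau$, the higher cohomologies $\cH^{-l}\QQ^{H'}_X$ for $l\ge 1$ are supported at the torus-fixed point, and the problem becomes the acyclicity (above degree~$1$) of the degree-zero pieces $\Ish_\sigma^{m}$ of the Ishida complexes. However, your final paragraph contains a genuine gap: you do not prove this acyclicity, and your expectation about how to prove it is the opposite of what actually happens.

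You anticipate, by analogy with Theorem~\ref{theo:depth-of-Du-Bois-general}, that no elementary combinatorial argument is available and that Hodge-module constraints must be fed back in. But the analogy is misleading. In Theorem~\ref{theo:depth-of-Du-Bois-general} the relevant surjectivity indeed resists a direct combinatorial attack (as the paper remarks), whereas here the simpleness hypothesis is precisely what makes a direct argument work. The paper proves the statement as the special case $c=1$ of Theorem~\ref{theo:simple-upperbound-depth}, and the key input is Lemma~\ref{lemm:exactness-of-simple-dim-c}: for a cone $\sigma$ simple in dimension~$c$, one has $H^i(\Ish_\sigma^l)=0$ for all $i>c$. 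The proof of that lemma is purely combinatorial: simpleness guarantees that for $\mu\subset\tau$ with $d_\mu\ge c$ the interval $[\mu,\tau]$ is a Boolean lattice, which lets one split each $V_\mu^l$ into one-dimensional pieces $V_{\mu,\tau}$ indexed by $\tau\in\cP_l^{\supset\mu}$, compatible with the maps $\varphi_{\mu,\lambda}$; a lexicographic shelling order on chains in $\cP$ then gives an explicit algorithm for writing any cycle above degree~$c$ as a boundary. Once this is in hand, Lemma~\ref{lemm:grade-parts-of-Ishida-complex} propagates the vanishing to every graded piece of $\Ish_X^l$, and the Musta\c{t}\u{a}--Popa criterion (Proposition~\ref{prop:lcd-intermsof-Ishida}) gives $\lcdef(X)\le 0$ immediately --- there is no need to go through the modules $V_l$ at all.

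A secondary remark: your claim that ``the stalk at $x_0$ of $\cH^{l}\DD\QQ^{H'}_{X_\sigma}$ is read off from the cohomology of a torus-weight component of $\Ish^{n-p}_{X_\sigma}$'' is not quite right as stated. The Ishida complex computes $\gr_k\DR\,\DD\QQ^{H'}_X$ (up to shift), not the individual perverse cohomologies $\cH^l\DD\QQ^{H'}_X$; passing between the two involves the spectral sequence $E_2^{p,q}=\cH^p\gr_k\DR\,\cH^q\DD\QQ^{H'}_X$. This can be unwound in your situation since the $q\ge 1$ terms are skyscrapers of Hodge--Tate type, but the paper's route via Proposition~\ref{prop:lcd-intermsof-Ishida} bypasses this entirely.
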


    Theorem \ref{theo:lcdef-of-simple} is a special case of the following general theorem about toric varieties whose codimension $c$ torus-invariant closed subvarieties are all simplicial.
    
	\begin{theo} \label{theo:simple-upperbound-depth}
		Let $X$ be a toric variety such that all of its codimension $c$ torus-invariant closed subvarieties are simplicial. Then 
        $$\SheafExt_{\cO_{X}}^{i} (\duBois_{X}^{l}, \omega_{X}) = 0 \text{ for $i > c$, $\forall l$.} $$
        In particular, we have $\depth(\duBois^l_X) \geq n-c$ for all $l$, and $\lcdef(X) \leq \max\{0,c-1\}$.
	\end{theo}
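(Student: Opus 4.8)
The plan is to reduce the statement to the stalk at the torus fixed point of an affine toric variety, and there to read off the vanishing from Theorem~\ref{theo:MHM-structure}. Since $\SheafExt^i_{\cO_X}(\duBois_X^l,\omega_X)$ is coherent, one may work on an affine chart $U_\sigma\subseteq X$; writing $U_\sigma\cong U'_\sigma\times(\mathbb{C}^\ast)^{n-\dim\sigma}$ with $U'_\sigma$ the affine toric variety attached to $\sigma$ as a full-dimensional cone, and using that $\duBois^\bullet$, $\omega$ and $\SheafHom$ are multiplicative under this product (the torus factor contributing only in cohomological degree $0$), one reduces to $\sigma$ full-dimensional of dimension $n$. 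The hypothesis passes to $U'_\sigma$, since a codimension-$c$ torus-invariant subvariety there is of the form $S_\mu$ with $\dim\mu=c$, and its fan is a subfan of the fan of the simplicial variety $S_\mu^X$. For a proper face $\tau\prec\sigma$ the chart $U_\tau$ has dimension $<n$, so by induction on $\dim X$ one has $\SheafExt^i_{\cO_X}(\duBois_X^l,\omega_X)|_{U_\tau}=0$ for $i>c$; as such charts cover $U_\sigma\setminus\{x_0\}$ ($x_0$ the fixed point), the whole problem reduces to showing that the stalk at $x_0$ vanishes for $i>c$. (When $c\ge n$ there is nothing to prove, since $\Ish_X^{n-l}$ sits in degrees $\le n-l\le c$.)

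Next I would translate into Hodge modules. Embedding $X\hookrightarrow Y$ in a smooth variety and combining Saito's identification of the graded pieces of $\DR_Y(\QQ_X^{H'})$ with shifts of the Du Bois complexes (see \S\ref{section:hodge-modules}, \S\ref{section:DB-complex}) with Grothendieck--Serre duality gives, up to an explicit shift,
\[ \gr^F_p\,\DR_Y(\DD\QQ_X^{H'})\ \simeq\ \bfR\SheafHom_{\cO_X}(\duBois_X^p,\omega_X)[p]\ \simeq\ \Ish_X^{n-p}[p], \]
so that $\SheafExt^i_{\cO_X}(\duBois_X^p,\omega_X)\simeq \cH^{\,i-p}\!\left(\gr^F_p\,\DR_Y(\DD\QQ_X^{H'})\right)$. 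Hence it suffices to bound, at $x_0$, the cohomological degrees in which the graded de Rham complexes of $\DD\QQ_X^{H'}$ are supported.

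This is where Theorem~\ref{theo:MHM-structure} enters. Dualized, it says that the cohomologies $\cH^r(\DD\QQ_X^{H'})$ ($0\le r\le n-3$) are built, through their weight filtrations, from Tate twists of the intersection cohomology Hodge modules $\IC_{S_\lambda}^H$, with $\lambda$ running over faces of $\sigma$ of dimension $\ge r+3$ (plus $\IC_X^H$ inside $\cH^0$), with precisely prescribed twists and multiplicities $a_\lambda^{r,j}$. The key combinatorial observation is that the hypothesis \emph{propagates upward}: if all codimension-$c$ torus-invariant subvarieties of $X$ are simplicial, then $S_\lambda$ is simplicial for every face $\lambda$ with $\dim\lambda\ge c$ --- for $\mu\preceq\lambda\preceq\nu$ with $\dim\mu=c$, the cone $\nu/\langle\lambda\rangle$ is the quotient of the simplicial cone $\nu/\langle\mu\rangle$ (a cone of the fan of $S_\mu$) by the linear span of one of its faces, hence is again simplicial. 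Therefore every $S_\lambda$ occurring with $\dim\lambda\ge c$ is simplicial, so $\IC_{S_\lambda}^H\simeq\QQ_{S_\lambda}^{H'}$, and by Saito's comparison each $\gr^F_k\,\DR_Y(i_{\lambda\ast}\IC_{S_\lambda}^H)$ is, up to a shift, the \emph{single} maximal Cohen--Macaulay sheaf $i_{\lambda\ast}\Omega_{S_\lambda}^{[-k]}$ (reflexive differentials on a quotient singularity are Cohen--Macaulay). Running the hypercohomology spectral sequence of $\DD\QQ_X^{H'}$ together with the spectral sequences of the weight filtrations, such a summand then contributes to $\SheafExt^i_{\cO_X}(\duBois_X^l,\omega_X)$ in a single cohomological degree, and one checks, using the range constraint $\dim\lambda\ge r+3$ together with the prescribed Tate twists, that the relevant multiplicities $a_\lambda^{r,j}$ vanish whenever that degree would exceed $c$. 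The finitely many faces $\lambda$ with $3\le\dim\lambda<c$, where $S_\lambda$ may be singular, I would handle by the inductive hypothesis applied to the lower-dimensional toric variety $S_\lambda$ (whose codimension-$(c-\dim\lambda)$ torus-invariant subvarieties are again simplicial), suitably enhanced so as to control $\gr^F_\bullet\,\DR$ of the relevant weight-graded pieces --- equivalently, of $\IC_{S_\lambda}^H$ --- rather than only of the Du Bois complexes. The genuinely delicate step, which I expect to be the main obstacle, is exactly this degree bookkeeping: extracting from the weight data of Theorem~\ref{theo:MHM-structure} and the simpliciality of the $S_\lambda$ the clean statement that no $\IC_{S_\lambda}^H$-summand can contribute in degree $>c$. (For $l=0,1$ one can instead argue purely combinatorially, the cohomology of the Ishida complex being a reduced cohomology of an explicit polyhedral subcomplex of $\partial\sigma$ whose connectivity follows from simpliciality; for general $l$ the Hodge-module route seems necessary.)

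Finally, the remaining assertions are formal. From $\depth\duBois_X^l=\min\{i:\SheafExt^{n-i}_{\cO_X}(\duBois_X^l,\omega_X)\ne 0\}$ and the vanishing for $i>c$ we get $\depth\duBois_X^l\ge n-c$. Moreover $\SheafExt^i_{\cO_X}(\cO_X,\omega_X)=0$ for $i>0$ (as $X$ is Cohen--Macaulay), so $\depth\duBois_X^0=n$, whereas $\depth\duBois_X^p\ge n-c$ gives $n-p-\depth\duBois_X^p\le c-1$ for $p\ge 1$; by the comparison of \cite{Mustata-Popa22:Hodge-filtration-local-cohomology}*{Corollary 5.3} between $\lcdef(X)$ and the depths of the Du Bois complexes, this yields $\lcdef(X)\le\max\{0,c-1\}$.
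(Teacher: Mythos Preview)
Your reduction to the degree-$0$ piece of the Ishida complex at the torus fixed point is essentially the paper's own reduction (Proof of Theorem~\ref{theo:simple-upperbound-depth} via Lemma~\ref{lemm:grade-parts-of-Ishida-complex}). The divergence comes at the next step: the paper proves the required vanishing $H^{i}(\Ish_{\sigma}^{l})=0$ for $i>c$ (Lemma~\ref{lemm:exactness-of-simple-dim-c}) by a direct combinatorial argument --- it builds, for each $\mu\in\cP_{m}$ with $m\ge c$, a basis $\{V_{\mu,\tau}\}_{\tau\in\cP_{l}^{\supset\mu}}$ of $V_{\mu}^{l}$ adapted to the simpleness hypothesis, and then uses a lexicographic shelling order to exhibit preimages. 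No Hodge modules enter. Your proposal instead tries to read off the vanishing from the decomposition in Theorem~\ref{theo:MHM-structure}, and this is where it breaks.

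The specific gap is the claim that ``the relevant multiplicities $a_{\lambda}^{r,j}$ vanish whenever that degree would exceed $c$.'' This is false. A summand $\IC_{S_{\lambda}}^{H}(n-t-j)^{a_{\lambda}^{q,j}}$ inside $\gr_{-(n-t)}^{W}\cH^{q}\DD\QQ_{X}^{H'}$ (with $\dim\lambda=t+2j$) contributes, in degree $0$, to $\SheafExt^{\,j+q}(\duBois_{X}^{\,n-t-j},\omega_{X})$; you would need $j+q\le c$ whenever $a_{\lambda}^{q,j}\ne0$. But take $c=1$ and $\sigma$ a $6$-dimensional cone over a simple polytope: any $5$-dimensional face $\tau$ is again a cone over a simple polytope, and by Theorem~\ref{theo:cone-over-simple} one has $a_{\tau}^{0,2}=\widetilde{h}_{2}^{\tau}-\widetilde{h}_{1}^{\tau}$, which is typically nonzero. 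Here $S_{\tau}$ is simplicial (so your single-degree analysis applies), yet $(q,j)=(0,2)$ gives $j+q=2>c=1$. The vanishing $H^{2}(\Ish_{\sigma}^{3})=0$ must therefore come from the \emph{differential} in the weight spectral sequence --- concretely, from the surjectivity of a map $\bigoplus_{\lambda\in\cP_{4}}\CC^{a_{\lambda}^{0,1}}\to\bigoplus_{\tau\in\cP_{5}}\CC^{a_{\tau}^{0,2}}$ --- and establishing that is no easier than the original combinatorial problem. (Your separation into $\dim\lambda\ge c$ versus $3\le\dim\lambda<c$ also conflates simpliciality of $S_{\lambda}$ with simpliciality of the cone $\lambda$; it is the latter that forces $a_{\lambda}^{\bullet,\bullet}=0$, and it need not hold for $\dim\lambda\ge c$.) The final deduction of the depth and $\lcdef$ bounds from the $\SheafExt$ vanishing is fine.
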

    The upper bound on $\lcdef(X)$ follows from the $\SheafExt$ vanishing because of Theorem \ref{theo:MP-lcd-intermsof-DuBois} below.

    \begin{rema}
        If $c = 0$ (i.e., if $X$ is a simplicial toric variety), this is immediate from \cite{Mustata-Popa22:Hodge-filtration-local-cohomology}*{Corollary 4.29}. It would be interesting to see if putting a nice restriction on some strata of certain codimension (say, on the depth of its Du Bois complexes) gives a lower bound on the depth of the Du Bois complex of the total space. Our proof of the theorem above relies on some special properties of toric varieties and some delicate combinatorics.
    \end{rema}


    We now give a complete description of $\QQ^{H'}_X$ for toric varieties corresponding to cones over simple polytopes.

    \begin{theo} \label{theo:cone-over-simple}
        Let $\sigma$ be a full-dimensional cone of dimension $n$ which is a cone over a simple polytope, and let $X$ denote the associated affine toric variety. Let $f_{i}$ be the number of $i$-dimensional faces in $\sigma$. Let
        $$ \widetilde{h}_{j} = \sum_{l=0}^{j} f_{n-l} {n-1-l \choose j-l} (-1)^{j-l}.$$
        
        If $j \geq n/2$, we have $a_{\sigma}^{0, j} = 0$. For $j < n/2$, we have the following description of the numbers $a_{\sigma}^{0, j}$:
        $$ a_{\sigma}^{0, j} =\widetilde{h}_{j} - \widetilde{h}_{j-1} = \sum_{l=0}^{j} (-1)^{l} f_{n-l+j} {n - j+l \choose l} = \sum_{l=0}^{j} (-1)^{j-l} f_{n-l} {n-l \choose j - l}.$$
    \end{theo}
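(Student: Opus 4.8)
The plan is to first use Theorem~\ref{theo:simple-upperbound-depth} to reduce $\QQ_X^{H'}$ to a single mixed Hodge module carrying an explicit weight filtration, then to pin down the multiplicities $a_\sigma^{0,j}$ by an Euler characteristic computation at the torus fixed point $x_0$, working by induction on $n=\dim\sigma$.

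\emph{Reduction.} Since all torus-invariant divisors of $X$ are simplicial, Theorem~\ref{theo:simple-upperbound-depth} with $c=1$ gives $\lcdef(X)=0$, so $\QQ_X^{H'}=\cH^0\QQ_X^{H'}=:\cM$ is a single mixed Hodge module. The same holds face by face: if $\lambda$ is the cone over a face $F$ of $P$, then $\lambda$ is itself a cone over the simple polytope $F$, so its affine toric variety has $\lcdef=0$ by Theorem~\ref{theo:lcdef-of-simple}; since $a_\lambda^{l,j}$ depends only on $\lambda$ (Theorem~\ref{theo:MHM-structure}(4)), this forces $a_\lambda^{l,j}=0$ for $l>0$. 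Moreover the orbit closure $S_\lambda$ is the affine toric variety of the cone over the face figure $P/F$, again a cone over a simple polytope, of dimension $n-\dim\lambda<n$. Feeding this into Theorem~\ref{theo:MHM-structure}, the weight filtration of $\cM$ collapses to $\gr^W_n\cM\simeq\IC_X^H$ and, for $k\ge1$, $\gr^W_{n-k}\cM\simeq\bigoplus_{j\ge1}\bigoplus_{\lambda\in\cP_{k+2j}}\IC_{S_\lambda}^H(-j)^{a_\lambda^{0,j}}$; thus $a_\sigma^{0,j}$ is exactly the multiplicity of $\QQ(-j)=\IC_{\{x_0\}}^H(-j)$ inside $\gr^W_{2j}\cM$, and it vanishes unless $1\le j<n/2$, because one needs $k=n-2j\ge1$.

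\emph{The Euler characteristic at $x_0$.} I would induct on $n$; for proper faces $\lambda$ the variety $S_\lambda$ is a cone over a simple polytope of smaller dimension, so the numbers $a_\lambda^{0,j}$ are known. In $K_0(\MHM(X))$ one has $[\cM]=[\gr^W\cM]=[\IC_X^H]+\sum_{\lambda,j}a_\lambda^{0,j}[\IC_{S_\lambda}^H(-j)]$. Now apply the exact functor $[\cK]\mapsto\sum_q(-1)^q[\cH^q(i_{x_0}^*\cK)]\in K_0(\MHM(pt))$. The left-hand side maps to $(-1)^n[\QQ]$, since $i_{x_0}^*\QQ_X^H[n]=\QQ_{pt}^H[n]$. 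On the right-hand side, $i_{x_0}^*$ of each $\IC_{S_\lambda}^H$ is the stalk of the intersection cohomology Hodge module of a toric variety at its torus fixed point, which is of Hodge--Tate type with weighted Poincaré polynomial equal to the combinatorial (toric) $g$-polynomial of the defining cone ($P$ for $X$ itself, $P/F$ for $S_\lambda$), while $i_{x_0}^*\QQ(-j)=\QQ(-j)$. Writing $L=[\QQ(-1)]$, the identity becomes an equation in $\ZZ[L]$; since the classes $[\QQ(-j)]$, $j\ge0$, are linearly independent in $K_0(\MHM(pt))$, comparing coefficients determines $\sum_j a_\sigma^{0,j}L^j$ in terms of those $g$-polynomials and the inductively known $a_\lambda^{0,j}$.

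\emph{Combinatorial identification, and the main obstacle.} What remains is to verify that the expression so obtained equals $\sum_j(\widetilde h_j-\widetilde h_{j-1})L^j$, and that the three displayed formulas for $a_\sigma^{0,j}$ coincide; the latter is a routine binomial manipulation. The former is the crux: a priori the quantity produced above depends on the whole combinatorial type of $P$ through the face figures $P/F$, whereas the claimed answer depends only on the face numbers $f_i$ of $\sigma$. Proving that the finer dependence cancels, leaving the clean $f$-vector formula, is the main obstacle, and it is precisely where the combinatorics of simple polytopes enters in an essential way — one uses the coincidence of the $g$-polynomials of a simple polytope and of its dual simplicial polytope, the Dehn--Sommerville relations, and the standard identities that rewrite sums of face-figure invariants over all faces of a fixed dimension through the $f$-vector of $P$. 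As an alternative to the induction, one can instead compute $[\cM]\in K_0(\MHM(X))$ directly from the $T$-graded Ishida complex of $\S$\ref{section:Ishida-complex} and read off the coefficient of $\IC_{\{x_0\}}^H(-j)$; this replaces the inductive bookkeeping by a delicate analysis of the combinatorial differentials.
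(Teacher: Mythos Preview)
Your outline is sound and genuinely different from the paper's. Both arguments agree on the reduction and on inducting over faces of $\sigma$; they diverge in the functor used to produce a scalar equation for $a_\sigma^{0,j}$. You take the stalk $i_{x_0}^*$ and work in $K_0(\MHM(\mathrm{pt}))$, so the IC-stalk polynomial $H_\sigma$ (the toric $g$-polynomial of $P$) enters, and the combinatorial verification you defer would have to begin by identifying $H_\sigma$ explicitly in terms of the $f$-vector. The paper instead applies $(\gr_j\DR(-))_0$ to $\DD\QQ_X^{H'}$ for each $j$ separately: this lands in the degree-zero Ishida complex $\Ish_\sigma^{n-j}$, whose Euler characteristic is the explicit alternating sum $\sum_m(-1)^mf_m\binom{n-m}{j}$ with no IC data in sight, and Lemma~\ref{lemm:exactness-of-simple-dim-c} (simple in dimension $1$ $\Rightarrow$ cohomology only in degree $1$) turns that Euler characteristic into a single equation directly in the face numbers; the remaining ``Step~3'' is then a concrete, if lengthy, binomial identity. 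One simplification you understate: for every nonzero proper face $\lambda$, the orbit closure $S_\lambda$ is not merely a cone over a simple polytope but is actually \emph{simplicial}, because the face figure $P/F$ of a simple polytope at any face $F$ is a simplex; hence $\IC_{S_\lambda}^H\simeq\QQ_{S_\lambda}^{H'}$ has stalk $\QQ[n-d_\lambda]$ at $x_0$, so the only nontrivial $g$-polynomial in your $K_0$-equation is $H_\sigma$ itself. This makes the ``finer dependence'' you worry about far tamer than your final paragraph suggests, though the paper's Ishida route still has the advantage of bypassing $H_\sigma$ altogether.
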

    We point out that for all proper faces $\lambda \subset \sigma$, the numbers $a_\lambda^{0,j}$ can be calculated using a similar formula for $\lambda$ because $\lambda$, being a face of a cone over a simple polytope, is still a cone over a simple polytope. 

    \begin{rema}
        The definition of $\widetilde{h}_{l}$ in Theorem \ref{theo:cone-over-simple} is different from $h_{l}$ in Theorem \ref{theo:cone-over-simplicial}. The role of the face numbers are reversed so that the numbers $\widetilde{h}_{l}$ and $h_{l}$ both form a `unimodal symmetric mountain', which is a consequence of hard Lefschetz and Poincar\'{e} duality (see \cite{Stanley:Intersection-cohomology-toric-varieties} for details).
    \end{rema}

    We also give a related result regarding the singular cohomology of projective toric varieties associated to simple polytopes. For such a toric variety, we calculate all its Hodge--Du Bois numbers, and hence its Betti numbers (since they are sums of Hodge--Du Bois numbers). We point out that the Hodge--Du Bois diamond turns out to be highly asymmetric in this case.

    \begin{theo} \label{theo:sing-coho-simple}
        Let $P$ be a simple lattice polytope of dimension $n$ containing the origin in the interior. Consider the projective toric variety $(X_{P}, D_{P})$ associated to $P$ as above (see \S\ref{subsec:sing-coho-simple} for the construction). Let $f_{i}$ be the number of $i$-dimensional faces of $P$. Then we have the following description for the Hodge--Du Bois numbers:
        $$ \underline{h}^{p, q}(X_{P}):= \dim H^{q}(\duBois_{X_{P}}^{p}) = \begin{cases}
            1 & p = q\text{ and } p \neq n-1 \\
            0 & p \neq q \text{ and }q \neq n-1\\
            0 & (p, q) = (n, n-1),(0, n-1) \\
            \displaystyle\sum_{j=0}^{n-p} f_{j-1} (-1)^{j-1} {n-j \choose p} + (-1)^{n-p} & q = n-1, 1\leq p < n-1 \\
            f_{0} - n & (p,q) = (n-1, n-1).
        \end{cases} $$
    \end{theo}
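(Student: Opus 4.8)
The plan is to reduce the statement to a computation of the weight-graded pieces of the singular cohomology of $X_{P}$, and then to feed in the structural description of $\QQ_{X_{P}}^{H'}$ from Theorem~\ref{theo:MHM-structure} together with the combinatorial formula of Theorem~\ref{theo:cone-over-simple}. Since $X_{P}$ is proper, $\underline{h}^{p,q}(X_{P}) = \dim_{\CC}\gr_{F}^{p}H^{p+q}(X_{P}^{\an},\CC)$; by Corollary~\ref{coro:sing-coho-HT-type} the mixed Hodge structure on $H^{k}(X_{P}^{\an},\QQ)$ is mixed of Hodge--Tate type, so $\gr_{F}^{p}H^{p+q}\cong\gr^{W}_{2p}H^{p+q}(X_{P}^{\an},\QQ)\otimes_{\QQ}\CC$. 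In particular $\underline{h}^{p,q}(X_{P})$ depends only on $p$ and $p+q$, and it suffices to compute $\dim_{\CC}\gr^{W}_{2p}H^{k}(X_{P}^{\an},\QQ)$ for all $p,k$.

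Next I would pin down $\QQ_{X_{P}}^{H'}$. The fan of $X_{P}$ is the face fan of $P$, so each maximal affine chart is the affine toric variety of a cone over a facet $F$ of $P$; a facet of a simple polytope is again simple, so by Theorem~\ref{theo:lcdef-of-simple} every such chart has local cohomological defect $0$. Hence $\lcdef(X_{P}) = 0$, so $\QQ_{X_{P}}^{H'}$ is a single mixed Hodge module (concentrated in cohomological degree $0$), and by parts (2), (3) of Theorem~\ref{theo:MHM-structure} (with $l = 0$) its weight filtration has $\gr^{W}_{n}\QQ_{X_{P}}^{H'}\simeq\IC_{X_{P}}^{H}$ and, for $k\geq 1$,
\[ \gr^{W}_{n-k}\QQ_{X_{P}}^{H'}\ \simeq\ \bigoplus_{j\geq 1}\ \bigoplus_{\lambda\in\cP_{k+2j}}\IC_{S_{\lambda}}^{H}(-j)^{a_{\lambda}^{0,j}}, \]
where $\lambda = \mathrm{Cone}(G)$ runs over the cones over the faces $G$ of $P$, and each multiplicity $a_{\lambda}^{0,j}$ is computed by Theorem~\ref{theo:cone-over-simple} applied to $\lambda$ (itself a cone over the simple polytope $G$), in terms of the face numbers of $G$. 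Moreover the orbit closure $S_{\lambda}$ for $\lambda = \mathrm{Cone}(G)$ is the toric variety of the face figure $P/G$; since $P$ is simple, a $d$-face $G$ lies on exactly $n-d$ facets, so $P/G$ is a simplex of dimension $n-d-1$ and $S_{\lambda}$ is a (possibly weighted) projective space. Thus $\IH^{\bullet}(S_{\lambda})$ is concentrated in even degrees with $\IH^{2a}(S_{\lambda}) = \QQ(-a)$ for $0\leq a\leq\dim S_{\lambda}$, and likewise $\IH^{\bullet}(X_{P})$ is pure of Hodge--Tate type.

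Now I would push $\QQ_{X_{P}}^{H'}$ forward to a point. The weight spectral sequence for $a_{*}\QQ_{X_{P}}^{H'}$ has $E_{1}$-terms built from $\IH^{\bullet}(S_{\lambda})(-j)^{a_{\lambda}^{0,j}}$ and from $\IH^{\bullet}(X_{P})$, it degenerates at $E_{2}$ (as always holds for the weight spectral sequence of a mixed Hodge module), and every $E_{2}$-term is pure of a single weight; reading off the weight-$2p$ part of $H^{k}(X_{P}^{\an},\QQ)$ then expresses $\dim_{\CC}\gr^{W}_{2p}H^{k}$ as a signed sum of the numbers $a_{\mathrm{Cone}(G)}^{0,j}$ weighted by dimensions of intersection cohomology groups of weighted projective spaces, plus the contribution of $\IC_{X_{P}}^{H}$. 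Substituting the formula of Theorem~\ref{theo:cone-over-simple} for $a_{\mathrm{Cone}(G)}^{0,j}$ (a signed binomial sum in the face numbers of $G$) and summing over all faces $G$ of $P$ --- using the combinatorial relations between the face numbers of $P$, of its faces $G$, and of the face figures $P/G$ --- should collapse, after manipulation of alternating binomial sums, to the stated closed form, with the extreme cases $G = \varnothing$ (the big torus) and $G$ a vertex producing the special entries at $(p,q) = (0,n-1)$, $(n,n-1)$ and $(n-1,n-1)$.

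The main obstacle is this final bookkeeping, and in particular controlling the $E_{1}$-differentials of the weight spectral sequence: equivalently, one must show that for fixed $p$ the groups $\gr^{W}_{2p}H^{k}(X_{P}^{\an},\QQ)$ are nonzero only at $k = 2p$ (giving the diagonal entries $\underline{h}^{p,p} = 1$) and at $k = p + n - 1$. I expect this to come out of the vanishing ranges already encoded in Theorem~\ref{theo:MHM-structure} --- a summand $\IC_{S_{\lambda}}^{H}(-j)$ with $\lambda\in\cP_{k+2j}$ forces $k + 2j\leq n$, so only faces of bounded dimension and only small Tate twists intervene --- combined with the fact that each $S_{\lambda}$ is a weighted projective space, whose intersection cohomology occupies a single Tate class in each even degree, which should rigidly pin down the relevant pieces of the spectral sequence. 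A useful consistency check throughout is the identity $E(X_{P};t) = (t-1)^{n} + \sum_{d=0}^{n-1}f_{d}\,(t-1)^{n-1-d}$ for the $E$-polynomial, obtained from additivity over the torus-orbit stratification together with the Hodge--Tate property, whose coefficient of $t^{p}$ must equal $\sum_{q}(-1)^{p+q}\underline{h}^{p,q}(X_{P})$ as computed from the claimed formula.
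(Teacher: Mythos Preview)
Your approach is genuinely different from the paper's, and while several of your structural observations are correct and useful, the proposal leaves a real gap at exactly the point you flag as ``the main obstacle.''

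The paper does \emph{not} work through the weight spectral sequence for $a_{*}\QQ_{X_{P}}^{H'}$.  Instead it passes by Grothendieck duality to the Ishida complex, so that $\underline{h}^{p,q}(X_{P}) = \dim\HH^{n-q}(X_{P},\Ish_{X_{P}}^{n-p})$, and then introduces an auxiliary $(n+1)$-dimensional cone $\varsigma$ built as the graph of the support function of $D_{P}$.  This $\varsigma$ is itself a cone over the simple polytope $P$, so by the depth bound for such cones (Lemma~\ref{lemm:exactness-of-simple-dim-c}) one has $H^{j}(\Ish_{\varsigma}^{i})=0$ for $j>1$.  Theorem~\ref{theo:Lefschetz} gives an exact triangle $\bfR\Gamma\,\Ish_{X_{P}}^{p'}\to\Ish_{\varsigma}^{p'}\to\bfR\Gamma\,\Ish_{X_{P}}^{p'-1}\xrightarrow{+1}$, and a short descending induction on $p'$ immediately forces $\HH^{q'}(\Ish_{X_{P}}^{p'})=0$ unless $q'=1$ or $p'=q'$.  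That is the entire vanishing pattern; the precise values on the $q=n-1$ row then drop out of the $E$-polynomial identity $E(X_{P})=\sum_{j}f_{j-1}(uv-1)^{n-j}$, which you also wrote down.

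By contrast, your route requires two nontrivial inputs that you do not supply.  First, the top weight piece contributes $\IH^{\bullet}(X_{P})$, whose Betti numbers you never compute; these are the $g$- or $h$-numbers of a simple polytope, not immediately the $f_{i}$, and without them you cannot even write down the $E_{1}$-page.  Second, and more seriously, you have to compute the ranks of the $E_{1}$-differentials in the weight spectral sequence.  Degeneration at $E_{2}$ is automatic, but that only tells you $E_{2}=E_{\infty}$; it says nothing about what $E_{2}$ \emph{is}.  Your observation that the $S_{\lambda}$ are weighted projective spaces pins down each $E_{1}$-term as a direct sum of Tate classes, but this does not determine the maps between them: for a fixed weight $2p$ the differentials interlink contributions from faces of many different dimensions with varying Tate twists, and nothing in Theorem~\ref{theo:MHM-structure} alone forces the ranks.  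Your ``I expect this to come out'' is precisely the content of the theorem, and it is what the paper's auxiliary-cone argument supplies for free.  Without resolving this, your argument proves only the $E$-polynomial identity, which is an Euler-characteristic statement and cannot separate the row $q=n-1$ from the diagonal.
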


    As an application of Theorem \ref{theo:sing-coho-simple}, we calculate the Hodge-Du Bois numbers of the binomial hypersurface $S = \{x_{0}\cdots x_{n} = y_{0}\cdots y_{n}\} \subset \PP^{2n+1}$ in Example \ref{exam:sing_coh_binomial_hypersurface}.

    \noindent
    \textbf{Outline of the Paper.} We discuss the necessary background on Hodge modules and local cohomology in \S\ref{sec:Preliminaries} and toric varieties in \S\ref{section:toric-varieties}. We then discuss the Ishida complex in detail in \S\ref{section:Ishida-complex}, where we also present some results of \cite{Ishida,Ishida2} from scratch, using different methods. We then prove Theorems \ref{theo:MHM-structure}, \ref{theo:depth-of-Du-Bois-general}, and Corollary \ref{coro:sing-coho-HT-type} in \S\ref{section:Hodge-module-structure-Q}. In \S\ref{section:cone-over-simplicial-polytope} and \S\ref{section:cone-over-simple-polytope}, we prove the results about the toric varieties associated to cones over simplicial polytopes and cones over simple polytopes, respectively. Finally, in Appendix \ref{sec:appendix-explicit-calculation}, we explicitly calculate the numbers appearing in Theorem \ref{theo:MHM-structure} up to dimension 5.
	
	\section{Preliminaries}\label{sec:Preliminaries}
    \subsection{The Du Bois complex}\label{section:DB-complex}
	In \cite{DuBois:complexe-de-deRham}, Du Bois introduced a filtered complex $\duBois_{X}^{\bullet}$ which can be thought of as a replacement of the de Rham complex $\Omega_{X}^{\bullet}$ when $X$ is singular. By taking the graded quotients, the {\it $p$-th Du Bois complex} is defined as
	$$ \duBois_{X}^{p} := \gr_{F}^{p} \duBois_{X}^{\bullet} [p] \in D^b_{\rm coh}(X).$$
    We have a natural comparison map $\Omega_{X}^{\bullet} \to \duBois_{X}^{\bullet}$ of filtered complexes which is an isomorphism if $X$ is smooth, where the filtration on $\Omega_{X}^{\bullet}$ is given by truncation. 

    The Du Bois complex is indeed the `correct' object to consider when $X$ is singular. For example, we have the isomorphism $\CC_{X} \simeq \duBois_{X}^{\bullet}$ and so, the hypercohomology of $\duBois_{X}^{\bullet}$ computes the singular cohomology of $X$. If $X$ is a proper variety, then the spectral sequence computing the singular cohomology degenerates at $E_{1}$. In particular, the filtration given by the spectral sequence agrees with Deligne's Hodge filtration on the singular cohomology of algebraic varieties in the following sense:
    $$ F^{p}H^{k}(X, \CC) = \im (\HH^{k}(X, F^{\geq p}\duBois_{X}^{\bullet}) \to H^{k}(X, \CC)).$$
    Hence, the graded quotient can be expressed as $\gr_{F}^{p} H^{k}(X,\CC) \simeq \HH^{k-p}(X, \duBois_{X}^{p})$.

    By \cite{Guillen-Navarro-Gainza:Hyperresolutions-cubiques}*{V.4}, the Du Bois complex $\duBois_{X}^{p}$ coincides with the sheaf of reflexive differentials $\Omega_{X}\ubind{p}$ when $X$ is a toric variety. In particular, $\duBois_{X}^{p}$ is a sheaf in this case. Since all of the varieties that we deal with are toric, we will not distinguish the Du Bois complex and the sheaf of reflexive differentials throughout this article.

    \subsection{Perverse sheaves}
	For an algebraic variety $X$, the bounded derived category of sheaves of $\QQ$-vector spaces on $X$ contains the full subcategory $D^b_c(X)$, called the \textit{constructible bounded derived category}, whose objects are bounded complexes with constructible cohomology sheaves. Inside $D^b_c(X)$, we have the (abelian) subcategory of \textit{perverse sheaves} which is the heart of a special $t$-structure (called the \textit{perverse} $t$-structure) on $D^b_c(X)$ or equivalently, as the subcategory of constructible complexes $K^\bullet$ which satisfy certain support conditions on the cohomologies of $K^\bullet$ and its Verdier dual $\DD K^{\bullet}$. Under the Riemann--Hilbert correspondence, they are precisely those constructible complexes that correspond to regular holonomic $\cD$-modules. Typical examples of perverse sheaves are intersection complexes associated to local systems on locally closed subsets of $X$; in fact, every perverse sheaf on $X$ is built out of such intersection complexes. For more details on perverse sheaves, we refer to \cite{dCM-Decomposition} and \cite{HTT-Dmodulesbook}.
    
    \subsection{Hodge modules} \label{section:hodge-modules} We briefly recall the results for Hodge modules we will use. See \cite{saito1988modulesdeHodge, saito1990mixedHodgemodules} for basic facts and results. For an algebraic variety $X$, we denote by $\HM(X, w)$ and $\MHM(X)$ the category of pure Hodge modules of weight $w$ and mixed Hodge modules on $X$, respectively. On a smooth variety $W$, mixed Hodge modules are regular holonomic $\cD$-modules $\cM$ with a filtration $F_{\bullet}$ by coherent sheaves (called the Hodge filtration) and a filtration $W_{\bullet}$ by $\cD$-modules, with the data of an isomorphism $\alpha \colon \DR \cM \xrightarrow{\simeq} K \otimes_{\QQ} \CC$, where $K$ is a $\QQ$-perverse sheaf, satisfying certain complicated conditions. For singular varieties $X$ embeddable to a smooth variety $W$, the mixed Hodge modules on $X$ are simply the ones on $W$ supported on $X$. The category $\MHM(X)$ of mixed Hodge modules on $X$ does not depend on the embedding. The objects in the category $\HM(X, w)$ are the objects $\cM$ in $\MHM(X)$ such that $\gr_{w'}^{W}\cM = 0$ for all $w' \neq w$. The most important thing throughout this article is that they have the six functor formalism which is compatible with the corresponding functors for the underlying perverse sheaves.

    We first describe the structure of pure Hodge modules on $X$. For a closed irreducible subvariety $Z$, we say that a pure Hodge module $\cM$ has strict support $Z$ if it is supported on $Z$ and has no nonzero subobjects or quotients supported on a strictly smaller subset of $Z$. Any pure Hodge module $\cM$ on $X$ admits a decomposition by strict support, that is, there is a decomposition
    $$ \cM = \bigoplus_{Z \subset X} \cM_{Z}$$
    such that $\cM_{Z}$ has strict support $Z$, where the sum runs over all irreducible subvarieties of $X$. Moreover, any pure Hodge module $\cM_{Z}$ of weight $w$ with strict support $Z$ is a polarizable variation of Hodge structures of weight $w - \dim Z$ on an open subset of $Z_{\mathrm{reg}}$. Conversely, any variation of Hodge structures on an open subset of the smooth locus of $Z$ can be uniquely extended to a pure Hodge module on $X$ with strict support $Z$, where the underlying perverse sheaf is the intersection complex associated to the variation of Hodge structures. For example, if $X$ is an algebraic variety of dimension $n$, then there is a trivial variation of Hodge structures on the regular locus $X_{\mathrm{reg}}$ which defines the \textit{intersection cohomology Hodge module} $\IC_{X}^{H}$. This is a pure Hodge module of weight $n$ on $X$.
    
    The graded de Rham complex can be associated to an object in the derived category of mixed Hodge modules. If $X$ is a smooth variety and $\cM$ is a mixed Hodge module on $X$ (by considering its underlying filtered left $\cD$-module), then for $k \in \ZZ$, the graded de Rham complex is given by
	$$ \gr_{k}^{F} \DR \cM = [ \gr_{k}^{F} \cM \to \Omega_{X}^{1} \otimes \gr_{k+1}^{F} \cM \to \ldots \to \Omega_{X}^{\dim X} \otimes \gr_{k+ \dim X}^{F}\cM]$$
	which sits in cohomological degrees $-\dim X, \ldots, 0$. Since the morphisms between Hodge modules are strict with respect to the Hodge filtration, we have an exact functor
	$$\gr_{k}^{F}\DR: D^{b}(\MHM(X)) \to D^{b}_{\coh}(X).$$
	Sometimes, we will omit the Hodge filtration $F$, and write simply $\gr_{k} \DR$. If $X$ is singular, but embeddable in a smooth variety $W$, we can consider objects in $\MHM(X)$ as objects in $\MHM(W)$ supported on $X$ and consider the graded de Rham complex on $W$. It is a fact that the graded de Rham complex is indeed an object in $D_{\coh}^{b}(X)$ and does not depend on the embedding in the ambient smooth variety.
	
	We describe the behavior of Hodge modules under the duality functor $\DD$. For $\cM \in D^{b}(\MHM(X))$, the graded de Rham complex behaves in the following way (see \cite{saito1988modulesdeHodge}*{\S2.4}):
	$$ \gr_{k} \DR \DD (\cM) \simeq \bfR \SheafHom_{\cO_{X}} (\gr_{-k} \DR \cM, \omega_{X}^{\bullet}). $$
	For $\cM \in D^{b}(\MHM(X))$, we have $\cH^{l} \DD \cM\simeq \DD\cH^{-l} \cM$, and for a single mixed Hodge module $\cM \in \MHM(X)$, we have $\DD\gr_{w}^{W} \cM \simeq \gr_{-w}^{W} \DD \cM$ (see \cite{saito1990mixedHodgemodules}*{\S2.a}). These statements are stated in terms of a single object $\cM \in \MHM(X)$, but they automatically generalize to an object in the derived category since $\DD$ is contravariant exact.
	
	Next, we describe the behavior of a Hodge module under smooth pull-back (see \cite{saito1990mixedHodgemodules}*{\S2.d}). Let $\pi : X \to Y$ be a smooth morphism of relative dimension $r$. Then for $\cM \in D^{b}(\MHM(Y))$, we have
	$$ \pi\sta \cH^{l} \cM \simeq \left(\cH^{l+r} \pi\sta \cM\right) [-r],$$
	and
	$$ \pi\sta \gr_{k}^{W} \cH^{l} \cM \simeq \left(\gr_{k+r}^{W} \cH^{l+r}\pi\sta \cM \right)[-r].$$

    For a Hodge module $\cM$ and an integer $r$, we have the \textit{Tate twist} $\cM(r)$ which shifts the Hodge filtration by $F_{\bullet} \cM(r) = F_{\bullet - r}\cM$ and its underlying perverse sheaf $K$ by $K(r) = K \otimes_{\QQ} (2\pi i)^r \QQ$. This decreases the weight of $\cM$ by $2r$.

    Lastly, we state the Decomposition theorem for Hodge modules. For a proper morphism $\pi \colon Y \to X$ and a pure Hodge module $\cM$ of weight $w$ on $Y$, we have a (non-canonical) decomposition
    $$ \pi\lsta \cM \simeq \bigoplus_{j} \cH^{j} \pi\lsta \cM [-j],$$
    where $\cH^{j}\pi\lsta \cM$ is a pure Hodge module of weight $w + j$ on $X$.

    \subsection{Mixed Hodge structure on singular cohomology}
    While Deligne's theory puts a mixed Hodge structure on the singular cohomology of an algebraic variety, Saito's theory also provides a way to put a mixed Hodge structure on it, using the trivial Hodge module. The category of mixed Hodge modules over a point can be identified with the category of mixed Hodge structures. Hence, we have the Hodge module $\QQ_{\mathrm{pt}}^{H}$ with weight zero given by the following mixed Hodge structure $(V, W_{\bullet}, F^{\bullet})$:
	$$V = \QQ, \quad W_{-1}V = 0, \quad W_{0}V = V,\quad  F^{0}V_{\CC} = V_{\CC}, \quad F^{1}V_{\CC} = 0.$$
    For an arbitrary variety $X$, one defines
    $$ \QQ_{X}^{H} = a_{X}\sta \QQ_{\mathrm{pt}}^{H} \in D^{b}(\MHM(X)),$$
    where $a_{X} \colon X \to \{ \mathrm{pt}\}$ is the structure morphism. This is an object in the derived category of mixed Hodge modules on $X$, whose underlying constructible complex is $\QQ_{X}$. We call $\QQ_{X}^{H'}:=\QQ_{X}^{H}[\dim X]$ the \textit{trivial Hodge module}.

    Since the six functor formalism for Hodge modules is compatible with the underlying perverse sheaves, the $\cH^{k} (a_{X\ast} \QQ_{X}^{H}) \in \MHM(\mathrm{pt})$ is a mixed Hodge structure whose underlying vector space is $H^{k}(X, \QQ)$.

    In \cite{Saito-MixedHodgecomplexes}, Saito shows that the two ways to put a mixed Hodge structure on $H^{k}(X, \QQ)$ agree in a sheaf-theoretic sense. For example, it is a consequence of \cite{Saito-MixedHodgecomplexes}*{Theorem 0.2} that the graded de Rham complex of $\QQ_{X}^{H}$ is related to the Du Bois complex in the following way:
    $$ \gr_{k}\DR_{X} \QQ_{X}^{H}[n] \simeq \duBois_{X}^{-k}[n+k].$$

    In particular, if $X$ is proper, then the graded pieces $\gr_{F}^{p} H^{k}(X)$ in Saito's theory can also be computed by $\HH^{k-p}(X, \duBois_{X}^{p})$, using the commutativity of the graded de Rham functor with proper push-forward \cite{saito1988modulesdeHodge}*{2.3.7}. We remark that one can also get the isomorphism above by using \cite{Mustata-Popa22:Hodge-filtration-local-cohomology}*{Proposition 5.5} and duality, when $X$ is embeddable in a smooth variety.
    
	\subsection{Local cohomology} \label{subsec:local-cohomology}
	Let $X$ be a closed algebraic subvariety of a smooth, irreducible complex algebraic variety $Y$ of dimension $n$. For a quasi-coherent $\cO_{Y}$-module $\cM$, the $q$-th local cohomology sheaf $\cH_{X}^{q}\cM$ is the $q$-th cohomology of the derived functor of $\underline{\Gamma}_{X}$, given by the subsheaf of local sections set theoretically supported on $X$.

	Following \cite{Mustata-Popa22:Hodge-filtration-local-cohomology}\footnote{We remark that the role of $X$ and $Y$ is different from \cite{Mustata-Popa22:Hodge-filtration-local-cohomology}. We chose the notation in this way because the embedding is irrelevant for our purposes, we actually work on the singular variety $X$ itself.}, we have the following characterization of the local cohomological dimension and the local cohomological defect in terms of the vanishing of the cohomologies of the Grothendieck duals of the Du Bois complexes.
	\begin{theo}[\cite{Mustata-Popa22:Hodge-filtration-local-cohomology}*{Corollary 5.3}] \label{theo:MP-lcd-intermsof-DuBois}
		Let $X$ be a subvariety of a smooth variety $Y$. Then for any integer $c$, the following are equivalent:
		\begin{enumerate}
			\item $\lcd(Y, X) \leq c$.
			\item $\SheafExt_{\cO_{Y}}^{j+i+1} (\duBois_{X}^{i}, \omega_{Y}) = 0$ for all $j \geq c$ and $i \geq 0$.
		\end{enumerate}
		Equivalently, using Grothendieck duality for the inclusion $X \hookrightarrow Y$, the following are equivalent:
		\begin{enumerate}
			\item $\lcdef(X) \leq c$.
			\item $\SheafExt_{\cO_{X}}^{j+i+1} (\duBois_{X}^{i}, \omega_{X}^{\bullet}[-\dim X]) = 0$ for all $j \geq c$ and $i \geq 0$.
		\end{enumerate}
	\end{theo}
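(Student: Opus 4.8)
The plan is to deduce the equivalence from the Hodge-module description of local cohomology recalled in \S\ref{section:hodge-modules}, together with a detection principle for the vanishing range of the cohomologies of an object of $D^b(\MHM(X))$ in terms of its graded de Rham complexes. Throughout I write $r:=\codim_Y(X)$.

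First I would translate the hypothesis into a statement about the dual trivial Hodge module $\DD\QQ_X^{H'}$. By the discussion in \S\ref{section:hodge-modules}, the sheaf $\cH_X^q(\cO_Y)$ is the $\cD$-module underlying $i\lsta\cH^{q-r}(\DD\QQ_X^{H'})$, so $\lcd(Y,X)\le c$ holds if and only if $\cH^j(\DD\QQ_X^{H'})=0$ for every $j>c-r$.

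Next I would prove the following lemma: for $\cN\in D^b(\MHM(X))$ and $M\in\ZZ$, one has $\cH^j(\cN)=0$ for all $j>M$ if and only if $\cH^j(\gr_k\DR_X\cN)=0$ for all $j>M$ and all $k\in\ZZ$. The ``only if'' direction is formal: $\gr_k\DR_X$ is a triangulated functor sending a single mixed Hodge module into complexes concentrated in cohomological degrees $\le 0$, so applying it to the canonical truncations of the bounded complex $\cN$ yields the bound. For the ``if'' direction, if $N':=\max\{j:\cH^j(\cN)\ne 0\}$ were $>M$, then applying $\gr_k\DR_X$ to the truncation triangle $\tau^{\le N'-1}\cN\to\cN\to\cH^{N'}(\cN)[-N']\xrightarrow{+1}$ and invoking the easy direction for $\tau^{\le N'-1}\cN$ would force $\cH^0\big(\gr_k\DR_X\,\cH^{N'}(\cN)\big)=0$ for every $k$; thus it is enough to check that any nonzero $\cN'\in\MHM(X)$ has $\cH^0(\gr_k\DR_X\cN')\ne 0$ for some $k$. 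Embedding $X$ into a smooth variety $W$ of dimension $m$ and passing to the underlying Hodge-filtered left $\cD_W$-module, the differentials of $\gr_\bullet^F\DR_W\cN'$ are $\cO_W$-linear and induced by the $\cT_W$-action on $\gr_\bullet^F\cN'$, so $\cH^0(\gr_k\DR_W\cN')\simeq\omega_W\otimes_{\cO_W}\big(\gr_{k+m}^F\cN'\big/\cT_W\cdot\gr_{k+m-1}^F\cN'\big)$. Choosing $k$ so that $k+m$ equals the lowest degree in which the good filtration $\gr_\bullet^F\cN'$ is nonzero (such a degree exists since $F_\bullet$ is bounded below and $\cN'\ne 0$), the subtracted submodule vanishes and $\cH^0(\gr_k\DR_W\cN')\simeq\omega_W\otimes\gr_{k+m}^F\cN'\ne 0$.

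Finally I would convert the statement ``$\cH^j(\gr_k\DR_X\DD\QQ_X^{H'})=0$ for $j>c-r$ and all $k$'' into the claimed $\SheafExt$-vanishing. Combining the identity $\gr_k\DR_X\QQ_X^{H'}\simeq\duBois_X^{-k}[\dim X+k]$ with the duality formula $\gr_k\DR_X\DD\cM\simeq\bfR\SheafHom_{\cO_X}(\gr_{-k}\DR_X\cM,\omega_X^\bullet)$ and Grothendieck duality for the closed embedding $i$ (so $i^!\omega_Y^\bullet=\omega_X^\bullet$, $\omega_Y^\bullet=\omega_Y[\dim Y]$) gives
$$\gr_k\DR_X\DD\QQ_X^{H'}\ \simeq\ \bfR\SheafHom_{\cO_X}(\duBois_X^k,\omega_X^\bullet)[k-\dim X]\ \simeq\ \bfR\SheafHom_{\cO_Y}(\duBois_X^k,\omega_Y)[k+r].$$
Thus $\cH^j$ of the left-hand side equals $\SheafExt_{\cO_Y}^{\,j+k+r}(\duBois_X^k,\omega_Y)$, and combining with the first two steps, $\lcd(Y,X)\le c$ is equivalent to $\SheafExt_{\cO_Y}^{\,j+k+r}(\duBois_X^k,\omega_Y)=0$ for all $j>c-r$ and $k\ge 0$, i.e. to $\SheafExt_{\cO_Y}^{\,j+i+1}(\duBois_X^i,\omega_Y)=0$ for all $j\ge c$ and $i\ge 0$. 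This establishes $(1)\Leftrightarrow(2)$; the reformulation in terms of $\omega_X^\bullet[-\dim X]$ and $\lcdef(X)=\lcd(Y,X)-r$ follows from the same Grothendieck-duality isomorphism applied to $i\lsta\duBois_X^i$ together with a shift of indices. I expect the substantive difficulty to lie in the ``if'' half of the detection lemma — identifying the associated-graded object on the smooth ambient variety and seeing that the $\cO_W$-linearity of the graded de Rham differential, together with boundedness below of the Hodge filtration, forces the top cohomology to be nonzero; the remaining steps are the cited Hodge-module input and bookkeeping with shifts.
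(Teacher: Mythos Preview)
The paper does not prove this statement; it is quoted from \cite{Mustata-Popa22:Hodge-filtration-local-cohomology}*{Corollary 5.3}, and the surrounding discussion in \S\ref{subsec:local-cohomology} only records the Hodge-module interpretation of local cohomology that underlies it. Your argument is correct and is precisely the natural one: translate $\lcd(Y,X)\le c$ into the vanishing of $\cH^{j}(\DD\QQ_{X}^{H'})$ for $j>c-r$, detect this cohomological amplitude via the graded de Rham functors (the nontrivial direction of your detection lemma, using the lowest nonzero piece of the Hodge filtration to force $\cH^{0}\gr_{k}\DR\neq 0$, is the standard argument), and then identify $\gr_{k}\DR\,\DD\QQ_{X}^{H'}$ with the shifted $\bfR\SheafHom(\duBois_{X}^{k},\omega_{X}^{\bullet})$ using the formulas in \S\ref{section:hodge-modules}. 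The one point you leave implicit---that the range ``all $k\in\ZZ$'' in the detection lemma reduces to ``$k\ge 0$'' in the final statement---is immediate since $\duBois_{X}^{k}=0$ for $k<0$.
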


	The main tool in \cite{Mustata-Popa22:Hodge-filtration-local-cohomology} is the structure of a \textit{mixed Hodge module} on the local cohomology modules $\cH_{X}^{q}(\cO_{Y})$. Denote by $i : X \hookrightarrow Y$ the closed immersion, and by $j$ the open immersion from the complement $U = Y \setminus X$. Let $\dim X= d_{X}$, $\dim Y = d_{Y}$, and $\codim_{Y}X = c = d_{Y} - d_{X}$. Then we have an exact triangle
	$$ i\lsta i^{!} \QQ_{Y}^{H'} \to \QQ_{Y}^{H'} \to j\lsta \QQ_{U}^{H'} \xrightarrow{+1}$$
	in the derived category of Hodge modules on $Y$, and the underlying $\cD$-modules of the cohomologies of the first term in the triangle are exactly the local cohomologies:
	$$ \cH^{q}(i\lsta i^{!} \QQ_{Y}^{H'}) = \cH_{X}^{q}(\cO_{Y}).$$
    
	In this way, the sheaves $\cH_{X}^{q}(\cO_{Y})$ carry a Hodge filtration. Since we have a natural isomorphism $i^{!} \simeq \DD i\sta \DD$, we have
    $$ i^{!} \QQ_{Y}^{H'} \simeq \DD i\sta \DD (\QQ_{Y}^{H}[d_{Y}]) \simeq \DD i\sta \QQ_{Y}^{H}[d_{Y}](d_{Y}) \simeq \DD (\QQ_{X}^{H}[d_{Y}] (d_{Y})) \simeq (\DD \QQ_{X}^{H'})[-c](-d_{Y}).$$

    Therefore, we have $\cH^{q}(i^{!} \QQ_{Y}^{H'}) \simeq \cH^{q-c}(\DD \QQ_{X}^{H'})(-d_{Y})$. This also allows us to compute the local cohomological dimension purely topologically. Since the underlying perverse sheaf of $\cH_{X}^{q}(\cO_{Y})$ is exactly $\prescript{p}{}{\cH}^{q-c}(\DD \QQ_{X}[d_{X}]) \simeq \DD \left(\prescript{p}{}{\cH}^{c-q}\QQ_{X}[d_{X}] \right)$ (ignoring the Tate twists), we have
	$$\lcdef(X) = \max \{ j \in \ZZ : \prescript{p}{}{\cH}^{-j}(\QQ_{X}[d_{X}]) \neq 0\} = \max \{ j \in \ZZ : \cH^{-j} \QQ_{X}^{H'} \neq 0\}.$$
	We refer to \cite{RSW-lcdef-intermsofTopology} for more details.
	
	\section{Toric and convex geometry}\label{section:toric-varieties}
	\subsection{Toric varieties} \label{section:toric-var-basic}
	We follow \cite{Fulton-ToricVar, CoxLittleSchenck-ToricVar} for general notions of toric varieties. To a strongly convex rational polyhedral cone $\sigma$, we associate an affine toric variety $X_{\sigma} = \Spec \CC [\sigma\dual \cap M]$. In general, to a fan $\cP$, we associate a toric variety by gluing the affine toric varieties corresponding to the cones of $\cP$.
	
	Before going into toric varieties, we set up some notation for convex cones. From now on, all cones are strongly convex rational polyhedral. Let $N$ be a free abelian group of rank $n$ and let $M:= \Hom_{\ZZ}(N, \ZZ)$. Denote $N_{\RR} := N \otimes_{\ZZ} \RR$ and $M_{\RR} := M \otimes_{\ZZ} \RR$. Let $\sigma$ be a cone in $N_{\RR}$. We denote $\cP$ the collection of all faces of $\sigma$ and view $(\cP, \subseteq)$ as a graded poset. For an integer $m \in [0, n]$, we denote by
	$$ \cP_{m} = \{ \lambda \in \cP : \dim \lambda = m \}. $$
	For $\mu \in \cP$, we set
	$$ \cP^{\subset \mu} := \{\lambda \in \cP : \lambda \subset \mu\}, \quad \cP^{\supset \mu} := \{ \lambda \in \cP: \lambda \supset \mu\}.$$
	Also, we set $\cP_{m}^{\subset \mu}:= \cP_{m} \cap \cP^{\subset \mu} $ and $\cP_{m}^{\supset \mu}:= \cP_{m} \cap \cP^{\supset \mu}$.
	
	Let $\mu, \tau \in \cP$.
	\begin{enumerate}
		\item $\tau\dual := \{ u \in M_{\RR} : \langle u , v \rangle \geq 0 \text{ for all }v \in \tau \} $
		\item $\tau^{\perp} := \{ u \in M_{\RR} : \langle u , v \rangle = 0 \text{ for all }v \in \tau \} $
		\item $\tau\sta_{\circ} := (\tau^{\perp} \cap \sigma\dual \cap M) \setminus \bigcup_{\tau \subsetneq \nu} (\nu^{\perp} \cap \sigma\dual \cap M)$
		\item $\langle \tau \rangle \subset N_{\RR}$ is the subspace spanned by $\tau$
		\item $d_{\tau} := \dim_{\RR} \langle\tau\rangle$
		\item We say $\sigma$ is \textit{full-dimensional} if $d_{\sigma} = \rank_{\ZZ} N$.
		\item $\tau$ is \textit{simplicial} if the 1-dimensional faces (i.e. rays) of $\tau$ are linearly independent over $\RR$ in $N_{\RR}$.
		\item We say $\sigma$ is \textit{simple in dimension $c$} if for all $c$-dimensional faces $\tau$, the image of $\sigma$ in $N_{\RR}/ \langle \tau \rangle$ (by projection) is simplicial. We refer to \cite{Grunbaum:polytope-book}*{\S4.5} for similar generalized notions for simple polytopes.
        \item We say $\sigma$ is \textit{a cone over a simple polytope} if it is simple in dimension 1. This notation comes from the fact that a hyperplane section of $\sigma$ would be a simple polytope.
        \item We say $\sigma$ is \textit{a cone over a simplicial polytope} if every proper face $\tau \subsetneq \sigma$ is simplicial. This notation comes from the fact that a hyperplane section of $\sigma$ would be a simplicial polytope.
	\end{enumerate}
	
	\begin{rema} \label{rema:grading-and-face-relations}
		We remark that there is an order-reversing one-to-one correspondence between the faces of $\sigma$ and the faces of $\sigma\dual$ by sending $\tau$ to $\tau^{\perp} \cap \sigma^{\dual}$. We also point out that $\tau_{\circ}\sta$ gives a partition of the set $\sigma\dual \cap M$. It is straightforward to check that for $u \in \tau_{\circ}\sta$, we have
		$$ u \in \mu^{\perp} \cap \sigma\dual \cap M \quad \text{if and only if} \quad \mu \subset \tau.$$
	\end{rema}
	
	\noindent
	We briefly describe the structure of affine charts, torus-invariant closed subsets and the orbits, following \cite{Fulton-ToricVar}*{\S 3.1}. Let $X =\Spec \CC[\sigma\dual \cap M]$ be the affine toric variety associated to a cone $\sigma$. For an $r$-dimensional face $\tau$ of $\sigma$, we get an irreducible torus-invariant subvariety $S_{\tau}$ of codimension $r$ given by $ \Spec \CC[\sigma\dual \cap \tau^{\perp} \cap M]$. This is the affine toric variety corresponding to the cone $\overline{\sigma}_{\tau}$, where $\overline{\sigma}_{\tau}$ is the image of $\sigma$ under the projection map $N_{\RR} \to N_{\RR} / \langle \tau \rangle$. The lattice and the dual lattice of $S_{\tau}$ is given by
	$$ N_{\tau} := \frac{N}{N \cap \langle \tau \rangle}, \qquad M_{\tau} := M \cap \tau^{\perp}.$$
	We denote by $O_{\tau} = \Spec \CC[M_{\tau}]$ the torus orbit corresponding to $\tau$, and $U_{\tau} =\Spec \CC[\tau^{\dual} \cap M]$ the affine chart of $X$ corresponding to $\tau$. We have a diagram of torus equivariant morphisms
	$$ \begin{tikzcd}
		U_{\tau} \ar[r, hook] \ar[d]& X_{\sigma} \ar[d] \\ O_{\tau} \ar[r, hook] & S_\tau.
	\end{tikzcd}$$
	Here, the horizontal arrows are open immersions. Also, after fixing a non-canonical splitting $N = N_{\tau} \oplus (N \cap \langle\tau\rangle)$ and the corresponding splitting $M = M_{\tau} \oplus M'$, we can identify the vertical map $U_{\tau} \to O_{\tau}$ as the projection $U_{\tau} = V_{\tau} \times O_{\tau} \to O_{\tau}$, where $V_{\tau}$ is the full-dimensional toric variety $\Spec \CC[\tau\dual \cap M']$, by viewing $\tau$ as a cone in $\langle \tau \rangle \cap N_{\RR}$.
	
	We also mention that by \cite{CoxLittleSchenck-ToricVar}*{Theorem 9.2.5}, toric varieties are normal and they have rational singularities, hence Cohen--Macaulay.

	\subsection{Differential forms on toric varieties} \label{section:diff-forms-on-toric}
	We briefly discuss how differential forms work on toric varieties. Note that $M$ can be identified with the group of characters of the torus $\Hom(\bfT, \CC^{\times})$. Hence, for $u \in M$, one can associate a differential 1-form $d \log \chi^{u} := \chi^{-u} \cdot d\chi^{u}$ on $\bfT$, where $\chi^{u} : \bfT \to \CC^{\times}$ is the character corresponding to $u$. One can easily check that this transformation is additive and induces an isomorphism
	$$ M \otimes_{\ZZ} \cO_{\bfT} \simeq \Omega_{\bfT}^{1}.$$
	Let's observe how the 1-form $d \log \chi^{u}$ behaves near the torus invariant divisors. Pick a ray $\rho$ in the fan and consider the affine chart $U_{\rho} = \Spec \CC[\rho\dual \cap M]$ and the torus orbit $O_{\rho} = \Spec \CC[\rho^{\perp} \cap M]$. If $\langle u , \rho \rangle \equiv 0$, then $\chi^{u}$ is an invertible function on $U_{\rho}$. Hence, $\chi^{-u} \cdot d\chi^{u}$ is a differential 1-form on $U_{\rho}$. If $u \notin \rho^{\perp}$, then we assume that $u$ pairs positively with $\rho$, since $d\log \chi^{-u} = - d\log \chi^{u}$. Since the zero locus of $\chi^{u}$ in $U_{\rho}$ is exactly the orbit $O_{\rho}$, we see that $d\log \chi^{u}$ extends to a logarithmic differential form on the open locus of the torus-invariant divisor. Hence, we can show that these differential forms extend as logarithmic differential forms on the whole space $X$ (in a suitable sense), and give an isomorphism
	$$ M \otimes_{\ZZ} \cO_{X} \simeq \Omega_{X}\ubind{1} (\log D),$$
	where $D$ is the sum of the torus-invariant divisors. Here, $\Omega_{X}\ubind{1} (\log D) := j\lsta \Omega_{X \setminus Z}^{1} (\log D|_{X \setminus Z})$ where $Z$ is the union of codimension 2 torus-invariant subspaces, and $j : X \setminus Z \to X$ is the open inclusion. From this, we can see that
	$$ \bigwedge^{l} M \otimes_{\ZZ} \cO_{X} \simeq \Omega_{X}\ubind{l} (\log D),$$
	where $\Omega_{X}\ubind{l}(\log D)$ is defined analogously.
	
	\subsection{The intersection cohomology Hodge module on toric varieties} \label{sec:ICTV-summary}
	We briefly review the results in \cite{KV:ICTV}. Let $X$ be an $n$-dimensional affine toric variety associated to a full-dimensional cone $\sigma$. Then we consider the stalk of the intersection cohomology complex $\IC_{X}$ at the torus fixed point $x_{\sigma}$, written as a generating function as follows:
	$$ H_{\sigma}(q) = q^{n} \cdot \sum_{j} \dim \cH^{j} \left( i_{x_{\sigma}}\sta \IC_{X} \right) q^{j}. $$
	These polynomials are computable inductively, either following \cite{KV:ICTV} or \cite{Fieseler-ICprojtoric} and they only depend on the combinatorial data of $\sigma$. We point out that if $n >0$, $H_{\sigma}(q)$ is a polynomial of degree less than $n$ and only the even powers of $q$ show up in the terms. Indeed, the first part of the assertion is by \cite{dCM-Decomposition}*{\S2.1.(12)} and the second assertion is implicit in \cite{KV:ICTV}*{Theorem 1.1}. We also consider the degree zero part of the graded de Rham complex, written as a generating function as follows:
	$$ \dR_{\sigma}(K, L) = \sum_{k, l} \dim_{\CC} \cH^{l} \left( \gr_{k} \DR \IC_{X}^{H}\right)_{0} K^{k} L^{l}.$$
	Here, $\cH^{l} (\gr_{k} \DR \IC_{X}^{H})$ carries an action of the torus $T$, hence equipped with a grading by $M$. We get a finite-dimensional vector space by taking the degree zero part. In \cite{KV:ICTV}*{Theorem 1.1}, we show that the generating function $\dR_{\sigma}(K, L)$ is completely determined by the topological data $H_{\sigma}$ by the following rule:
	$$ \dR_{\sigma}(K, L) = L^{-n} H_{\sigma}(LK^{-1/2}). $$
	Note that we have a factor of $q^{n}$ in the definition of $H_{\sigma}(q)$, and this differs from the notation $\widetilde{H}_{0, \sigma}(q)$ in \cite{KV:ICTV} exactly by a factor of $q^{n}$. From the result above, we see that
	$$ \cH^{l} \left(\gr_{k} \DR \IC_{X}^{H}\right)_{0} = 0$$
	unless $l + 2k = -n$ and $l < 0$, if $X$ is positive dimensional.
	
	We also mention that the Hodge structure of the cohomologies of the stalks $\cH^{j} \left(i_{x_{\sigma}}\sta \IC_{X}^{H}\right)$ are of Hodge--Tate type.
	
	\begin{lemm} \label{lemm:stalk-of-IC-pure-HT}
		$\cH^{j} (i_{x_{\sigma}}\sta \IC_{X}^{H})$ is pure of Hodge--Tate type with weight $j +n$. Also, if $X$ is positive dimensional, this is zero unless $-n \leq j < 0$.
	\end{lemm}
	\begin{proof}
		Consider the morphism $\pi \colon \widetilde{X} \to X$ given by a barycentric subdivision of $\sigma$ as in \cite{KV:ICTV}*{Remark 2.7}. Let $E = \pi^{-1}(x_{\sigma})$ be the inverse image of the torus fixed point. We point out that $E$ is an irreducible proper simplicial toric variety of dimension $n-1$. Note that $\IC_{X}^{H}$ is a summand of $\pi\lsta \QQ_{\widetilde{X}}^{H}[n]$ by the Decomposition theorem. By proper base change (see \cite{saito1990mixedHodgemodules}*{4.4.3}), we see that $i_{x_{\sigma}}\sta \IC_{X}^{H}$ is a direct summand of $\pi\lsta \QQ_{E}^{H}[n]$. Therefore, $\cH^{j}i_{x_{\sigma}}\sta \IC_{X}^{H}$ is a direct summand of $H^{n+j}(E, \QQ_{E})$ which is pure of Hodge--Tate type of weight $n+j$, by \cite{dCMM-toricmaps}*{Theorem A}. The second assertion follows from \cite{dCM-Decomposition}*{\S2.1.(12)}.
	\end{proof}
	
	\subsection{Shelling} \label{section:shelling}
	We introduce the concept of a \textit{shelling}. While the shelling is usually considered for polytopes, we use the language of cones, since it is better for our purposes.
	
	\begin{defi}
		Let $\sigma$ be a cone of dimension $n$. Let $\cP$ be the fan associated to $\sigma$, which is the collection of all faces of $\sigma$. A \textit{shelling} of $\sigma$ is a linear ordering $\mu_{1},\ldots, \mu_{s}$ of $\cP_{n-1}$ such that either $n = 1$, or it satisfies the following condition:
		\begin{enumerate}
			\item The set of facets $\cP_{n-2}^{\subset \mu_{1}}$ of the first facet $\mu_{1}$ has a shelling.
			\item For $1 < j \leq s$,
			$$ \mu_{j} \cap \left( \bigcup_{i=1}^{j-1} \mu_{i}\right) = \lambda_1 \cup \ldots \cup \lambda_r $$
			for some shelling $\lambda_1,\ldots, \lambda_r, \ldots, \lambda_t$ of $\cP_{n-2}^{\subset \mu_{j}}$.
		\end{enumerate}
		We say a cone is \textit{shellable} if it admits a shelling.
	\end{defi}
	
	By \cite{Bruggesser-Mani:Shellable}, all cones are shellable. Indeed, the shelling of a polytope of dimension $n-1$ obtained by a suitable hyperplane section of the cone provides a shelling of the cone itself.
	
	For a cone $\sigma$ of dimension $n$ and an integer $m \leq n$, we can construct a \textit{lexicographic shelling order} on the set
	$$ \Delta = \{ (\sigma, \lambda\lind{n-1},\ldots,\lambda\lind{m}) : \sigma \supset \lambda\lind{n-1} \supset \ldots\supset \lambda\lind{m}, \dim \lambda\lind{i} = i\}. $$
	First, fix a shelling $\prec_{\sigma}$ on $\sigma$. For each chain $\sigma \supset \ldots \supset \lambda\lind{l}$, we construct a shelling order $\prec_{\sigma,\ldots,\lambda\lind{l}}$ on $\lambda\lind{l}$ that satisfies the following property:
	\begin{enumerate}
		\item $$ \lambda\lind{l} \cap \left( \bigcup_{\widetilde{\lambda}\lind{l} \prec_{\sigma,\ldots,\lambda\lind{l+1}}\lambda\lind{l}} \widetilde{\lambda}\lind{l} \right) = \mu_1 \cup \ldots \cup \mu_r $$
		where $\mu_1,\ldots, \mu_r$ are the first $r$ facets of $\lambda\lind{l}$ of the shelling $\prec_{\sigma,\ldots, \lambda\lind{l}}$.
	\end{enumerate}
	We point out that this can be constructed inductively (in a decreasing manner) on $l$. Then we denote the lexicographic order in $\Delta$ by $\prec_{\mathrm{lex}}$. Pick two different chains $(\sigma, \lambda\lind{n-1},\ldots, \lambda\lind{m})$ and $(\sigma, \lambda\lind{n-1}',\ldots, \lambda\lind{m}')$ in $\Delta$ and assume that $k$ is the largest integer such that $\lambda\lind{k} \neq \lambda\lind{k}'$. Then $$ (\sigma, \lambda\lind{n-1},\ldots,\lambda\lind{m}) \prec_{\mathrm{lex}} (\sigma, \lambda\lind{n-1}', \ldots, \lambda\lind{m}') \quad \text{if and only if} \quad \lambda\lind{k} \prec_{\sigma,\ldots,\lambda\lind{k+1}} \lambda\lind{k}'. $$
	
	\section{Ishida complex} \label{section:Ishida-complex}
	This section is devoted to treat the Ishida complex \cite{Ishida,Ishida2} from scratch, in a self-contained manner. One of the goals is to show that the Grothendieck dual $\bfR\SheafHom_{X} (\duBois_{X}^{l}, \omega_{X})$ of the reflexive differentials can be expressed in terms of a very explicit, and combinatorial chain complex in terms of the structure sheaves of various torus-invariant subspaces. We mention that our proof does not use an injective resolution of the dualizing sheaf, as in \cite{Ishida2}.
	
	Throughout this section, we fix the affine toric variety $X$ associated to a cone $\sigma$ in $N$. For $\mu \subset \tau$ faces of $\sigma$ with $d_{\tau} = d_{\mu} + 1$, we denote by $n_{\mu,\tau}$ an element in $N$ such that $\langle \cdot, n_{\mu,\tau} \rangle : M \to \ZZ$ is zero on $\tau^{\perp}\cap M$ and maps $\tau\dual \cap \mu^{\perp}\cap M$ onto $\ZZ_{\geq 0}$. Note that this element is well-defined modulo $\langle\mu\rangle \cap N$. 
	
	\begin{defi} \label{defi:V-mu-l}
		For $\mu \in \cP$ and each $d_{\mu} \leq l \leq n$, we define the vector spaces
		$$ V_{\mu}^{l} := \bigwedge^{l-d_{\mu}} \mu^{\perp}, $$
		and the map between them $\varphi_{\mu,\tau}^{l}: V_\mu^l \to V_\tau^l$ defined by 
		$$\varphi_{\mu,\tau}^{l}(u_1 \wedge u_2 \wedge \ldots \wedge u_{l-d_\mu}) = \langle u_1, n_{\mu,\tau}\rangle u_2 \wedge \ldots \wedge u_{l-d_\mu}$$
		for $u_1 \in \mu^{\perp}$ and $u_2,\ldots, u_{l-d_{\mu}}\in \tau^{\perp}$. We will often write $\varphi_{\mu, \tau}$ instead of $\varphi_{\mu, \tau}^{l}$, if the index $l$ is clear from the context. By convention, $V_{\mu}^{l} = \RR$ (or $\RR_{\mu}$) if $d_{\mu}=l$. 
	\end{defi}
	We first show a lemma.
	
	\begin{lemm} \label{lemm:anti-commutativity-complex}
		Let $\mu \in \cP_{m}$ and $\tau \in \cP_{m+2}$ with $\mu \subset \tau$. Then there exist exactly two elements $\lambda_1$ and $\lambda_2$ in $\cP_{m+1}$ such that $\mu \subset \lambda_i \subset \tau$. Furthermore, we have
		$$ \varphi_{\lambda_1, \tau}^{l} \circ \varphi_{\mu,\lambda_1}^l + \varphi_{\lambda_2, \tau}^{l} \circ \varphi_{\mu,\lambda_2}^{l} = 0. $$
	\end{lemm}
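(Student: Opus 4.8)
The plan is to recognise each map $\varphi^{l}_{\mu,\tau}$ as a restricted interior product, use this to reduce the asserted vanishing to a single identity in a two–dimensional lattice, and then finish with an orientation count in that lattice.

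\emph{The two intermediate faces.} First I would record that the faces $\lambda$ of $\sigma$ with $\mu\subseteq\lambda\subseteq\tau$ are in order–preserving bijection with the faces of the quotient cone $\overline{\tau}_{\mu}$ (the image of $\tau$ in $N_{\RR}/\langle\mu\rangle$), which is a strongly convex rational polyhedral cone of dimension $d_{\tau}-d_{\mu}=2$; a two–dimensional strongly convex cone has exactly two rays, so there are exactly two $\lambda_{1},\lambda_{2}\in\cP_{m+1}$ with $\mu\subset\lambda_{i}\subset\tau$. Moreover $\lambda_{1}\cap\lambda_{2}$ is a common proper face of the $\lambda_{i}$ containing $\mu$, hence equals $\mu$; and since $\sigma\cap\langle\lambda\rangle=\lambda$ for every face $\lambda$, the distinctness of $\lambda_{1},\lambda_{2}$ forces $\langle\lambda_{1}\rangle\cap\langle\lambda_{2}\rangle=\langle\mu\rangle$, so $\langle\lambda_{1}\rangle+\langle\lambda_{2}\rangle=\langle\tau\rangle$ and, dually, $\lambda_{1}^{\perp}\cap\lambda_{2}^{\perp}=\tau^{\perp}$, $\lambda_{1}^{\perp}+\lambda_{2}^{\perp}=\mu^{\perp}=:W$.

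\emph{Reduction to a $2$-dimensional identity.} Because $n_{\mu,\tau}$ annihilates $\tau^{\perp}$, the interior product $\iota_{n_{\mu,\tau}}\colon\textstyle\bigwedge^{\bullet}M_{\RR}\to\textstyle\bigwedge^{\bullet-1}M_{\RR}$ agrees with the defining formula of $\varphi^{l}_{\mu,\tau}$ on the spanning wedges $u_{1}\wedge\cdots\wedge u_{l-d_{\mu}}$ ($u_{1}\in\mu^{\perp}$, $u_{2},\dots,u_{l-d_{\mu}}\in\tau^{\perp}$) and carries $\textstyle\bigwedge^{l-d_{\mu}}\mu^{\perp}$ into $\textstyle\bigwedge^{l-d_{\tau}}\tau^{\perp}$, so $\varphi^{l}_{\mu,\tau}=\iota_{n_{\mu,\tau}}\big|_{V^{l}_{\mu}}$ (this also re-proves that $\varphi^{l}_{\mu,\tau}$ is well defined and independent of the representative $n_{\mu,\tau}$ modulo $\langle\mu\rangle\cap N$, which acts trivially on $\textstyle\bigwedge^{\bullet}\mu^{\perp}$). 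Restricted to $W$, all four of $n_{\mu,\lambda_{i}},n_{\lambda_{i},\tau}$ kill $\tau^{\perp}$ and hence descend to integral functionals $\bar n_{\mu,\lambda_{i}},\bar n_{\lambda_{i},\tau}$ on $W/\tau^{\perp}$; their exterior products lie in $\textstyle\bigwedge^{2}N_{V}$ for the lattice $N_{V}:=(N\cap\langle\tau\rangle)/(N\cap\langle\mu\rangle)$ of the $2$-dimensional space $V:=\langle\tau\rangle/\langle\mu\rangle\simeq(W/\tau^{\perp})^{*}$. Splitting $\textstyle\bigwedge^{\bullet}W=\textstyle\bigwedge^{\bullet}\tau^{\perp}\otimes\textstyle\bigwedge^{\bullet}(W/\tau^{\perp})$ and using $\iota_{a}\iota_{b}=-\iota_{b}\iota_{a}$ together with $\iota_{\bar n_{\mu,\lambda_{i}}}^{2}=0$ (which absorbs the indeterminacy of $n_{\lambda_{i},\tau}$ modulo $\langle\lambda_{i}\rangle$, that indeterminacy descending to a multiple of $\bar n_{\mu,\lambda_{i}}$), I get $\varphi^{l}_{\lambda_{i},\tau}\circ\varphi^{l}_{\mu,\lambda_{i}}=\iota_{\bar n_{\lambda_{i},\tau}}\iota_{\bar n_{\mu,\lambda_{i}}}=\operatorname{id}_{\bigwedge^{\bullet}\tau^{\perp}}\otimes\bigl(\text{contraction against }\bar n_{\mu,\lambda_{i}}\wedge\bar n_{\lambda_{i},\tau}\bigr)$. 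Thus the claimed vanishing is equivalent to the single identity $\bar n_{\mu,\lambda_{1}}\wedge\bar n_{\lambda_{1},\tau}=-\,\bar n_{\mu,\lambda_{2}}\wedge\bar n_{\lambda_{2},\tau}$ in $\textstyle\bigwedge^{2}N_{V}$.

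\emph{The orientation count.} In $V$ with lattice $N_{V}$, the cone $\overline{\tau}_{\mu}$ has primitive ray generators $r_{i}:=\bar n_{\mu,\lambda_{i}}$ — exactly the normalization ``$\langle\cdot,n_{\mu,\lambda_{i}}\rangle$ maps $\lambda_{i}^{\vee}\cap\mu^{\perp}\cap M$ onto $\ZZ_{\geq0}$'' — while $\bar n_{\lambda_{i},\tau}$ reduces modulo $\ZZ r_{i}$ to the primitive generator of the image of $\overline{\tau}_{\mu}$ in $N_{V}/\ZZ r_{i}$, by the analogous normalization of $n_{\lambda_{i},\tau}$; in particular $\{r_{i},\bar n_{\lambda_{i},\tau}\}$ is a $\ZZ$-basis of $N_{V}$ for $i=1,2$. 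Fix the generator $\omega$ of $\textstyle\bigwedge^{2}N_{V}$ with $r_{1}\wedge r_{2}\in\RR_{>0}\,\omega$. The image of $\overline{\tau}_{\mu}=\RR_{\geq0}r_{1}+\RR_{\geq0}r_{2}$ in $N_{V}/\ZZ r_{1}$ is $\RR_{\geq0}\overline{r_{2}}$, and the isomorphism $N_{V}/\ZZ r_{1}\xrightarrow{\ \sim\ }\textstyle\bigwedge^{2}N_{V}$, $\overline v\mapsto r_{1}\wedge v$, sends $\overline{r_{2}}$ to $r_{1}\wedge r_{2}\in\RR_{>0}\,\omega$, so the primitive generator $\bar n_{\lambda_{1},\tau}$ goes to $\omega$, i.e. $r_{1}\wedge\bar n_{\lambda_{1},\tau}=\omega$. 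Running the same argument with the roles of the two rays exchanged uses the isomorphism $\overline v\mapsto r_{2}\wedge v$, which sends $\overline{r_{1}}$ to $r_{2}\wedge r_{1}\in\RR_{<0}\,\omega$, so $r_{2}\wedge\bar n_{\lambda_{2},\tau}=-\omega$. Since $r_{i}=\bar n_{\mu,\lambda_{i}}$, this gives $\bar n_{\mu,\lambda_{1}}\wedge\bar n_{\lambda_{1},\tau}=\omega=-\,\bar n_{\mu,\lambda_{2}}\wedge\bar n_{\lambda_{2},\tau}$, which is the required identity.

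\emph{Expected main obstacle.} The real content, and the only nonformal step, is this orientation count: the two $\ZZ$-bases $\{r_{1},\bar n_{\lambda_{1},\tau}\}$ and $\{r_{2},\bar n_{\lambda_{2},\tau}\}$ induce \emph{opposite} orientations of $N_{V}$ — which is precisely what makes the $\varphi$'s anticommute and hence the Ishida complex a genuine complex — and it traces back to the convex-geometric fact that $\overline{\tau}_{\mu}$ lies between its two rays. Carrying it out cleanly requires the somewhat delicate lattice bookkeeping identifying $N'=N/(N\cap\langle\mu\rangle)$, $N_{\lambda_{i}}=N/(N\cap\langle\lambda_{i}\rangle)$, $N_{V}$, and $N_{V}/\ZZ r_{i}$ compatibly, so that each primitivity clause in the definition of the elements $n_{\mu,\tau}$ becomes one of the two $\ZZ$-basis statements used above; everything else is formal manipulation of interior products.
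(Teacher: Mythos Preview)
Your proof is correct and follows the same overall strategy as the paper: reduce to the two-dimensional quotient $V=\langle\tau\rangle/\langle\mu\rangle$ and verify the identity there. The paper's proof is extremely terse --- it simply asserts the reduction, chooses coordinates so that $\tau$ is spanned by $e_{1}$ and $e_{1}+te_{2}$, and declares the rest ``a direct calculation.''

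Your version is more explicit and more conceptual in two respects. First, you make the reduction mechanism transparent by recognizing $\varphi^{l}_{\mu,\tau}$ as the restriction of the interior product $\iota_{n_{\mu,\tau}}$, so the composite becomes contraction against a bivector in $\bigwedge^{2}N_{V}$; this cleanly explains why the $\bigwedge^{\bullet}\tau^{\perp}$-factor is inert and why the question is purely two-dimensional. Second, rather than computing in a chosen basis, you give a coordinate-free orientation argument: each pair $\{r_{i},\bar n_{\lambda_{i},\tau}\}$ is a $\ZZ$-basis of $N_{V}$, and the two induce opposite orientations because the image of $\overline{\tau}_{\mu}$ in $N_{V}/\ZZ r_{i}$ points toward $\overline{r_{j}}$, which under $r_{i}\wedge(-)$ lands in $\pm\omega$ according to whether $(i,j)=(1,2)$ or $(2,1)$. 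This is exactly what the paper's coordinate computation verifies (with $r_{1}\wedge n_{\lambda_{1},\tau}=e_{1}\wedge e_{2}$ and $r_{2}\wedge n_{\lambda_{2},\tau}=(e_{1}+te_{2})\wedge(e_{1}+(t-1)e_{2})=-e_{1}\wedge e_{2}$), but your formulation makes clear that the sign is forced by convexity rather than an accident of the chosen coordinates.
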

	
	\begin{proof}
		The statement easily reduces to the 2-dimensional computation. Hence, assume $\mu = 0$ and that $\tau$ is spanned by $e_{1}$ and $e_{1} + t e_{2}$ for a basis $e_{1}, e_{2}$ of $\ZZ^{2}$ for some $t > 0$. Then $\lambda_{1}$ and $\lambda_{2}$ are spanned by $e_{1}$ and $e_{1}+ te_{2}$ respectively. It is then a direct calculation.
	\end{proof}
	
	\begin{defi} \label{defi:Ishida-complex}
		We define the \textit{$l$-th Ishida complex} $\Ish_{X}^{l}$ which is a complex of coherent sheaves on $X$. The Ishida complex is given by
		$$ \Ish_{X}^{l}: V_{0}^{l} \otimes_{\RR} \cO_{X} \to \bigoplus_{\mu \in \cP_{1}} V_{\mu}^{l} \otimes_{\RR} \cO_{S_{\mu}} \to \ldots \to \bigoplus_{\mu \in \cP_{l}} V_{\mu}^{l} \otimes_{\RR} \cO_{S_{\mu}}, $$
        living in cohomological degrees 0 to $l$. We describe the maps of the complex. If $\mu \in \cP_{m}$ and $\tau \in \cP_{m+1}$ with $\mu \subset \tau$, then the morphism $V_{\mu}^{l} \otimes \cO_{S_{\mu}} \to V_{\tau}^{l} \otimes \cO_{S_{\tau}}$ is given by the map $\varphi_{\mu,\tau}^{l} : V_{\mu}^{l} \to V_{\tau}^{l}$ and the restriction map $\cO_{S_{\mu}} \to \cO_{S_{\tau}}$. Otherwise, the map is zero. By Lemma \ref{lemm:anti-commutativity-complex}, we see that $\Ish_{X}^{l}$ is indeed a complex. 
	\end{defi}
	
	\begin{rema} \label{rema:things-btn-real-and-complex}
		We point out that the Ishida complex carries a natural grading by $M$. The graded pieces are complexes of finite dimensional complex vector spaces, whose terms are direct sums of $V_{\mu}^{l} \otimes_{\RR} \CC$. However, we are interested in the vanishing and non-vanishing of these complexes which makes the effect of the tensor by $\CC$ vacuous. Hence, we will sometime say the \textit{degree $u$-part} as the complex of real vectors spaces without explicitly mentioning the base change by $\CC$.
	\end{rema}
	For future references, we reserve the notation for the degree zero part of this complex.
	\begin{defi} \label{defi:Ishida-degree-0}
		We denote by
		$$ \Ish_{\sigma}^{l} : V_{0}^{l} \to \bigoplus_{\mu \in \cP_{1}} V_{\mu}^{l} \to \ldots \to \bigoplus_{\mu \in \cP_{l}} V_{\mu}^{l}.$$
		One can easily see that this is the degree 0 part of $\Ish_{X}^{l}$.
	\end{defi}

    Using this notation, we also describe the other graded pieces of the Ishida complex.
	\begin{lemm} \label{lemm:grade-parts-of-Ishida-complex}
		Let $u \in \tau_{\circ}\sta$ for some $\tau \in \cP$. Then the degree $u$-part of the Ishida complex $\Ish_{X}^{l}$ is isomorphic to
		$$ \bigoplus_{j=0}^{l} \bigwedge^{j} \tau^{\perp} \otimes \Ish_{\tau}^{l-j}  $$
		with the convention that $\Ish_{\tau}^{j}$ and $\bigwedge^{j} \tau^{\perp}$ is zero if $j > d_{\tau}$. Here, we view $\tau$ as a full-dimensional cone in $\langle \tau \rangle$.
	\end{lemm}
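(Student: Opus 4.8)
The plan is to compute the degree-$u$ piece of $\Ish_X^l$ directly from its $M$-grading, recognize it as an explicit combinatorial complex, and then match it with $\bigoplus_{j}\bigwedge^j\tau^\perp\otimes\Ish_\tau^{l-j}$; the only nontrivial point is checking that the Ishida differentials are compatible with a linear splitting of the spaces $\mu^\perp$. First I would pin down the degree-$u$ piece. By Remark~\ref{rema:things-btn-real-and-complex} the complex $\Ish_X^l$ is $M$-graded, and for each face $\mu$ the sheaf $\cO_{S_\mu}=\CC[\sigma\dual\cap\mu^\perp\cap M]$ has $u$-graded piece $\CC$ precisely when $u\in\sigma\dual\cap\mu^\perp\cap M$, the restriction maps $\cO_{S_\mu}\to\cO_{S_\nu}$ being the identity in those degrees where both sides are nonzero. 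Since $u\in\tau\sta_{\circ}$, Remark~\ref{rema:grading-and-face-relations} gives $u\in\mu^\perp\cap\sigma\dual\cap M$ if and only if $\mu\subset\tau$. Hence the degree-$u$ piece of $\Ish_X^l$ is the complex $C^\bullet$ with $C^m=\bigoplus_{\mu\in\cP_m^{\subset\tau}}V_\mu^l$ and differential $\bigoplus\varphi_{\mu,\nu}^l$ taken over pairs $\mu\subset\nu$ of faces of $\tau$ with $d_\nu=d_\mu+1$. It remains to identify $C^\bullet$ with $\bigoplus_{j=0}^l\bigwedge^j\tau^\perp\otimes\Ish_\tau^{l-j}$.

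Next I would fix a linear projection $p\colon M_\RR\twoheadrightarrow\tau^\perp$ with $p|_{\tau^\perp}=\id$ and put $W_0:=\ker p$, so $M_\RR=\tau^\perp\oplus W_0$. As $\tau^\perp\subset\mu^\perp$ for every face $\mu$ of $\tau$, this yields $\mu^\perp=\tau^\perp\oplus(\mu^\perp\cap W_0)$ with $\mu^\perp\cap W_0\xrightarrow{\sim}\mu^\perp/\tau^\perp$, compatibly with the inclusions $\nu^\perp\subset\mu^\perp$ for $\mu\subset\nu$. Identifying $\langle\tau\rangle\dual$ with $M_\RR/\tau^\perp$, the space attached to a face $\mu$ of the full-dimensional cone $\tau\subset\langle\tau\rangle$ that appears in $\Ish_\tau^{k}$ is $\bigwedge^{k-d_\mu}(\mu^\perp/\tau^\perp)$. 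Taking exterior powers of the splitting gives, for each $\mu\in\cP_m^{\subset\tau}$,
\[ V_\mu^l=\bigwedge^{l-d_\mu}\mu^\perp\;\simeq\;\bigoplus_{j\ge 0}\bigwedge^{j}\tau^\perp\otimes\bigwedge^{(l-j)-d_\mu}\bigl(\mu^\perp/\tau^\perp\bigr), \]
and summing over $\mu\in\cP_m^{\subset\tau}$ produces a degreewise isomorphism $\psi\colon\bigoplus_{j}\bigwedge^{j}\tau^\perp\otimes\Ish_\tau^{l-j}\to C^\bullet$ (summands with a vanishing tensor factor are discarded, which accounts for the stated conventions).

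The last and main step is to check that $\psi$ respects differentials. First, for faces $\mu\subsetneq\nu$ of $\tau$ with $d_\nu=d_\mu+1$, the element $n_{\mu,\nu}\in N$ can be chosen in $N\cap\langle\tau\rangle$, since $\langle\cdot,n_{\mu,\nu}\rangle$ vanishes on $\nu^\perp\cap M\supset\tau^\perp\cap M$, hence on $\tau^\perp$; with this choice $n_{\mu,\nu}$ also satisfies the defining conditions of the corresponding element for the cone $\tau\subset\langle\tau\rangle$ (the cone $\nu\dual\cap\mu^\perp$ and its image in $M_\RR/\tau^\perp$ carry the same positive functional), so the maps $\varphi_{\mu,\nu}^{l-j}$ of $\Ish_\tau^{l-j}$ may be built from the same $n_{\mu,\nu}$. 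A direct computation on a decomposable element $\eta\wedge\zeta\in V_\mu^l$ with $\eta\in\bigwedge^j\tau^\perp$ and $\zeta\in\bigwedge(\mu^\perp\cap W_0)$ — splitting $\zeta$ according to $\mu^\perp\cap W_0=\RR w_0\oplus(\nu^\perp\cap W_0)$, where $\langle w_0,n_{\mu,\nu}\rangle\ne 0$ — then yields $\varphi_{\mu,\nu}^l\circ\psi_j=(-1)^j\,\psi_j\circ(\id\otimes\varphi_{\mu,\nu}^{l-j})$ on the $j$-th summand, the sign $(-1)^j$ coming only from moving $w_0$ past $\eta$. Since this sign is independent of $\mu$ and $\nu$ while $\varphi_{\mu,\nu}^l$ raises cohomological degree by one, rescaling $\psi_j$ by $(-1)^{jm}$ in cohomological degree $m$ turns $\psi$ into an isomorphism of complexes, which is the assertion. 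The hard part is precisely this compatibility: one must use the same $n_{\mu,\nu}$ for both complexes and carefully track the Koszul-type sign introduced by the splitting. Once that is in place, the rest is the standard fact that a short exact sequence of vector spaces splits its exterior powers, applied uniformly to $0\to\tau^\perp\to\mu^\perp\to\mu^\perp/\tau^\perp\to 0$ over all faces $\mu\subset\tau$.
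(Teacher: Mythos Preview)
Your argument is correct and follows essentially the same approach as the paper: identify which $\cO_{S_\mu}$ contribute in degree $u$ via Remark~\ref{rema:grading-and-face-relations}, fix a splitting $M_\RR\simeq\tau^\perp\oplus\overline V$, and decompose each $V_\mu^l$ accordingly. The paper simply asserts that ``since the splittings are compatible, the degree $u$-part splits into the desired formula,'' whereas you go further and verify that the differentials $\varphi_{\mu,\nu}^l$ respect the decomposition up to the sign $(-1)^j$, and then absorb that sign by rescaling; this extra care is justified, since $\varphi_{\mu,\nu}^l$ is the interior product with $n_{\mu,\nu}$ and the Leibniz rule $\iota_n(\eta\wedge\zeta)=(-1)^{|\eta|}\eta\wedge\iota_n\zeta$ (using $\iota_n\eta=0$ for $\eta\in\bigwedge\tau^\perp$) is exactly what produces your sign.
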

	\begin{proof}
		Note that $\cO_{S_{\mu}} = \Spec \CC[\sigma\dual\cap  \mu^{\perp}\cap M]$ has non-trivial degree $u$-part if and only if $\mu \subset \tau$ by Remark \ref{rema:grading-and-face-relations}. Therefore, the degree $u$-part of $\Ish_{X}^{l}$ is
		$$ V_{0}^{l} \to \bigoplus_{\mu \in \cP_{1}^{\subset \tau}} V_{\mu}^{l} \to \ldots \to \bigoplus_{\mu \in \cP_{l}^{\subset \tau}} V_{\mu}^{l}.$$
		Let $\overline{V} := V/\tau^{\perp} \simeq \Hom_{\RR}(\langle \tau \rangle , \RR)$ and fix a splitting $V \simeq \tau^{\perp} \oplus \overline{V}$. This splitting induces compatible splittings
		$$ \mu^{\perp} \simeq \tau^{\perp} \oplus \mu_{\tau}^{\perp}, $$
		where $\mu_{\tau}^{\perp} \subset \overline{V}$ is the orthogonal complement of $\mu$ viewed as a subspace of $\langle \tau \rangle$. Hence, for each $\mu$, we have the decomposition
		$$ V_{\mu}^{l} \simeq \bigwedge^{l - d_{\mu}} \mu^{\perp} \simeq \bigoplus_{j=0}^{l-d_{\mu}} \bigwedge^{j} \tau^{\perp} \otimes \bigwedge^{l- d_{\mu} - j} \mu_{\tau}^{\perp} \simeq \bigwedge^{j} \tau^{\perp} \otimes \overline{V}_{\mu}^{l -j}, $$
		where $\overline{V}_{\mu}^{l-j}$ is the vector space $V_{\mu}^{l-j}$ defined in Definition \ref{defi:V-mu-l}, but, by viewing $\mu$ as a subspace of $\langle \tau \rangle$. Since the splittings are compatible, we see that the degree $u$-part splits into the desired formula above.
	\end{proof}
	
	We first describe the cohomology of the Ishida complex at cohomological degree zero.
	\begin{prop} \label{prop:zeroth-coho-is-reflexivediff}
		The zero-th cohomology $\cH^{0}(\Ish_{X}^{l})$ of the Ishida complex is isomorphic to $\duBois_{X}^l$.
	\end{prop}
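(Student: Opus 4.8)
The plan is to identify the first differential $d^{0}$ of $\Ish_{X}^{l}$ with a sum of Poincar\'e residue maps, and then deduce the statement from the classical residue exact sequence on the smooth locus, promoting it from a big open subset to all of $X$ by a codimension-two argument.

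First I would unwind the degree-$0$ and degree-$1$ terms of $\Ish_{X}^{l}$. Since $d_{0}=0$ and $V_{0}^{l}=\bigwedge^{l}M_{\RR}$, the discussion in \S\ref{section:diff-forms-on-toric} identifies the source $V_{0}^{l}\otimes_{\RR}\cO_{X}$ canonically with $\bigwedge^{l}M\otimes_{\ZZ}\cO_{X}\simeq\Omega_{X}\ubind{l}(\log D)$, where $D=\sum_{\rho\in\cP_{1}}D_{\rho}$ and $D_{\rho}=S_{\rho}$ is the torus-invariant divisor attached to the ray $\rho$. For a ray $\rho$ we have $V_{\rho}^{l}=\bigwedge^{l-1}\rho^{\perp}=\bigwedge^{l-1}(M_{\rho})_{\RR}$, and the same discussion applied to the affine toric variety $S_{\rho}$, with character lattice $M_{\rho}=M\cap\rho^{\perp}$, identifies $V_{\rho}^{l}\otimes_{\RR}\cO_{S_{\rho}}$ with $\Omega_{S_{\rho}}\ubind{l-1}(\log D_{S_{\rho}})$, where $D_{S_{\rho}}$ is the toric boundary of $S_{\rho}$. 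Under these identifications I claim $d^{0}=\bigoplus_{\rho}\Res_{D_{\rho}}$: after choosing a splitting $M_{\RR}=\rho^{\perp}\oplus\RR w$ with $\langle w,n_{0,\rho}\rangle=1$, the map $\varphi_{0,\rho}^{l}$ is exactly contraction against $n_{0,\rho}$ followed by restriction to $S_{\rho}$, and a one-line computation at the generic point of $D_{\rho}$ (where $\chi^{w}$ is a uniformizer, since $\operatorname{ord}_{D_{\rho}}\chi^{w}=\langle w,n_{0,\rho}\rangle=1$) shows that this is precisely the Poincar\'e residue of $d\log\chi^{u_{1}}\wedge\cdots\wedge d\log\chi^{u_{l}}$ along $D_{\rho}$, with matching signs.

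Next I would restrict to the big open subset $U:=X\setminus\bigcup_{\dim\tau\geq 2}O_{\tau}$. Every face of $\sigma$ of dimension $\leq 1$ is a smooth cone, so $U$ is smooth; its complement is a finite union of orbits of codimension $\geq 2$; and $D|_{U}=\bigsqcup_{\rho}O_{\rho}$ is a disjoint union of smooth divisors, while $D_{S_{\rho}}|_{U}=\varnothing$. Since restriction to $U$ is exact, $\cH^{0}(\Ish_{X}^{l})|_{U}=\ker\big(\Omega_{U}^{l}(\log D|_{U})\xrightarrow{\Res}\bigoplus_{\rho}\Omega_{O_{\rho}}^{l-1}\big)$, and by the classical residue exact sequence for the smooth normal crossing divisor $D|_{U}\subset U$ this kernel equals $\Omega_{U}^{l}=\Omega_{X}\ubind{l}|_{U}$.

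Finally I would upgrade this to a global identification inside the locally free sheaf $\cE:=\bigwedge^{l}M\otimes\cO_{X}$. On one hand, any local section $s$ of the subsheaf $\Omega_{X}\ubind{l}\subset\cE$ is killed by $d^{0}$: its image $d^{0}(s)$ vanishes on $U$ by the previous step, hence is supported on $X\setminus U$; but each summand $(i_{\rho})_{*}\cO_{S_{\rho}}$ of the target is torsion-free along its irreducible support $S_{\rho}$, and $(X\setminus U)\cap S_{\rho}$ is a proper closed subset of $S_{\rho}$, so $d^{0}(s)=0$; thus $\Omega_{X}\ubind{l}\subseteq\cK:=\cH^{0}(\Ish_{X}^{l})=\ker d^{0}$. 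On the other hand, $\cK\subset\cE$ is torsion-free, so the natural map $\cK\to j_{*}(\cK|_{U})$ (with $j\colon U\hookrightarrow X$) is injective; since $\cK|_{U}=\Omega_{U}^{l}$ and $\Omega_{X}\ubind{l}$ is reflexive with $X$ normal, we get $j_{*}(\cK|_{U})=j_{*}(\Omega_{X}\ubind{l}|_{U})=\Omega_{X}\ubind{l}$, whence $\cK\subseteq\Omega_{X}\ubind{l}$. The two inclusions give $\cH^{0}(\Ish_{X}^{l})\simeq\Omega_{X}\ubind{l}$. The only genuinely delicate point is the first step, namely pinning down that $d^{0}$ is the sum of residue maps and that the target terms $V_{\rho}^{l}\otimes\cO_{S_{\rho}}$ match $\Omega_{S_{\rho}}\ubind{l-1}(\log D_{S_{\rho}})$; once that is in place the rest is the standard codimension-two bookkeeping. (One could instead verify directly that $\cH^{0}(\Ish_{X}^{l})$ is reflexive, using the $S_{2}$ property of $\cE$ and torsion-freeness of the $\cO_{S_{\rho}}$, but as above this is not needed.)
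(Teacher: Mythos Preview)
Your proof is correct and takes a genuinely different route from the paper's. Both arguments start from the identification $V_{0}^{l}\otimes\cO_{X}\simeq\Omega_{X}\ubind{l}(\log D)$, but diverge after that. The paper works degree by degree in the $M$-grading: for each homogeneous log form $u_{1}\wedge\cdots\wedge u_{l}\otimes\chi^{\alpha}$ it checks directly when the form extends holomorphically across each $D_{\rho}$, and observes that this happens precisely when the wedge lies in $\bigwedge^{l}\rho^{\perp}=\ker(\varphi_{0,\rho}^{l})$ whenever $\alpha\in\rho^{\perp}$; this is exactly the condition of lying in $\ker d^{0}$. You instead interpret $d^{0}$ as the sum of Poincar\'e residues, use the classical residue exact sequence on the smooth codimension-two open $U$, and then extend by reflexivity.

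The paper's approach stays entirely inside the $M$-graded bookkeeping that drives the rest of \S\ref{section:Ishida-complex}; in particular the proof of Lemma~\ref{lemm:dimension-graded-of-reflex-diff} (the dimension of each graded piece of $\Omega_{X}\ubind{l}$) is essentially a byproduct. Your approach is more sheaf-theoretic and conceptual: it explains the identity as a consequence of the familiar residue sequence for log forms, and the argument template (identify the first differential with residues, apply the residue sequence on a big smooth open, push forward by reflexivity) transfers to other normal varieties with a boundary divisor without needing a torus action or a grading. The only point that requires care in your version is the claim that $d^{0}$ really is the residue map; your sketch of this is correct, since $\varphi_{0,\rho}^{l}$ is literally contraction with $n_{0,\rho}$ and $\chi^{w}$ is a uniformizer along $D_{\rho}$.
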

	\begin{proof}
		We use the identification $\Omega_{X}\ubind{l}(\log D) \simeq \bigwedge^{l} M \otimes_{\ZZ} \cO_{X}$ explained in \S\ref{section:diff-forms-on-toric}, where $D$ is the sum of all torus-invariant divisors. Since everything is torus equivariant, we exploit the grading by $M$. Therefore, it is enough to check when $u_{1}\wedge \ldots \wedge u_{l} \otimes \chi^{\alpha}$ is in $\duBois_{X}^l$ for $\alpha \in \sigma\dual \cap M$. Fix a torus-invariant divisor $S_{\rho}$ corresponding to a ray $\rho$. We see that $n_{0, \rho} \in N$ is the primitive element in $\rho$. One can assume that $u_{2},\ldots, u_{l} \in \rho^{\perp}$. If $u_{1} \in \rho^{\perp}$, then
		$$ d \log \chi^{u_1} \wedge \ldots \wedge d \log \chi^{u_{l}}$$
		is already in $\duBois_{X}^l$ in a neighborhood of $O_{\rho}$. If $u_{1} \notin \rho^{\perp}$, then we should require $\langle \alpha, n_{0, \rho} \rangle > 0$ in order to clear out the denominators. This exactly says that in order for the logarithmic differential form corresponding to $u_{1}\wedge \ldots \wedge u_{l} \otimes \chi^{\alpha}$ being inside $\duBois_{X}^l$ in a neighborhood of $O_{\rho}$, the vector $u_{1}\wedge \ldots \wedge u_{l}$ has to lie inside $\bigwedge^{l} \rho^{\perp}$ whenever $\alpha \in \rho^{\perp}$. This shows that $\duBois_{X}^l$ is exactly the kernel of the natural map
		$$ \bigwedge^{l} M_{\RR} \otimes \cO_{X} \to \bigoplus_{\rho \in \cP_{1}} V_{\rho}^{l} \otimes \cO_{S_{\rho}} $$
		by noticing that the kernel of $V_{0}^{l} \to V_{\rho}^{l}$ is exactly $\bigwedge^{l} \rho^{\perp}$.
	\end{proof}
	
	Given this, we can count the dimension of the graded pieces of $\duBois_{X}^l$ easily.
	\begin{lemm} \label{lemm:dimension-graded-of-reflex-diff}
		Let $u \in \tau_{\circ}\sta$. Then,
		$$ \dim_{\CC} \left( \duBois_{X}^l \right)_{u} = {n - d_\tau \choose l}.$$
	\end{lemm}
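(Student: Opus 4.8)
The plan is to compute the degree-$u$ part of the zeroth cohomology of the Ishida complex and read off its dimension. By Proposition \ref{prop:zeroth-coho-is-reflexivediff}, $\Omega_{X}\ubind{l} \simeq \cH^{0}(\Ish_{X}^{l})$, and this isomorphism is $T$-equivariant, so taking the degree-$u$ component (which is exact) gives $(\Omega_{X}\ubind{l})_{u} \simeq \cH^{0}\bigl((\Ish_{X}^{l})_{u}\bigr)$. First I would identify the relevant part of the complex: since $u \in \tau_{\circ}\sta \subseteq \sigma\dual \cap M$, the character $\chi^{u}$ is a regular function on $X$, so the degree-$u$ part of $V_{0}^{l} \otimes_{\RR} \cO_{X}$ is $V_{0}^{l} = \bigwedge^{l} M_{\RR}$; and $\cO_{S_{\rho}} = \CC[\sigma\dual \cap \rho^{\perp} \cap M]$ has a nonzero degree-$u$ part precisely when $u \in \rho^{\perp}$, which by Remark \ref{rema:grading-and-face-relations} happens if and only if $\rho \subseteq \tau$. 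Hence, by the description of $\Omega_{X}\ubind{l}$ as a kernel in the proof of Proposition \ref{prop:zeroth-coho-is-reflexivediff},
$$ (\Omega_{X}\ubind{l})_{u} = \bigcap_{\rho \in \cP_{1}^{\subset \tau}} \ker \varphi_{0, \rho}^{l}, $$
the intersection being taken inside $V_{0}^{l} = \bigwedge^{l} M_{\RR}$.

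The remaining task is purely linear algebra. Directly from Definition \ref{defi:V-mu-l}, the map $\varphi_{0, \rho}^{l} \colon \bigwedge^{l} M_{\RR} \to V_{\rho}^{l} = \bigwedge^{l-1} \rho^{\perp}$ is contraction against $n_{0, \rho}$ (the primitive generator of the ray $\rho$), so $\ker \varphi_{0, \rho}^{l} = \bigwedge^{l} \rho^{\perp}$. It then remains to show $\bigcap_{\rho \in \cP_{1}^{\subset \tau}} \bigwedge^{l} \rho^{\perp} = \bigwedge^{l} \tau^{\perp}$; the inclusion $\supseteq$ is clear. For $\subseteq$, since $\sigma$ (hence $\tau$) is strongly convex, $\tau$ is the conical hull of its rays, so the generators $n_{0, \rho}$, for $\rho \in \cP_{1}^{\subset \tau}$, span $\langle \tau \rangle$. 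Fixing a splitting $M_{\RR} = \tau^{\perp} \oplus \langle \tau \rangle\dual$, so that $\bigwedge^{l} M_{\RR} = \bigoplus_{a+b = l} \bigwedge^{a} \tau^{\perp} \otimes \bigwedge^{b} \langle \tau \rangle\dual$, each contraction $\iota_{n_{0, \rho}}$ acts trivially on the first tensor factor, and a nonzero element of $\bigwedge^{b} \langle \tau \rangle\dual$ with $b \geq 1$ cannot be annihilated by contraction against a set spanning $\langle \tau \rangle$; hence only the $b = 0$ summand survives, giving $\bigwedge^{l} \tau^{\perp}$. Since $\dim_{\RR} \tau^{\perp} = n - d_{\tau}$, we conclude $\dim_{\CC}(\Omega_{X}\ubind{l})_{u} = {n - d_{\tau} \choose l}$.

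The computation is essentially bookkeeping, and the only step that needs a genuine argument is the last intersection identity, which rests on the elementary fact that the rays of a strongly convex cone $\tau$ span $\langle \tau \rangle$. Alternatively, one can route the same calculation through Lemma \ref{lemm:grade-parts-of-Ishida-complex}: it identifies $(\Ish_{X}^{l})_{u} \simeq \bigoplus_{j=0}^{l} \bigwedge^{j} \tau^{\perp} \otimes \Ish_{\tau}^{l-j}$, and since $\cH^{0}(\Ish_{\tau}^{m})$ is the degree-zero part of $\Omega_{V_{\tau}}\ubind{m}$, which is $\RR$ for $m = 0$ and $0$ for $m \geq 1$ (as $\tau$ is full-dimensional in $\langle \tau \rangle$, so there are no nonzero torus-invariant reflexive $m$-forms on $V_{\tau}$ when $m \geq 1$, by the same kernel computation), only the $j = l$ term survives and we again obtain $\bigwedge^{l} \tau^{\perp}$.
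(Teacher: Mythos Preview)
Your proof is correct and follows essentially the same route as the paper's own argument: identify the degree-$u$ piece of $\Omega_X\ubind{l}$ as the kernel of $V_0^l \to \bigoplus_{\rho \in \cP_1^{\subset \tau}} V_\rho^l$, recognize each factor's kernel as $\bigwedge^l \rho^\perp$, and conclude that the intersection is $\bigwedge^l \tau^\perp$. The paper's proof is just a terser version of what you wrote (three sentences, taking the intersection identity for granted), so your added justification for $\bigcap_\rho \bigwedge^l \rho^\perp = \bigwedge^l \tau^\perp$ and the alternative route via Lemma~\ref{lemm:grade-parts-of-Ishida-complex} are welcome elaborations rather than a different approach.
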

	\begin{proof}
		The degree $u$ part of $\duBois_{X}^l$ is exactly the kernel of the map $V_{0}^{l} \to \bigoplus_{\rho \in \cP_{1}^{\subset\tau}} V_{\rho}^{l}$. This equals $ \bigcap_{\rho \in \cP_{1}^{\subset\tau}} \bigwedge^{l} \rho^{\perp} = \bigwedge^{l} \tau^{\perp}$. Therefore, we get the formula above.
	\end{proof}
	
	The first step is to show that the $n$-th Ishida complex agrees with the dualizing sheaf $\omega_{X}$. Before that, we give some result on exactness of certain complexes.
	\begin{lemm} \label{lemm:exactness-for-Ish-n}
		Let $\sigma$ be a cone in $N \simeq \ZZ^{n}$. Let $\prec$ be a shelling order on $\sigma$. For each $\mu \in \cP_{n-1}$, we let $\widetilde{\cP}_{m}:= \{ \lambda \in \cP_{m} : \lambda \not\subset \bigcup_{\mu' \prec \mu} \mu' \}$.
		Then the complex
		$$ A_{\mu}^{\bullet} : \bigoplus_{\lambda\lind{0} \in \widetilde{\cP}_0} V_{\lambda\lind{0}}^{n} \to \bigoplus_{\lambda\lind{1} \in \widetilde{\cP}_1} V_{\lambda\lind{1}}^{n} \to \ldots \to \bigoplus_{\lambda\lind{n} \in \widetilde{\cP}_n} V_{\lambda\lind{n}}^{n} $$
		is exact. In particular, $\Ish_{\sigma}^{n}$ is exact by taking $\mu$ to be the minimal element of the shelling $\prec$.
	\end{lemm}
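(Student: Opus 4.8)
The plan is to induct on $n=\dim\sigma$, and for fixed $\sigma$ to exploit the shelling $\prec$ to build a finite decreasing filtration of $\Ish_\sigma^n$ whose successive quotients are Ishida-type complexes attached to the facets, which have dimension $n-1$. The point is that the complexes $A_\mu^\bullet$ in the statement are themselves the steps of this filtration. Concretely, write a shelling as $\mu_1\prec\cdots\prec\mu_s$ and set $\Sigma_k=\mu_1\cup\cdots\cup\mu_k$ (with $\Sigma_0=\emptyset$); then $A_{\mu_k}^\bullet=\bigoplus_{\lambda\not\subset\Sigma_{k-1}}V_\lambda^n$, and since $\{\lambda:\lambda\not\subset\Sigma_{k-1}\}$ is an up-set in the face poset while the differential of $\Ish_\sigma^n$ increases dimension, each $A_{\mu_k}^\bullet$ is a subcomplex of $\Ish_\sigma^n$, giving a chain $\Ish_\sigma^n=A_{\mu_1}^\bullet\supseteq A_{\mu_2}^\bullet\supseteq\cdots\supseteq A_{\mu_s}^\bullet$. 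So it suffices to prove each $A_{\mu_k}^\bullet$ is exact; this I will do by downward induction on $k$, with the dimension induction feeding in through the quotients.

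For the base of the dimension induction, $n=1$: here $\cP=\{0,\sigma\}$, $s=1$, and $A_{\mu_1}^\bullet=\Ish_\sigma^1$ is $V_0^1=\bigwedge^1 M_\RR\to V_\sigma^1=\RR$, i.e. $M_\RR\xrightarrow{\langle\cdot,\,n_{0,\sigma}\rangle}\RR$, an isomorphism, hence exact. For $n\ge 2$, I first treat $k=s$ directly: using that every codimension-$2$ face of $\sigma$ lies in exactly two facets (Lemma \ref{lemm:anti-commutativity-complex}), one checks that the only faces of $\sigma$ not contained in $\Sigma_{s-1}$ are $\mu_s$ and $\sigma$ itself, so $A_{\mu_s}^\bullet$ is the two-term complex $V_{\mu_s}^n\to V_\sigma^n$, that is $\mu_s^\perp\xrightarrow{\langle\cdot,\,n_{\mu_s,\sigma}\rangle}\RR$, again an isomorphism. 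For $k<s$ I use the short exact sequence of complexes
$$0\to A_{\mu_{k+1}}^\bullet\to A_{\mu_k}^\bullet\to Q_k^\bullet\to 0,$$
where $Q_k^\bullet=\bigoplus_{\lambda\subset\mu_k,\ \lambda\not\subset\mu_k\cap\Sigma_{k-1}}V_\lambda^n$; the identification of the quotient uses that a face $\lambda$ satisfies $\lambda\subset\Sigma_k$ and $\lambda\not\subset\Sigma_{k-1}$ exactly when $\lambda\subset\mu_k$ and $\lambda\not\subset\mu_k\cap\Sigma_{k-1}$, because a cone is not the union of its proper faces. By the downward induction hypothesis $A_{\mu_{k+1}}^\bullet$ is exact, so by the long exact cohomology sequence it is enough to prove $Q_k^\bullet$ is exact.

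This is where the dimension induction and the shelling axiom enter. For $\lambda\subset\mu_k$ one has $\mu_k^\perp\subset\lambda^\perp$ of codimension $n-1-d_\lambda$, and choosing a splitting $\lambda^\perp\cong\mu_k^\perp\oplus(\lambda^\perp/\mu_k^\perp)$ the top exterior power forces $V_\lambda^n=\bigwedge^{n-d_\lambda}\lambda^\perp\cong\mu_k^\perp\otimes\bigwedge^{(n-1)-d_\lambda}(\lambda^\perp/\mu_k^\perp)$, exactly as in the proof of Lemma \ref{lemm:grade-parts-of-Ishida-complex}; since $n_{\lambda,\lambda'}$ may be chosen inside $\langle\mu_k\rangle$ (as $\lambda'\subset\mu_k$) it pairs trivially with $\mu_k^\perp$, so the maps $\varphi_{\lambda,\lambda'}^n$ respect this splitting. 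Hence $Q_k^\bullet\cong\mu_k^\perp\otimes_\RR\widetilde Q_k^\bullet$, where $\widetilde Q_k^\bullet$ is the analogous complex formed from $\mu_k$ viewed as a full-dimensional cone in $\langle\mu_k\rangle\cong\RR^{n-1}$, restricted to the faces not contained in $\mu_k\cap\Sigma_{k-1}$. By the shelling axiom $\mu_k\cap\Sigma_{k-1}=\lambda_1\cup\cdots\cup\lambda_r$ is the union of the first $r$ facets of some shelling $\lambda_1\prec\cdots\prec\lambda_t$ of $\cP_{n-2}^{\subset\mu_k}$, so $\widetilde Q_k^\bullet$ is precisely $A_{\lambda_{r+1}}^\bullet$ for this shelling of the $(n-1)$-dimensional cone $\mu_k$ when $r\ge 1$, and it is $\Ish_{\mu_k}^{n-1}=A_{\lambda_1}^\bullet$ when $r=0$ (the case $k=1$). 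Since $k<s$, the facet $\mu_k$ is not the last one in the shelling, so $r<t$ and $\lambda_{r+1}$ exists — this is the standard fact that in a shelling only the final facet is glued along its entire boundary, cf. \cite{Bruggesser-Mani:Shellable}. Thus $\widetilde Q_k^\bullet$ is one of the complexes covered by the induction hypothesis in dimension $n-1$, hence exact, so $Q_k^\bullet$ is exact; this closes the downward induction, and taking $\mu=\mu_1$ gives that $\Ish_\sigma^n=A_{\mu_1}^\bullet$ is exact.

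I expect the main obstacle to be the structural identification $Q_k^\bullet\cong\mu_k^\perp\otimes\widetilde Q_k^\bullet$ with the differentials correctly matched, together with the point that a non-final facet of a shelling is attached along a proper (shellable) part of its boundary, which is exactly what makes $\widetilde Q_k^\bullet$ again an "$A$-complex" in one lower dimension so that the induction applies; once these are in place the rest is a routine long exact sequence chase and two one-dimensional base-case computations.
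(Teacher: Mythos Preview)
Your proposal is correct and follows essentially the same approach as the paper: both argue by induction on $n$, filter $\Ish_\sigma^n$ by the subcomplexes $A_{\mu_k}^\bullet$ coming from the shelling, handle $A_{\mu_s}^\bullet$ directly as a two-term isomorphism, and identify each successive quotient $A_{\mu_k}^\bullet/A_{\mu_{k+1}}^\bullet$ with $\mu_k^\perp$ tensored with an $A$-complex for the $(n-1)$-dimensional cone $\mu_k$ via the shelling axiom, so that the dimension induction applies. Your write-up is in fact a bit more careful than the paper's in checking that $r<t$ (so that $\lambda_{r+1}$ exists) for $k<s$.
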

	\begin{proof}
		We show this by induction on dimension. If $n = 1$, the the complex is $V_{0}^{1} \to \RR$ which is an isomorphism. Hence, exact. For the inductive step, we use the shelling order to filter this complex, where the successive quotient can be expressed as a complex coming from one dimension lower. We express the shelling order of $\sigma$ as $\mu_{1}\prec\ldots \prec \mu_{N}$. We first assert that $A_{\mu}^{\bullet}$ is indeed a complex. This is because if $\tau \in \widetilde{\cP}_{m}$, we can easily see that $\lambda \in \widetilde{\cP}_{d_{\lambda}}$ if $\tau \subset \lambda$. Hence, for $\lambda \in \widetilde{\cP}_{m}$ and $\nu \in \widetilde{\cP}_{m+2}$, the two faces $\tau_1, \tau_2$ satisfying $\lambda\subset \tau_1, \tau_2 \subset \nu$ is also in $\widetilde{\cP}_{m+1}$. We also point out that
		$$ A_{\mu_1}^{\bullet} \supset \ldots \supset A_{\mu_{N}}^{\bullet}$$
		as complexes. Therefore, it is enough to show that $A_{\mu_N}^{\bullet}$ is exact, and the quotient $A_{\mu_k}^{\bullet}/ A_{\mu_{k+1}}^{\bullet}$ is exact for $k = 1, \ldots, N-1$. Note that $A_{\mu_N}$ is the complex
		$$ V_{\mu_N}^{n} \xrightarrow{\varphi_{\mu_N, \sigma}} V_{\sigma}^{n} $$
		sitting in cohomological degree $n-1$ and $n$. Both are of dimension 1 and $\varphi_{\mu_N, \sigma}$ is nonzero. Hence, the complex is exact. If $1\leq k \leq N-1$, we let
		$$ \mu_{k} \cap \left( \mu_{1}\cup\ldots \cup \mu_{k-1}\right) = \lambda_{1} \cup \ldots \cup \lambda_{r},$$
		where $\lambda_{1},\ldots, \lambda_{r}$ is the first $r$ elements of the shelling of $\mu_k$. If $k = 1$, we let $\lambda_{1},\ldots, \lambda_{r}$ to be a shelling of $\mu_1$. We denote $\widetilde{\cQ}_{m} := \{ \nu \in \cP_{m}^{\subset \mu_k} : \nu \not\subset \bigcup_{j=1}^{r} \lambda_{j} \}$ for $m = 1, \ldots, n-1$. Then we can describe the complex $A_{\mu_k}^{\bullet} / A_{\mu_{k+1}}$ as follows:
		$$ A_{\mu_k}^{\bullet} / A_{\mu_{k+1}}^{\bullet} : \bigoplus_{\nu\lind{0} \in \widetilde{\cQ}_0} V_{\nu\lind{0}}^{n} \to \bigoplus_{\nu\lind{1} \in \widetilde{\cQ}_1} V_{\nu\lind{1}}^{n} \to \ldots \to \bigoplus_{\nu\lind{n-1} \in \widetilde{\cQ}_{n-1}} V_{\nu\lind{n-1}}^{n}.$$
		Note that for each $\nu \in \widetilde{Q}_{m}$, we have
		$$ V_{\nu}^{n} = \bigwedge^{n-m} \nu^{\perp} \simeq \bigwedge^{n-m-1} \frac{\nu^{\perp}}{\mu_{k}^{\perp}} \otimes \mu_{k}^{\perp}.$$
		Notice that $\nu^{\perp} / \mu_{k}^{\perp}$ can be canonically identified as the orthogonal to $\nu$ as a cone sitting inside $\langle \mu_{k} \rangle$, via the canonical identification $\langle\mu_{k} \rangle\dual \simeq M
        _{\RR}/\mu_{k}^{\perp}$. Therefore, we can consider $A_{\mu_k}^{\bullet} / A_{\mu_{k+1}}^{\bullet}$ as the complex associated to the cone $\mu_k$ inside an $(n-1)$-dimensional vector space $\langle \mu_k \rangle$ and first several facets $\lambda_1,\ldots, \lambda_r$ of the shelling order of $\mu_k$. Hence, this is exact by the induction hypothesis.
	\end{proof}
	
	\begin{prop} \label{prop:top-Ishida-is-dualizing}
		The natural inclusion $\omega_{X} \to \Ish_{X}^{n}$ is a quasi-isomorphism of complexes.
	\end{prop}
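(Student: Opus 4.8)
The plan is to reduce the statement to the combinatorial exactness of Lemma~\ref{lemm:exactness-for-Ish-n} by passing to the $M$-grading. First I would observe that, since $X$ is a (normal) toric variety, $\omega_{X} \simeq \Omega_{X}\ubind{n}$, and that under this identification the natural map of the statement is exactly the composite $\omega_{X} \simeq \Omega_{X}\ubind{n} \simeq \cH^{0}(\Ish_{X}^{n}) \hookrightarrow \Ish_{X}^{n}$, where the middle isomorphism is Proposition~\ref{prop:zeroth-coho-is-reflexivediff} (this is how the inclusion arises there, via logarithmic $n$-forms). A chain map out of a sheaf placed in cohomological degree $0$ is a quasi-isomorphism exactly when it induces an isomorphism on $\cH^{0}$ --- which holds by construction --- and the target has vanishing cohomology in all positive degrees. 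So it remains to prove $\cH^{i}(\Ish_{X}^{n}) = 0$ for $i > 0$.

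Since $\Ish_{X}^{n}$ and its cohomology sheaves carry a natural $M$-grading, it suffices to check this in each degree $u \in \sigma\dual \cap M$. By Remark~\ref{rema:grading-and-face-relations} there is a unique face $\tau \in \cP$ with $u \in \tau\sta_{\circ}$, and by Lemma~\ref{lemm:grade-parts-of-Ishida-complex} the degree-$u$ part of $\Ish_{X}^{n}$ is $\bigoplus_{j=0}^{n} \bigwedge^{j}\tau^{\perp} \otimes_{\RR} \Ish_{\tau}^{n-j}$, with $\tau$ regarded as a full-dimensional cone in $\langle\tau\rangle$. The key point is that only one summand is nonzero: $\bigwedge^{j}\tau^{\perp} = 0$ for $j > \dim_{\RR}\tau^{\perp} = n - d_{\tau}$, while $\Ish_{\tau}^{n-j} = 0$ for $n - j > d_{\tau}$; hence the degree-$u$ part is isomorphic to $\bigwedge^{n-d_{\tau}}\tau^{\perp} \otimes_{\RR} \Ish_{\tau}^{d_{\tau}}$, i.e.\ the top Ishida complex of the full-dimensional cone $\tau$, tensored with the one-dimensional space $\bigwedge^{n-d_{\tau}}\tau^{\perp}$.

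I would then finish with Lemma~\ref{lemm:exactness-for-Ish-n}. If $\tau \neq 0$, so $d_{\tau} \geq 1$, that lemma --- applied to $\tau$ as a full-dimensional cone in $\langle\tau\rangle \cong \RR^{d_{\tau}}$ --- gives that $\Ish_{\tau}^{d_{\tau}}$ is exact; thus the degree-$u$ part of $\Ish_{X}^{n}$ is acyclic and contributes nothing to any cohomology sheaf. If $\tau = 0$, equivalently $u$ lies in the interior of $\sigma\dual$, then $\Ish_{\tau}^{d_{\tau}} = \Ish_{0}^{0}$ is just $\RR$ in cohomological degree $0$, so the degree-$u$ part is $\bigwedge^{n}M_{\RR}$ in degree $0$ and again has no cohomology in positive degrees. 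In every case $\cH^{i}(\Ish_{X}^{n})_{u} = 0$ for $i > 0$, as needed (the case $n = 0$ being trivial). As a consistency check, the surviving degree-$0$ contributions reproduce precisely the dimensions of $(\Omega_{X}\ubind{n})_{u}$ recorded in Lemma~\ref{lemm:dimension-graded-of-reflex-diff}.

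Essentially all the work has been front-loaded into Lemma~\ref{lemm:exactness-for-Ish-n}, whose proof is the delicate shelling induction and is what lets us bypass injective resolutions of the dualizing sheaf; granting it, the present proposition is a short bookkeeping argument. The only point worth stating carefully is the first one --- that the ``natural inclusion'' in the statement coincides with the canonical map $\cH^{0}(\Ish_{X}^{n}) \hookrightarrow \Ish_{X}^{n}$ provided by Proposition~\ref{prop:zeroth-coho-is-reflexivediff}, so that the $\cH^{0}$-condition is automatic --- and this is immediate from the explicit description of that isomorphism in terms of logarithmic differential forms.
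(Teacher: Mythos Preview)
Your proof is correct and follows essentially the same route as the paper: both identify $\cH^{0}(\Ish_{X}^{n})$ with $\omega_{X}$ via Proposition~\ref{prop:zeroth-coho-is-reflexivediff}, then pass to the $M$-grading and reduce the vanishing of higher cohomology to Lemma~\ref{lemm:exactness-for-Ish-n} applied to the face $\tau$ with $u \in \tau_{\circ}\sta$. The only cosmetic difference is that you invoke Lemma~\ref{lemm:grade-parts-of-Ishida-complex} and then note that all but the $j = n - d_{\tau}$ summand vanish, whereas the paper writes out the degree-$u$ complex directly and identifies it with $V_{\tau}^{n} \otimes \Ish_{\tau}^{d_{\tau}}$ by hand.
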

	\begin{proof}
		Since $X$ is normal, $\omega_{X}$ is the sheaf of reflexive top differentials. Therefore $\cH^{0}(\Ish_{X}^{n}) \simeq \omega_{X}$. It remains to show that the higher cohomologies of $\Ish_{X}^{n}$ vanish. We exploit the grading by $M$. Fix a non-zero face $\tau \in \cP$ and $u \in \tau_{\circ}\sta$. Observing Remark \ref{rema:grading-and-face-relations}, we see that the degree $u$-part of the Ishida complex is exactly
		$$ V_{0}^{n} \to \bigoplus_{\lambda\lind{1}\in \cP_{1}^{\subset \tau}} V_{\lambda\lind{1}}^{n} \to \ldots \to \bigoplus_{\lambda\lind{d_{\tau}-1} \in \cP_{d_{\tau} -1}^{\subset \tau}} V_{\lambda\lind{d_{\tau}-1}}^{n} \to V_{\tau}^{n}. $$
		We see that $V_{\lambda\lind{i}}^{n} \simeq V_{\tau}^{n} \otimes \bigwedge^{n-d_{\tau} - i} \lambda_{(i)}^{\perp}/ \tau^{\perp}$.
		By Lemma \ref{lemm:exactness-for-Ish-n} applied to $\tau$, we see that this is an exact sequence. For $u \in 0_{\circ}\sta$, the degree $u$-part of the Ishida complex is $V_{0}^{n}$ concentrated at cohomological degree 0.
	\end{proof}
	
	We give with the main result of this section. Namely, the Ishida complex agrees with the Grothendieck dual of the sheaf of reflexive differentials.
	
	\begin{theo} \label{theo:Gro-dual-of-Ishida-is-duBois}
		$\bfR \SheafHom_{\cO_{X}} (\duBois_{X}^l , \omega_{X}) \simeq \Ish_{X}^{n-l}$.
	\end{theo}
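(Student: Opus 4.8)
The plan is to pass to Grothendieck duals and then exploit the torus action. Since $X$ is Cohen--Macaulay of dimension $n$, the complex $\omega_{X}[n]$ is a dualizing complex, so biduality $\bfR\SheafHom_{\cO_{X}}(\bfR\SheafHom_{\cO_{X}}(-,\omega_{X}),\omega_{X})\simeq\id$ holds on $D^{b}_{\coh}(X)$. Applying $\bfR\SheafHom_{\cO_{X}}(-,\omega_{X})$ to the assertion and invoking biduality for $\Ish_{X}^{n-l}$, the theorem becomes equivalent to $\bfR\SheafHom_{\cO_{X}}(\Ish_{X}^{n-l},\omega_{X})\simeq\Omega_{X}\ubind{l}$, and I would prove it in this form. (One can instead build the comparison directly from a pairing $\Ish_{X}^{n-l}\otimes_{\cO_{X}}\Ish_{X}^{l}\to\Ish_{X}^{n}\simeq\omega_{X}$, assembled from exterior products on the $V_{\mu}^{\bullet}$ and the restriction maps $\cO_{S_{\mu}}\otimes\cO_{S_{\nu}}\to\cO_{S_{\mu\vee\nu}}$; checking it is a chain map is the two-dimensional computation behind Lemma \ref{lemm:anti-commutativity-complex}, while its perfectness reduces to the analysis below.) We may assume $\sigma$ is full-dimensional, the general case following by a torus factor.

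First I would show the left-hand side is a single sheaf. Each stratum $S_{\mu}$ with $\mu\in\cP_{m}$ is an $(n-m)$-dimensional affine toric variety, hence Cohen--Macaulay, so $\SheafExt^{q}_{\cO_{X}}(\cO_{S_{\mu}},\omega_{X})=0$ for $q\ne m=\codim_{X}S_{\mu}$ and $\SheafExt^{m}_{\cO_{X}}(\cO_{S_{\mu}},\omega_{X})\simeq\omega_{S_{\mu}}$. Running the hypercohomology spectral sequence for $\bfR\SheafHom(-,\omega_{X})$ attached to the naive filtration of $\Ish_{X}^{n-l}$ by cohomological degree, the $E_{1}$-page is supported on one antidiagonal; the spectral sequence degenerates and $\cF:=\bfR\SheafHom_{\cO_{X}}(\Ish_{X}^{n-l},\omega_{X})$ is a coherent sheaf, carrying a torus-equivariant filtration with graded pieces $\bigoplus_{\mu\in\cP_{m}}(V_{\mu}^{n-l})^{\vee}\otimes\omega_{S_{\mu}}$ for $0\le m\le n-l$. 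Now $\omega_{S_{\mu}}$ is nonzero precisely in the $M$-degrees lying in $\mu_{\circ}\sta$ (where it is one-dimensional), and by Remark \ref{rema:grading-and-face-relations} the sets $\mu_{\circ}\sta$ partition $\sigma\dual\cap M$; so for $u\in\tau_{\circ}\sta$ only the piece with $\mu=\tau$ contributes, giving $\dim_{\CC}\cF_{u}=\dim_{\CC}V_{\tau}^{n-l}=\binom{n-d_{\tau}}{l}$, which by Lemma \ref{lemm:dimension-graded-of-reflex-diff} equals $\dim_{\CC}(\Omega_{X}\ubind{l})_{u}$. Thus $\cF$ and $\Omega_{X}\ubind{l}$ have the same $M$-graded Hilbert function.

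It remains to upgrade this numerical match to an isomorphism. Unconditionally, $\Omega_{X}\ubind{l}\simeq\SheafHom_{\cO_{X}}(\Omega_{X}\ubind{n-l},\omega_{X})$: on $X_{\reg}$ this is the wedge pairing $\Omega^{l}\wedge\Omega^{n-l}\to\Omega^{n}$, both sides are reflexive, and an isomorphism on the complement of a codimension $\ge 2$ closed subset of a normal variety extends. The inclusion $\cH^{0}(\Ish_{X}^{n-l})=\Omega_{X}\ubind{n-l}\hookrightarrow\Ish_{X}^{n-l}$ of Proposition \ref{prop:zeroth-coho-is-reflexivediff} then induces a natural map $\cF\to\SheafHom_{\cO_{X}}(\Omega_{X}\ubind{n-l},\omega_{X})\simeq\Omega_{X}\ubind{l}$; on $X_{\reg}$ the Ishida complex is a resolution of $\Omega^{n-l}$ (it is the logarithmic residue complex of a smooth toric variety with simple normal crossings boundary), so this map is an isomorphism over $X_{\reg}$, and its kernel and cokernel are supported on $X_{\sing}$, of codimension $\ge 2$.

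The main obstacle is to show this comparison map is injective; together with the Hilbert-function equality above, this forces the cokernel to vanish as well, proving the theorem. The kernel is the $\cH^{0}$ of $\bfR\SheafHom_{\cO_{X}}(-,\omega_{X})$ applied to the truncation of $\Ish_{X}^{n-l}$ in cohomological degrees $\ge 1$, so the crux is to control $\SheafExt^{q}_{\cO_{X}}(\cH^{q}(\Ish_{X}^{n-l}),\omega_{X})$ for $q\ge 2$. Since each $\cH^{q}(\Ish_{X}^{n-l})$ with $q\ge 1$ is supported on $X_{\sing}$, I would compute these by induction on $n$: on the chart $U_{\tau}\simeq V_{\tau}\times O_{\tau}$, the K\"unneth formula together with Lemma \ref{lemm:grade-parts-of-Ishida-complex} reduces the $M$-graded pieces indexed by $\tau\ne 0$ to the toric variety $V_{\tau}$, which has dimension $<n$, so the inductive hypothesis applies; the remaining degree-$0$ piece is the genuinely combinatorial core $\Ish_{\sigma}^{n-l}$, and is handled by combining the exactness results of \S\ref{section:Ishida-complex} (in particular Lemma \ref{lemm:exactness-for-Ish-n}) with graded local duality at the torus fixed point.
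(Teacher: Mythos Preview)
Your reduction to showing $\bfR\SheafHom_{\cO_{X}}(\Ish_{X}^{n-l},\omega_{X})\simeq\Omega_{X}\ubind{l}$, the spectral sequence argument collapsing to a single sheaf $\cF$, and the graded dimension count all match the paper exactly. The divergence is in how you produce and verify the comparison map.

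The paper takes the route you mention parenthetically and then set aside: it builds the map in the \emph{opposite} direction, $\Omega_{X}\ubind{l}\to\cF$, by sending $a\cdot\chi^{u}\in\Omega_{X}\ubind{l}$ to the explicit chain map $\Ish_{X}^{n-l}\to\Ish_{X}^{n}$ given termwise by $(\bullet\wedge a)\otimes(\cdot\,\chi^{u})$ on each $V_{\mu}^{n-l}\otimes\cO_{S_{\mu}}$. Injectivity of this map is then immediate from the description of the graded pieces, and the Hilbert-function equality finishes. No induction, no control of higher cohomologies of $\Ish_{X}^{n-l}$ is needed.

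Your chosen route, by contrast, has a genuine gap at the injectivity step. You reduce to showing $\cH^{0}\bfR\SheafHom_{\cO_{X}}(\tau^{\ge 1}\Ish_{X}^{n-l},\omega_{X})=0$, then claim the degree-$0$ piece is ``handled by combining the exactness results of \S\ref{section:Ishida-complex} (in particular Lemma \ref{lemm:exactness-for-Ish-n}) with graded local duality.'' But Lemma \ref{lemm:exactness-for-Ish-n} only concerns $\Ish_{\sigma}^{n}$; for $l>0$ the complex $\Ish_{\sigma}^{n-l}$ genuinely has higher cohomology (this is the whole content of later results like Theorem \ref{theo:depth-of-Du-Bois-general} and Proposition \ref{prop:Gro-dual-isolated-non-simplicial}), so there is no exactness to invoke, and you have not said what graded local duality is supposed to do with those nonzero cohomologies. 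As written, this step is unsubstantiated.

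Your approach is salvageable without appealing to the paper's explicit map, but by a different mechanism than the one you indicate. By induction the map $\cF\to\Omega_{X}\ubind{l}$ is an isomorphism on every chart $U_{\tau}$ with $\tau\subsetneq\sigma$, so its kernel is supported at $x_{\sigma}$. Now observe that the filtration on $\cF$ coming from the degenerate spectral sequence has successive quotients $(V_{\mu}^{n-l})^{\vee}\otimes\omega_{S_{\mu}}$ with $d_{\mu}\le n-l$; each $\omega_{S_{\mu}}$ is maximal Cohen--Macaulay on $S_{\mu}$ of dimension $n-d_{\mu}\ge l$, hence has depth $\ge l$ at $x_{\sigma}$ as an $\cO_{X}$-module. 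For $l\ge 1$ this forces $\depth_{x_{\sigma}}\cF\ge 1$, i.e.\ $H^{0}_{x_{\sigma}}(\cF)=0$, so $\cF$ has no nonzero submodule supported at $x_{\sigma}$ and the kernel vanishes; the Hilbert-function equality then kills the cokernel. The case $l=0$ is $\bfR\SheafHom(\omega_{X},\omega_{X})\simeq\cO_{X}$. This depth argument is the clean replacement for your vague final paragraph.
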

	
	\begin{proof}
		It is enough to show that $\bfR \SheafHom (\Ish_{X}^{n-l}, \omega_{X}) \simeq \duBois_{X}^l$ since $X$ is Cohen--Macaulay. First, we consider the hypercohomology spectral sequence. We get
		$$ E_{1}^{i,j} = \bigoplus_{\tau \in \cP_{-i}} \SheafExt_{\cO_{X}}^{j} (\cO_{S_{\tau}}, \omega_{X}) \otimes (V_{\tau}^{n-l})\dual \implies \SheafExt_{\cO_{X}}^{i+j} (\Ish_{X}^{n-l}, \omega_{X}). $$
		Note that $S_{\tau}$ are Cohen--Macaulay, and therefore
		$$ \SheafExt^{j}_{\cO_{X}}(\cO_{S_{\tau}}, \omega_{X}) = \SheafExt^{j - d_{\tau}}_{\cO_{S_{\tau}}} (\cO_{S_{\tau}}, \omega_{S_{\tau}}) = \begin{cases} \omega_{S_{\tau}} & j = d_{\tau} \\ 0 & j \neq d_{\tau}. \end{cases}$$
		Therefore, $E_{1}^{i,j} = 0$ unless $i + j  =0$. Hence, the spectral sequence degenerates at $E_{1}$ and $\SheafExt_{\cO_{X}}^{i}(\Ish_{X}^{p}, \omega_{X}) = 0$ for $i \neq 0$. Moreover, we count the dimension of the graded pieces in each degree. First, note that $(\omega_{S_{\tau}})_{u}$ has rank 1 for $u \in \tau_{\circ}\sta$ and zero otherwise. Also, the spectral sequence induces a filtration on $\SheafExt_{\cO_{X}}^{0}(\Ish_{X}^{n-l}, \omega_{X})$ whose successive quotients are isomorphic to
		$$ \bigoplus_{\tau \in \cP_{m}} \omega_{S_{\tau}} \otimes (V_{\tau}^{n-l})\dual.$$
		Therefore,
		$$ \dim_{\CC} \SheafExt_{\cO_{X}}^{0}(\Ish_{X}^{n-l}, \omega_{X})_{u} = \dim_{\RR} (V_{\tau}^{n-l})\dual = {n-d_{\tau} \choose l}, \qquad \text{for } u \in \tau_{\circ}\sta.$$
		
		Finally, for $a \cdot \chi^{u} \in \duBois_{X}^l$, we construct a morphism $\varphi : \Ish_{X}^{n-l} \to \Ish_{X}^{n}$. The $i$-th term is defined for $0\leq i \leq n-l$ as follows:
		$$\varphi^{i} : \bigoplus_{\mu \in\cP_{i}} V_{\mu}^{n-l} \otimes \cO_{S_{\mu}} \to \bigoplus_{\mu \in\cP_{i}} V_{\mu}^{n} \otimes \cO_{S_{\mu}}$$
		by $\bullet \wedge a : V_{\mu}^{i} \to V_{\mu}^{n}$ and the multiplication by $\chi^{u}$ on $\cO_{S_{\mu}}$. We point out that the multiplication $\bullet \wedge a$ does not make sense if $a$ is not in $\bigwedge^{l} \mu^{\perp}$, but in this case, we have $u \notin \tau^{\perp} \cap \sigma\dual \cap M$ (by Proposition \ref{prop:zeroth-coho-is-reflexivediff}) which makes the morphism zero anyways. It is clear that $\varphi$ is actually a morphism of complexes, and induces an injective homomorphism
		$$ \duBois_{X}^l \to \bfR^{0}\SheafHom (\Ish_{X}^{n-l},\Ish_{X}^{n}). $$
		Again, using the grading by $M$ and Proposition \ref{prop:top-Ishida-is-dualizing}, we see that this homomorphism is bijective since the dimensions of the graded pieces match up by Lemma \ref{lemm:dimension-graded-of-reflex-diff}.
	\end{proof}

    \begin{rema}
        We can also see that the isomorphism 
        $$\bfR\SheafHom_{\cO_{X}}(\Ish_{X}^{n-l}, \omega_{X}) \simeq \bfR^{0} \SheafHom_{\cO_{X}}(\Ish_{X}^{n-l}, \omega_{X}) \simeq \duBois_{X}^{l}$$ holds for arbitrary toric varieties $X$. This is because the restrictions of the homomorphisms $a \cdot \chi^{u} : \Ish_{X}^{n-l} \to \Ish_{X}^{n}$ on affine charts are compatible with the restriction of differential forms. Hence, we get a global morphism $\duBois_{X}^{l} \to \bfR^{0}\SheafHom_{\cO_{X}}(\Ish_{X}^{n-l}, \Ish_{X}^{n})$ and verifying the isomorphism can be done affine locally.
    \end{rema}
	
	We now simply rephrase Theorem \ref{theo:MP-lcd-intermsof-DuBois} in terms of the Ishida complex.
	\begin{prop} \label{prop:lcd-intermsof-Ishida}
		Let $X$ be a toric variety of dimension $n$. Then the following are equivalent:
		\begin{enumerate}
			\item $\lcdef(X) \leq c$,
			\item $\cH^{j+l+1}(\Ish_{X}^{n-l}) = 0$ for all $j \geq c$ and $l \geq 0$.
		\end{enumerate}
	\end{prop}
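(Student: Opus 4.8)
The plan is to deduce this directly from Theorem~\ref{theo:MP-lcd-intermsof-DuBois} together with the identification of the Grothendieck dual of the reflexive differentials carried out in Theorem~\ref{theo:Gro-dual-of-Ishida-is-duBois}; no new geometric input is needed, only a translation of indices.

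First I would record the two simplifications available for a toric variety $X$ of dimension $n$. Since $X$ is toric, $\duBois_{X}^{l}\simeq\Omega_{X}\ubind{l}$ (see \S\ref{section:DB-complex}), and since $X$ is normal it has rational, hence Cohen--Macaulay, singularities, so $\omega_{X}^{\bullet}\simeq\omega_{X}[n]$ and therefore $\omega_{X}^{\bullet}[-\dim X]\simeq\omega_{X}$. With these substitutions, the second form of Theorem~\ref{theo:MP-lcd-intermsof-DuBois} says that $\lcdef(X)\leq c$ if and only if $\SheafExt_{\cO_{X}}^{j+l+1}(\Omega_{X}\ubind{l},\omega_{X})=0$ for all $j\geq c$ and $l\geq 0$. (For the hypotheses of Theorem~\ref{theo:MP-lcd-intermsof-DuBois} one also notes that every toric variety is quasi-projective, hence embeddable in a smooth variety, so that version applies.)

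Next I would feed in Theorem~\ref{theo:Gro-dual-of-Ishida-is-duBois}, which gives $\bfR\SheafHom_{\cO_{X}}(\Omega_{X}\ubind{l},\omega_{X})\simeq\Ish_{X}^{n-l}$. Passing to the cohomology sheaf in degree $j+l+1$ yields $\SheafExt_{\cO_{X}}^{j+l+1}(\Omega_{X}\ubind{l},\omega_{X})\simeq\cH^{j+l+1}(\Ish_{X}^{n-l})$, so the $\SheafExt$-vanishing above is exactly condition~(2) of the proposition. Combining the two equivalences finishes the argument. I expect no real obstacle here: the mathematical content is entirely contained in the two cited theorems, and the only thing to be careful about is the bookkeeping of the shift $\omega_{X}^{\bullet}[-\dim X]=\omega_{X}$ and of the index $n-l$ coming from Theorem~\ref{theo:Gro-dual-of-Ishida-is-duBois}.
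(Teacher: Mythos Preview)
Your proposal is correct and follows exactly the paper's approach: the proposition is presented there merely as a rephrasing of Theorem~\ref{theo:MP-lcd-intermsof-DuBois} via Theorem~\ref{theo:Gro-dual-of-Ishida-is-duBois} together with the Cohen--Macaulay identification $\omega_{X}^{\bullet}[-n]\simeq\omega_{X}$. One small caveat: not every toric variety is quasi-projective, but this is harmless since both conditions in the proposition are local and every affine toric variety is embeddable in a smooth variety.
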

	
	Here, we point out that $\omega_{X}^{\bullet}[-\dim X] = \omega_{X}$ since $X$ is Cohen--Macaulay. We end this section with a linear algebra lemma which will be used later.
	\begin{lemm} \label{lemm:exact-sequence-from-simplicial}
		Let $\mu \in \cP_{m}$ such that the corresponding torus-invariant subvariety $S_{\mu}$ is simplicial. Also assume that $m > 0$. Then the sequence
		$$ V_{\mu}^{l} \to \bigoplus_{\lambda\lind{m+1} \in \cP_{m+1}^{\supset \mu}} V_{\lambda\lind{m+1}}^{l} \to \ldots \to \bigoplus_{\lambda\lind{l} \in \cP_{l}^{\supset \mu}} V_{\lambda\lind{l}}^{l} $$
		is exact for $l \geq m$.
	\end{lemm}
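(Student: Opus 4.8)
\emph{Proof plan.} The plan is to convert the statement into a linear-algebra assertion by exploiting the simpliciality of $S_\mu$, then to peel off a tensor factor and finish by an induction on the number of rays. Recall that $S_\mu$ is the toric variety of the quotient cone $\overline{\sigma}_\mu$ (the image of $\sigma$ under $N_\RR \to N_\RR/\langle\mu\rangle$), so the hypothesis says $\overline{\sigma}_\mu$ is simplicial. Let $k := |\cP_{m+1}^{\supset\mu}|$ be its number of rays, with primitive generators $v_1,\dots,v_k$; by simpliciality the faces of $\overline{\sigma}_\mu$ — equivalently the elements of $\cP^{\supset\mu}$ — are indexed by subsets $I \subseteq [k]:=\{1,\dots,k\}$, with $d_{\lambda_I} = m+|I|$. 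First I would extend $v_1,\dots,v_k$ to a basis of $N_\RR/\langle\mu\rangle$ and let $e_1,\dots,e_{n-m}$ be the dual basis of $\mu^\perp$; setting $W' := \langle e_1,\dots,e_k\rangle$ and $W'' := \langle e_{k+1},\dots,e_{n-m}\rangle$ we get $\mu^\perp = W' \oplus W''$. A direct check from Definition \ref{defi:V-mu-l} then gives $\lambda_I^\perp = W'_{\widehat I} \oplus W''$ with $W'_{\widehat I} := \langle e_l : l \in [k]\setminus I\rangle$, and shows that for $J = I\cup\{i\}$ one may take $n_{\lambda_I,\lambda_J} = v_i$, so that $\varphi^l_{\lambda_I,\lambda_J}$ equals, up to the sign prescribed by the complex, the contraction $\iota_{v_i}\colon \bigwedge^{\bullet}\lambda_I^\perp \to \bigwedge^{\bullet-1}\lambda_J^\perp$. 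In particular every $\iota_{v_i}$ annihilates $W''$.

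Since the differentials act only on the $W'$-factor, the complex $C^\bullet$ of the lemma (placed in degrees $0,\dots,p$ with $p := l-m$, degree $j$ being $\bigoplus_{\lambda\in\cP^{\supset\mu}_{m+j}} V^l_\lambda$) splits, compatibly with the differential, as
$$ C^\bullet \;\simeq\; \bigoplus_{b=0}^{p} E^\bullet(k,\, p-b) \otimes_\RR \bigwedge^{b} W'', $$
where, for integers $k\ge 0$ and $q$, the complex $E^\bullet(k,q)$ has degree-$j$ term $\bigoplus_{I\subseteq[k],\,|I|=j}\bigwedge^{q-j}\langle e_l : l\in[k]\setminus I\rangle$ and contraction differentials (so the summand for $I=\emptyset$ at $b=0$ recovers $V^l_\mu=\bigwedge^p\mu^\perp$). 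Thus it suffices to show that $E^\bullet(k,q)$ is exact for $q\ge 1$, while $E^\bullet(k,0)$ is $\RR$ concentrated in degree $0$: granting this, the only surviving cohomology of $C^\bullet$ sits in degree $0$ (equal to $\bigwedge^p W''$, from the $b=p$ summand), which is precisely the asserted exactness at every term other than $V^l_\mu$. (If $\sigma$ is full-dimensional then $W''=0$ and the sequence is exact at $V^l_\mu$ as well.)

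For the exactness of $E^\bullet(k,q)$, $q\ge 1$, I would induct on $k$, the base case $k=0$ being the zero complex $\bigwedge^q(0)$. For the inductive step, split off $e_k$: the span of the summands with $k\notin I$ consisting of wedge products not involving $e_k$ is a subcomplex — it is one because $\iota_{v_k}$ kills forms without $e_k$ — and is isomorphic to $E^\bullet(k-1,q)$; stripping $e_k$ off the remaining summands identifies the quotient with the mapping cone of the identity endomorphism of $E^\bullet(k-1,q-1)$, the connecting differential being $\iota_{v_k}(e_k\wedge{-}) = \mathrm{id}$, hence acyclic. The long exact sequence reduces the claim to the case $k-1$, closing the induction (and for $q\ge 1$ one lands, after $k$ steps, on $E^\bullet(0,q)=0$). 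The one genuinely delicate point — and the only real obstacle — is keeping track of the signs inherited from the Ishida-type differential carefully enough that this quotient is literally the cone of an isomorphism; since all those signs descend from a single complex whose square vanishes by Lemma \ref{lemm:anti-commutativity-complex}, this is a matter of care rather than of substance.
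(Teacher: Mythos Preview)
Your proof is correct and genuinely different from the paper's. The paper argues by identifying the displayed complex with the degree-zero piece of an Ishida complex of the simplicial toric variety $S_\mu$, and then invokes Grothendieck duality together with \cite{Mustata-Popa22:Hodge-filtration-local-cohomology}*{Corollary 4.29} (varieties with quotient singularities have $\SheafExt^{>0}(\Omega^{[p]},\omega)=0$) to conclude acyclicity. So the paper's proof is short but imports the Hodge-module machinery sitting behind the Musta\c{t}\u{a}--Popa result.

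Your argument, by contrast, is a self-contained Koszul-type computation: you use simpliciality of $\overline{\sigma}_\mu$ to choose explicit dual coordinates, peel off the $W''$-tensor factor (which is inert under all contractions $\iota_{v_i}$), and then show the residual complexes $E^\bullet(k,q)$ are acyclic by a clean induction on the number of rays, via the short exact sequence $0\to E^\bullet(k-1,q)\to E^\bullet(k,q)\to\cone(\id_{E^\bullet(k-1,q-1)})\to 0$. The identification $\varphi^l_{\lambda_I,\lambda_J}=\iota_{v_i}$ is exactly right (this is immediate from Definition \ref{defi:V-mu-l} once one notes that $n_{\lambda_I,\lambda_J}$ pairs trivially with $\lambda_J^\perp$), and your remark that the sign bookkeeping is routine because all signs descend from a single complex with $d^2=0$ is well taken. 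Your approach buys an elementary, purely combinatorial proof of a purely combinatorial statement, which is in the spirit of the paper's own remarks about the desirability of such arguments; the paper's approach buys brevity and a conceptual link to the broader framework. Your parenthetical about $H^0(C^\bullet)=\bigwedge^p W''$ and its vanishing exactly when $\sigma$ is full-dimensional is also a nice clarification of what the lemma actually asserts.
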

	\begin{proof}
		Let $\overline{N} = N/ \langle \mu \rangle \cap N$ and we have a natural identification $\mu^{\perp} \simeq \Hom_{\RR}(\overline{N}_{\RR} , \RR)$. This gives an identification $\nu^{\perp} \simeq \overline{\nu}^{\perp} \subset \Hom_{\RR}(\overline{N}_{\RR}, \RR)$ where $\overline{\nu}$ is the image of $\nu$ in $\overline{N}_{\RR}$. Let $\overline{\cP}$ be the collection of faces of the cone $\overline{\sigma}$. Then we have a natural 1 to 1 correspondence between $\cP_{m+k}^{\supset \mu}$ and $\overline{\cP}_{k}$ by taking $\lambda\lind{m+k}$ to $\overline{\lambda\lind{m+k}}$. Under this identification, one can see that the complex above is isomorphic to $\Ish_{\overline{\sigma}}^{l}$. Note that the toric variety corresponding to $\overline{\sigma} \subset \overline{N}$ is $S_{\mu}$ and this is a simplicial toric variety, hence has quotient singularities. Following the proofs in \cite{Mustata-Popa22:Hodge-filtration-local-cohomology}*{Corollary 4.29}, we have $$\SheafExt_{\cO_{S_{\mu}}}^{i}(\duBois_{S_{\mu}}^{n-m-l}, \omega_{S_{\tau}}) \simeq \begin{cases} \duBois_{S_{\mu}}^l & i = 0 \\ 0 & i \neq 0. \end{cases}$$
		The cohomologies of the sequence above are exactly the degree zero part of the $\SheafExt$ sheaves above. Since we know that $\left(\duBois_{S_{\mu}}^{l}\right)_{0} = 0$ for $l > 0$, we are done.
	\end{proof}
	
	\section{The Hodge module structure of $\QQ_{X}^{H'}$}\label{section:Hodge-module-structure-Q}

    \subsection{Proof of Theorems \ref{theo:MHM-structure} and \ref{theo:depth-of-Du-Bois-general}}
	We describe the structure of $\QQ_{X}^{H'}$ as in Theorem \ref{theo:MHM-structure}. After that, we prove Theorem \ref{theo:depth-of-Du-Bois-general} on the depth of the reflexive differentials. All of the assertions are local, hence we assume that $X$ is an $n$-dimensional affine toric variety corresponding to a cone $\sigma$.
    
    First, we focus on the claim in Theorem \ref{theo:MHM-structure} that $\QQ_{X}^{H'}$ has no cohomologies outside degrees $[-(n-3), 0]$, which is the same as claiming that $\lcdef(X) \leq n-3$. To prove this, we can directly use the result of \cite{Dao-Takagi} since toric varieties are Cohen--Macaulay or we can also appeal to Theorem \ref{theo:simple-upperbound-depth} since $\sigma$ is simple in dimension $n-2$. However, we give an alternate self contained proof here using the criteria in Theorem \ref{theo:MP-lcd-intermsof-DuBois}.
    
    From the description of the Ishida complex, it is clear that $\SheafExt_{\cO_{X}}^{p}(\duBois_{X}^{q} , \omega_{X}) = 0$ if $p + q > n$, since $\Ish_{X}^{n-q}$ sits inside cohomological degrees $0$ to $n-q$. 
    Also, for $q = 0$, we have $\duBois_{X}^{0} \simeq \cO_{X}$ and so, all of the higher $\SheafExt$ sheaves vanish. It only remains to prove that $\SheafExt_{\cO_{X}}^{n-1}(\duBois_{X}^{1}, \omega_{X}) = 0$ which we show now. In fact, the proof of Theorem \ref{theo:simple-upperbound-depth} is a direct generalization of the following argument. We consider the Ishida complex:
	$$ \Ish_{X}^{n-1} : V_{0}^{n-1} \otimes \cO_{X} \to \ldots \to \ldots \bigoplus_{\lambda \in \cP_{n-2}} \lambda^{\perp} \otimes \cO_{S_{\lambda}} \to \bigoplus_{\tau \in \cP_{n-1}} \RR_{\tau} \otimes \cO_{S_{\tau}}$$
	We use the grading by $M$ and compute the cohomology at the last slot. The only relevant degrees are $u \in \tau_{\circ}\sta$ for $\tau \in \cP_{n-1}$ and $u \in \sigma_{\circ}\sta$, which is the case when $u = 0$. For the first case $u \in \tau_{\circ}\sta$, the last two entries of $(\Ish_{X}^{n-1})_{u}$ is
	$$ \bigoplus_{\lambda \in \cP_{n-2}^{\subset \tau}} \lambda^{\perp} \to \RR_{\tau}. $$
	The set $\cP_{n-2}^{\subset \tau}$ is non-empty and therefore the map is surjective. We consider the case $u = 0$. Note that for each $\lambda \in \cP_{n-2}$, there are exactly two faces $\tau , \tau'\in \cP_{n-1}$ such that $\tau \supset \lambda$. Using Lemma \ref{lemm:exact-sequence-from-simplicial} applied to $\lambda$, we see that we have an isomorphism
	$$ \lambda^{\perp} \xrightarrow{(\varphi_{\lambda, \tau},\varphi_{\lambda, \tau'}) } \RR_{\tau} \oplus \RR_{\tau'}.$$
	Therefore, we have a vector $v_{\tau} \in\lambda^{\perp}$ that maps zero to $\RR_{\tau'}$ and non-zero to $\RR_{\tau}$. This shows that the map $\bigoplus_{\lambda \in \cP_{n-2}} \lambda^{\perp} \to \bigoplus_{\tau \in \cP_{n-1}} \RR_{\tau}$ is surjective.

    Now, we give a proof of Theorem \ref{theo:MHM-structure}.
    
    \begin{proof}[Proof of Theorem \ref{theo:MHM-structure}]
        The structure of the proof goes as follows. First, we look at the behavior of $\QQ_{X}^{H'}$ near the positive dimensional torus orbits, and use induction on dimension to get a control on the structure of $\QQ_{X}^{H'}$ away from the torus fixed point. Then, we determine the behavior at the torus fixed point by calculating the contributions of various intersection cohomologies coming from the positive dimensional torus orbits.
        
        \textbf{Step 1.} For $\tau$ a proper face of $\sigma$, we analyze the structure of $\QQ_{X}^{H'}$ on the open chart $U_{\tau}$. From \S\ref{section:toric-var-basic}, we see that $U_{\tau}$ can be written as $V_{\tau} \times O_{\tau}$ where $V_{\tau}$ is the full-dimensional toric variety corresponding to $\tau$. Let $\pi : U_{\tau} \to V_{\tau}$ be the smooth projection of relative dimension $n-d_{\tau}$. Note that $\QQ_{U_{\tau}}^{H} = \pi\sta \QQ_{V_{\tau}}^{H}$ and therefore, $\pi\sta \QQ_{V_{\tau}}^{H'} \simeq \QQ_{U_{\tau}}^{H'}[-(n-d_{\tau})]$. Hence, we get
		$$ \pi\sta \gr_{d_{\tau}-k}^{W} \cH^{-l}\QQ_{V_{\tau}}^{H'} \simeq \left(\gr^{W}_{n-k} \cH^{-l+(n-d_{\tau})} \QQ_{U_{\tau}}^{H'}[-(n-d_{\tau})]\right) [n-d_{\tau}] \simeq\left(\gr_{n-k}^{W} \cH^{-l}\QQ_{U_{\tau}}^{H'} \right) [n-d_{\tau}].$$ 
		Consider the decomposition by strict support of $\gr_{d_{\tau}-k}^{W} \cH^{-l}\QQ_{V_{\tau}}^{H'}$. Denote by $\cM_{\tau}$ the summands with strict support $x_{\tau} \in V_{\tau}$, the torus fixed point of $V_\tau$. By induction, we see that 
		$$\cM_{\tau} \simeq \QQ_{x_{\tau}}^{H} (-j)^{a_{\tau}^{l, j}},$$
		where $k + 2j = d_{\tau}$. Implicitly, this means that $\cM_{\tau}$ is zero if $d_{\tau} - k$ is odd. From this, we are able to compute the decomposition by strict support of the module $\gr_{n-k}^{W} \cH^{-l} \QQ_{U_{\tau}}^{H'}$. 

        In particular, the term with strict support $O_{\tau}$ is $\QQ_{O_{\tau}}^{H'}(-j)^{a_{\tau}^{l, j}}$, where $k + 2j = d_{\tau}$. Also, we see that $a_{\tau}^{l, j}$ is zero unless $j \geq 1$. Since we know that components with strict support on each orbit $O_{\tau}$, we see that
		\begin{equation} \label{eqn:decomp-without-torusfixedpoint}
		    \gr_{n-k}^{W} \cH^{-l} \QQ_{X}^{H'} \simeq \cK_{k, l} \oplus\bigoplus_{\substack{j \geq 1 \\ k + 2j < n}} \bigoplus_{\lambda \in \cP_{k + 2j}}  \IC_{S_{\lambda}}^{H}(-j)^{a_{\lambda}^{l, j}}
		\end{equation}
		where $\cK_{k, l}$ is a pure Hodge module of weight $n-k$ supported on the torus fixed point $x_{\sigma}$, and we view it as a pure Hodge structure. This also shows that the numbers $a_{\lambda}^{l,j}$ only depend on $\lambda$.

        It remains to show the two following things:
        \begin{enumerate}
			\item $\cK_{k, l} = 0$ unless $l+1\leq k \leq n-2$, and
			\item $\cK_{k, l}$ is pure of Hodge--Tate type if it is non-zero.
		\end{enumerate}

        By \cite{Popa-Park:lefschetz}*{Proposition 6.4}, we see that $\cK_{k, l} = 0$ unless $k \geq l+1$ since $\gr_{n-k}^{W} \cH^{-l} \QQ_{X}^{H'} = 0$ for $l > 0$ and $n-k \geq n-l$. For $k=l=0$, we can directly see that $\gr_{n}^{W} \cH^{0} \QQ_{X}^{H'} \simeq \IC_{X}^{H}$. Hence, it remains to show that $\cK_{k, l}$ is of weight $\geq 2$ (i.e., $k \leq n-2$) and of Hodge--Tate type.

        For this, we will use two spectral sequences
        $$ \label{equation:1-spec-seq}
            E_{2}^{p,q} = \cH^{p} i_{x_{\sigma}}\sta \cH^{q}\QQ_{X}^{H'} \implies \cH^{p+q} i_{x_{\sigma}}\sta \QQ_{X}^{H'} = \begin{cases}
            \QQ_{x_{\sigma}}^{H} & p + q = -n \\
            0 & \text{otherwise},
        \end{cases}
        $$
        and
        $$
           \widetilde{E}_{1}^{r,s}(q) = \cH^{r+s} i_{x_{\sigma}}\sta \gr_{-r}^{W} \cH^{q} \QQ_{X}^{H'} \implies \cH^{r+s} i_{x_{\sigma}}\sta \cH^{q} \QQ_{X}^{H'}. $$

        The aim for the rest of the proof is to show that $\widetilde{E}_{1}^{r, -r}(q)$ is mixed of Hodge--Tate type of weight $\geq 2$ for all $r,q$. This would imply the same for $\cK_{k, l}$ since it is a direct summand of $\widetilde{E}_{1}^{k-n, -(k-n)}(-l)$, by (\ref{eqn:decomp-without-torusfixedpoint}). Observe that the property `mixed of Hodge--Tate type of weight $\geq 2$' is preserved under taking subquotients and extensions.
           
        \textbf{Step 2.} Here, we show that $\cH^{p} i_{x_{\sigma}}\sta \cH^{q}\QQ_{X}^{H'}$ is mixed of Hodge--Tate type of weight $\geq 2$ for $-n<p<0$. We use the decomposition in (\ref{eqn:decomp-without-torusfixedpoint}) to understand the term $\widetilde{E}_{1}^{r,s}(q)$ of the second spectral sequence. Observe that if $S_{\lambda}$ is of dimension $-r-2j$, then Lemma \ref{lemm:stalk-of-IC-pure-HT} implies that $\cH^p i_{x_\sigma}^* \IC^H_{S_\lambda}(-j)$ is pure of Hodge--Tate type of weight $p-r$ and Lemma \ref{lemm:stalk-of-IC-pure-HT} implies that $\cH^p i_{x_\sigma}^* \IC^H_{S_\lambda}(-j)$ is zero unless $0 \leq p-r-2j \leq \frac{-r-2j}{2}$.
        In particular, if $j \geq 1$, we have that the weight $p-r \geq 2$. This tells us that $\widetilde{E}_{1}^{r,p-r}(q)$ is pure of Hodge--Tate type of weight $\geq 2$ if $p < 0$ and $q<0$, since we do not have a contribution from $\cK_{k, l}$'s that are supported at $x_{\sigma}$, or from $\IC^H_X$. Therefore, the second spectral sequence tells us that $\cH^{p}i_{x_{\sigma}}\sta \cH^{q}\QQ_{X}^{H'}$ is mixed of Hodge--Tate type of weight $\geq 2$ since it admits a filtration whose graded pieces are subquotients of $\widetilde{E}_{1}^{r,p-r}(q)$. 
        
        When $p<0$ and $q=0$, the same argument shows that $\widetilde{E}_{1}^{r,p-r}(0)$ is pure of Hodge--Tate type of weight $\geq 2$ for $r \neq -n$. When $r=-n$, we have from Lemma \ref{lemm:stalk-of-IC-pure-HT} that $\widetilde{E}_{1}^{-n,p+n}(0)$ is pure of Hodge--Tate type of weight $\geq 2$ for $p \neq -n$. Therefore $\cH^{p}i_{x_{\sigma}}\sta \cH^{0}\QQ_{X}^{H'}$ is mixed of Hodge--Tate type of weight $\geq 2$ for $-n<p<0$.

        \textbf{Step 3.} From the first spectral sequence, we see that for $-(n-3) \leq q \leq 0$, we get $E_{\infty}^{0, q} = 0$. Additionally, observe that $E_2^{p,q} = \cH^{p} i_{x_{\sigma}}\sta \cH^{q} \QQ_{X}^{H'} = 0$ for $p>0$ since $\widetilde{E}_{1}^{r, p-r}(q) = 0$ for $p>0$ by Lemma \ref{lemm:stalk-of-IC-pure-HT}. Therefore, $E_2^{0,q} = \cH^{0} i_{x_{\sigma}}\sta \cH^{q} \QQ_{X}^{H'}$ admits a filtration whose successive quotients are subquotients of $E_2^{p',q'} = \cH^{p'} i_{x_{\sigma}}\sta \cH^{q'} \QQ_{X}^{H'}$ for $p' + q' = q-1$ and $p'<-1$. Since any such $\cH^{p'} i_{x_{\sigma}}\sta \cH^{q'} \QQ_{X}^{H'}$ is mixed of Hodge--Tate type of weight $\geq 2$ by Step 2, so is $\cH^{0} i_{x_{\sigma}}\sta \cH^{q} \QQ_{X}^{H'}$.

        \textbf{Step 4.} Finally, we go back to the second spectral sequence again. Since $\cH^{0}i_{x_{\sigma}}\sta \cH^{q} \QQ_{X}^{H'}$ is mixed of Hodge--Tate type with weights $\geq 2$, so are $\widetilde{E}_{\infty}^{r, -r}(q)$. First, note that $\widetilde{E}_{1}^{r, s}(q) = 0$ for $r + s > 0$ by Lemma \ref{lemm:stalk-of-IC-pure-HT}. This implies that $\widetilde{E}_{\infty}^{r, -r}(q)$ is a quotient of $\widetilde{E}_{1}^{r, -r}(q)$ by an object which admits a filtration whose successive quotients are subquotients of $\widetilde{E}_{1}^{r', s'}(q)$ for $r' + s' = -1$. However, by the same argument in Step 2, we see that $\widetilde{E}_{1}^{r', s'}(q)$ is mixed of Hodge--Tate type of weight $\geq 2$. This implies that $\widetilde{E}_{1}^{r, -r}(q)$ is as well. This concludes the proof since $\cK_{k, l}$'s are direct summands of these.
    \end{proof}
	
	Using the structure theorem for $\QQ_{X}^{H'}$, we prove Theorem \ref{theo:depth-of-Du-Bois-general}.
	
	\begin{proof}[Proof of Theorem \ref{theo:depth-of-Du-Bois-general}]
		From the realization of $\bfR\SheafHom (\duBois_{X}^{k} ,\omega_{X})$ in terms of the Ishida complex, we immediately see that $\SheafExt_{\cO_{X}}^{l}(\duBois_{X}^{k}, \omega_{X}) = 0$ if $l+k > n$. Therefore, we only show $\SheafExt_{\cO_{X}}^{n-k}(\duBois_{X}^{k}, \omega_{X}) = 0$ for $k \leq n/2$. By exploiting the grading by $M$, it is enough to show that $H^{n-k}(\Ish_{X}^{n-k})_{u} = 0$ for $u \in \tau_{\circ}\sta$, where $\tau$ is a face of $\sigma$. From Lemma \ref{lemm:grade-parts-of-Ishida-complex}, we see that
		$$ \left(\Ish_{X}^{n-k}\right)_{u} \simeq \bigoplus_{j=0}^{n-k} \bigwedge^{j} \tau^{\perp} \otimes \Ish_{\tau}^{n-k-j}. $$
		By taking the $(n-k)$-th cohomology, we see that $H^{n-k}(\Ish_{X}^{n-k})_{u} \simeq H^{n-k}\Ish_{\tau}^{n-k}$. Note that $n-k = d_{\tau} - (k-n+d_{\tau})$ and $k - n + d_{\tau} \leq -\frac{n}{2} + d_{\tau} \leq \frac{d_{\tau}}{2}$ since $d_{\tau} \leq n$. Using induction on dimension, we only have to show that $H^{n-k}(\Ish_{\sigma}^{n-k}) = 0$ for $k \leq n/2$, which is the graded piece at degree 0.
		We do this by considering the spectral sequence
		$$ E_{2}^{p,q} = \cH^{p} \left( \gr_{k} \DR \cH^{q} \DD \QQ_{X}^{H'} \right) \implies \SheafExt_{\cO_{X}}^{k+p+q}(\duBois_{X}^{k}, \omega_{X}).$$
		Hence, it is enough to show $\cH^{p}\left( \gr_{k} \DR \cH^{q} \DD \QQ_{X}^{H'}\right)_{0} = 0$ for $p+ q = n-2k$. 

        Dualizing Theorem \ref{theo:MHM-structure}(2) and \ref{theo:MHM-structure}(3), we have
        \begin{align*}
            \gr_{-n}^{W} \cH^{0} \DD\QQ_{X}^{H'} &\simeq \gr_{k}\DR \IC_{X}^{H}(n),\\
            \gr_{-n+t}^{W} \cH^{q} \DD \QQ_{X}^{H'} &\simeq \bigoplus_{j \geq 1} \bigoplus_{\lambda \in \cP_{t + 2j}} \IC_{S_{\lambda}}^{H} (n-t-j)^{a_{\lambda}^{q, j}}, \qquad \text{for } (q, t)\neq (0,0).
        \end{align*}
        and the second term is zero unless $q+1 \leq t \leq n-2$. Therefore, we get
        \begin{align*}
            \cH^{n-2k} \gr_{k} \DR \gr_{-n}^{W} \cH^{0} \DD\QQ_{X}^{H'} &\simeq \cH^{n-2k} \gr_{k-n}\DR \IC_{X}^{H},\\
            \cH^{n-2k-q} \gr_{k} \DR \gr_{-n+t}^{W} \cH^{q} \DD \QQ_{X}^{H'} &\simeq \bigoplus_{j \geq 1} \bigoplus_{\lambda \in \cP_{t + 2j}} \left(\cH^{n-2k-q} \gr_{k -n+t+j} \DR \IC_{S_{\lambda}}^{H} \right)^{a_{\lambda}^{q,j}}.
        \end{align*}
        As mentioned in \S\ref{sec:ICTV-summary}, $(\cH^{n-2k} \gr_{k-n}\DR \IC_{X}^{H})_0$ is non-zero only when $n-2k<0$, which cannot happen since $k \leq n/2$. Similarly, $\left( \cH^{n-2k-q} \gr_{k-n+t+j} \DR \IC_{S_{\lambda}}^{H}\right)_{0}$ is non-zero only when
		$$ n - 2k - q + 2 (k-n+t+j) = - \dim S_{\lambda} = 2j-n+t.$$
        However, this implies that $q = t$, which cannot happen since $t \geq q+1$. Therefore, we have the required vanishing by considering the spectral sequence associated to the weight filtration.
	\end{proof}

	\begin{rema}
		One can see from this fact that $H^{n-k}(\Ish_{\sigma}^{n-k}) = 0$ for $k \leq n/2$, saying that the last map of this complex is surjective. For $k = 0, 1$, we have a direct combinatorial proof of this fact as in the beginning of this section. However, we were not able to find a direct combinatorial proof for other $k$'s. We remark that it would be interesting if one can find such a combinatorial proof.
	\end{rema}

    \begin{rema}
        Our proof that the last map of the complex $\Ish^{n-k}_{\sigma}$ is surjective for $k \leq n/2$ only works for a \textit{rational} polyhedral cone $\sigma$. It would be interesting to see if the same statement holds for \textit{non-rational} cones as well. More precisely, it is possible to construct some `Ishida-like' complex for non-rational cones and we ask: in this non-rational case, would the same surjectivity statement hold?
    \end{rema}
        
	\subsection{Hodge structure on singular cohomology of proper toric varieties}
    Here, we give a quick proof that the Hodge structure on the singular cohomology of a proper toric variety is mixed of Hodge--Tate type. Before that, we show that the intersection cohomology of a proper toric variety is pure of Hodge--Tate type. This is implicitly written in \cite{saito-toric}*{(1.7.6)} in the projective case, but we give a proof for completeness.
    \begin{lemm}
        Let $X$ be a proper toric variety of dimension $n$. The intersection cohomology
        $$ \IH^{i}(X) := \HH^{i-n}(X, \IC_{X}^{H})$$
        is pure of Hodge--Tate type, of weight $i$.
    \end{lemm}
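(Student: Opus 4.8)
The plan is to reduce to the smooth case via a toric resolution and the Decomposition theorem. Since $X$ is proper, its defining fan $\cP$ is complete; choose a smooth subdivision $\cP'$ (toric resolution of singularities), giving a proper birational toric morphism $\pi \colon \widetilde{X} \to X$ with $\widetilde{X}$ smooth and proper of dimension $n$. The object $\QQ_{\widetilde X}^{H'} = \QQ_{\widetilde X}^{H}[n]$ is a pure Hodge module of weight $n$, so the Decomposition theorem for Hodge modules gives $\pi\lsta \QQ_{\widetilde X}^{H'} \simeq \bigoplus_{j} \bigl(\cH^{j}\pi\lsta \QQ_{\widetilde X}^{H'}\bigr)[-j]$ with each $\cH^{j}\pi\lsta \QQ_{\widetilde X}^{H'}$ pure of weight $n+j$. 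Because $\pi$ is an isomorphism over the open torus, the strict-support-$X$ component of $\cH^{0}\pi\lsta \QQ_{\widetilde X}^{H'}$ is exactly $\IC_{X}^{H}$; hence $\IC_{X}^{H}$ is a direct summand of $\pi\lsta \QQ_{\widetilde X}^{H'}$ in $D^{b}\MHM(X)$.

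Applying $\bfR\Gamma(X,-)$ and using $\bfR\Gamma(X, \bfR\pi\lsta -) = \bfR\Gamma(\widetilde X, -)$, the summand decomposition above shows that $\HH^{i-n}(X, \IC_{X}^{H})$ is a direct summand, as a mixed Hodge structure, of $\HH^{i-n}(X, \pi\lsta \QQ_{\widetilde X}^{H'}) = \HH^{i-n}(\widetilde X, \QQ_{\widetilde X}^{H}[n]) = H^{i}(\widetilde X^{\an}, \QQ)$ (here one uses that hypercohomology of Hodge modules is compatible with the six-functor formalism and with the underlying $\QQ$-structures). Now $\widetilde X$ is a smooth proper toric variety, so $H^{i}(\widetilde X^{\an}, \QQ)$ is pure of weight $i$ and of Hodge--Tate type; this is classical, and also follows from \cite{dCMM-toricmaps}*{Theorem A} exactly as in the proof of Lemma \ref{lemm:stalk-of-IC-pure-HT}. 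Since the category of pure Hodge structures of weight $i$ is abelian and a direct summand of a Hodge--Tate structure is again Hodge--Tate, we conclude that $\IH^{i}(X)$ is pure of weight $i$ and of Hodge--Tate type.

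I note that the purity of $\IH^{i}(X)$ of weight $i$ actually requires no resolution: it is immediate from Saito's theory, since $\IC_{X}^{H}$ has weight $n$ and pushforward along the proper morphism $a_{X}\colon X \to \{\mathrm{pt}\}$ raises weights by the cohomological degree, so $\cH^{i-n}(a_{X\lsta}\IC_{X}^{H})$ is pure of weight $n+(i-n)=i$. Only the Hodge--Tate assertion genuinely uses the argument above, and I do not expect a serious obstacle. The one point deserving care is the bookkeeping of the shift $[n]$ and the verification that the summands coming from $\cH^{j}\pi\lsta \QQ_{\widetilde X}^{H'}$ with $j \neq 0$ do not interfere with the identification of $\IH^{i}(X)$; this is handled by the degree-matching in the Decomposition theorem, and in any case every such summand of $H^{i}(\widetilde X^{\an},\QQ)$ is itself Hodge--Tate, so the conclusion is unaffected.
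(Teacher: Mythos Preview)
Your proof is correct and follows essentially the same approach as the paper: take a toric resolution $\pi\colon \widetilde{X}\to X$, use the Decomposition theorem to exhibit $\IC_{X}^{H}$ as a summand of $\pi\lsta \QQ_{\widetilde{X}}^{H'}$, and then conclude from the fact that $H^{i}(\widetilde{X},\QQ)$ is pure of Hodge--Tate type of weight $i$. Your write-up is simply more detailed (e.g., spelling out the strict-support argument and the remark that purity alone needs no resolution), but the underlying argument is identical.
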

    \begin{proof}
        Let $\pi \colon \widetilde{X} \to X$ be a toric resolution of singularities. Then $\QQ_{\widetilde{X}}^{H'}$ is a pure Hodge module, and $\IC_{X}^{H}$ is a summand of $\pi\lsta \QQ_{\widetilde{X}}^{H'}$ by the Decomposition theorem. Therefore, $\HH^{i-n}(X, \IC_{X}^{H})$ is a summand of $H^{i}(\widetilde{X}, \QQ)$ which is pure of Hodge--Tate type, with weight $i$.
    \end{proof}
    
    We next show that the singular cohomology groups of proper toric varieties are mixed of Hodge--Tate type.
    
    \begin{proof}[Proof of Theorem \ref{coro:sing-coho-HT-type}]
        Let $a : X \to \{ \mathrm{pt}\}$ be the morphism to a point. Note that $H^{k}(X, \QQ) \simeq \cH^{k}(a\lsta \QQ_{X}^{H})$. We use the spectral sequence
        $$ E_{2}^{p,q} = \cH^{p} \left( a\lsta \cH^{q} \QQ_{X}^{H}\right) \implies \cH^{p+q} (a\lsta \QQ_{X}^{H}). $$
        Therefore, it is enough to show that each term in the $E_{2}$-page is mixed of Hodge--Tate type. For this, we use another spectral sequence
        $$ E_{1}^{r, s} = \cH^{r+s} \left( a\lsta \gr_{-r}^{W} \cH^{q} \QQ_{X}^{H}\right) \implies \cH^{r+s} (a\lsta \cH^{q} \QQ_{X}^{H}).$$
        Note that $E_{1}^{r, s}$ is in fact pure of Hodge--Tate type of weight $s$. The purity follows from Saito's Decomposition theorem, and the Hodge--Tate type property follows from Theorem \ref{theo:MHM-structure} and the fact that the intersection cohomology of a proper toric variety is of Hodge--Tate type.
    \end{proof}

    Furthermore, in \cite{LCDTV2}, we describe some techniques for computing the Hodge--Du Bois numbers of toric varieties. Moreover, for a projective toric variety with an ample $\QQ$-line bundle $L$, we relate the Lefschetz morphism $c_{1}(L) \cup (\cdot)$ on singular cohomology with the Ishida complex of an affine toric variety of one-dimensional higher. We state the relevant result used in this article and refer the reader to \cite{LCDTV2} for further details.

    \begin{theo} \cite{LCDTV2}*{Theorem 1.4} \label{theo:Lefschetz}
        Let $X$ be an $n$-dimensional affine toric variety associated to a full-dimensional cone $\sigma$. Let $\rho$ be a rational ray in the interior of $\sigma$ and consider the toric morphism $\pi \colon \widetilde{X} \to X$ corresponding to inserting the ray $\rho$ in $\sigma$. Let $E$ be the projective toric variety given by the inverse image of the torus fixed point. Then we have the long exact sequence
        $$ \ldots \to  \HH^{i-1}(E, \Ish_{E}^{l-1}) \xrightarrow{c_{1}\dual} \HH^{i}(E, \Ish_{E}^{l}) \to H^{i}(\Ish_{\sigma}^{l}) \to \HH^{i}(E, \Ish_{E}^{l-1}) \xrightarrow{c_{1}\dual} \ldots $$
        where $c_{1}\dual \colon \HH^{i-1}(E, \Ish_{E}^{l-1}) \to \HH^{i}(E,\Ish_{E}^{l})$ is dual to $c_1:H^{n-i-1}(E,\duBois^{n-l-1}_E) \to H^{n-i}(E, \duBois^{n-l}_E)$, the Chern class map of the $\QQ$-Cartier divisor class $(-E)|_{E}$ on $E$.
    \end{theo}

        \section{Cones over simplicial polytopes}\label{section:cone-over-simplicial-polytope}
    In this section, we consider the affine toric varieties corresponding to cones over simplicial polytopes. This is precisely the case when the toric variety has an isolated non-simplicial locus.

    \subsection{Structure of $\QQ_{X}^{H'}$}
    We prove Theorem \ref{theo:cone-over-simplicial-polytope} and Theorem \ref{theo:cone-over-simplicial} simultaneously. This fully describes the numbers appearing in the structure theorem, Theorem \ref{theo:MHM-structure} for the above class of toric varieties.
    
    \begin{proof}[Proof of Theorem \ref{theo:cone-over-simplicial-polytope} and Theorem \ref{theo:cone-over-simplicial}]
		Note that for a simplicial toric variety $X$, we have $\QQ_{X}^{H'} \simeq \IC_{X}^{H}$. This implies that the numbers $a_{\lambda}^{l, j}$ in Theorem \ref{theo:MHM-structure} are zero if the face $\lambda$ is simplicial. Hence, the only non-trivial factors in the structure for $\QQ_{X}^{H'}$ are $\IC_{X}^{H}$ and various Hodge structures supported on the torus fixed point $x_{\sigma}$. Let $i :\{ x_{\sigma} \} \hookrightarrow X$ be the inclusion and notice that $i\sta \QQ_{X}^{H'} \simeq \QQ_{x_{\sigma}}^{H} [n]$. We use the spectral sequence
		$$ E_{2}^{p,q} = \cH^{p} (i\sta \cH^{q}\QQ_{X}^{H'}) \implies \cH^{p+q} i\sta \QQ_{X}^{H'} = \begin{cases} \QQ_{x_{\sigma}}^{H} & p+q = -n \\0 &\text{otherwise.} \end{cases} $$
		For $q < 0$, the Hodge module $\cH^{q} \QQ_{X}^{H'}$ is already supported at the point, and therefore the only non-trivial cohomologies for $\cH^{p}(i\sta \cH^{q}\QQ_{X}^{H'})$ is when $p = 0$ for $q < 0$. Notice that $E_{2}^{0,0} = E_{\infty}^{0, 0}$ and $E_{2}^{-1,0} = E_{\infty}^{-1,0}$ and therefore they are both zero. We have the canonical surjection $\cH^{0}\QQ_{X}^{H'} \to \IC_{X}^{H}$ which is an isomorphism outside $x_{\sigma}$. Consider
		$$ 0 \to \cK \to \cH^{0}\QQ_{X}^{H'} \to \IC_{X}^{H} \to 0$$
		where $\cK$ is a Hodge module supported at $x_{\sigma}$. Since $E_{2}^{-1,0} = E_{2}^{0,0} = 0$, we have the isomorphism
		$$ \cK \simeq \cH^{-1} i\sta \IC_{X}^{H} $$
		and
		$$ \cH^{p} i\sta \cH^{0}\QQ_{X}^{H'} \simeq \cH^{p} i\sta \IC_{X}^{H} \quad \text{for } p \leq -2. $$
		Since the spectral sequence converges to zero except when $p + q = -n$, we see that
		$$ d_{-p}^{p,0} : \cH^{p} i\sta \cH^{0} \QQ_{X}^{H'} \to \cH^{0} i\sta \cH^{p+1} \QQ_{X}^{H'} $$
		has to be an isomorphism for $-n+1 \leq p \leq -2$, and therefore,
		$$ \cH^{-l}\QQ_{X}^{H'} \simeq \cH^{-l-1} i\sta \IC_{X}^{H} $$
		for $1 \leq l \leq n-2$. We use the result of \cite{KV:ICTV} to compute the cohomology of the stalks of $\IC_{X}^{H}$. Let $f_{i}$ be the number of $i$-dimensional faces of $\sigma$, and let
		$$ h_{j} = \sum_{l = j}^{n-1}(-1)^{l-j} {l \choose j} f_{n-1-l}. $$
		The numbers $h_{j}$ encode the Betti numbers of a projective simplicial toric variety, hence these numbers satisfy Hard Lefschetz and Poincaré duality property \cite{Stanley:Intersection-cohomology-toric-varieties}.
        
        For simplicity, we divide the case by the parity of $n$. First, consider the case when $n$ is even. Let $n = 2k$. Then we have
		$$ 1 = h_{0} \leq h_{1} \leq \ldots \leq h_{k-1} = h_{k} \geq h_{k+1} \geq \ldots h_{n-1} = 1.$$
		Moreover, we have
		$$ \cH^{-n+2l} i\sta \IC_{X}^{H} = \QQ_{x_{\sigma}}^{H}(-l)^{h_{l} - h_{l-1}}$$
		for $l < k$ and all the other cohomologies are zero. We use the convention that $h_{-1} = 0$. This shows that
		\begin{align*}
			\cH^{0} \QQ_{X}^{H'} & \simeq \IC_{X}^{H} \\
			\cH^{-n+2l+1} \QQ_{X}^{H'} & \simeq \QQ_{x_{\sigma}}^{H}(-l)^{h_{l} - h_{l-1}} \text{ for } l = 1, \ldots, k-1.
		\end{align*}
		
		Consider the case when $n$ is odd. Let $n = 2k + 1$. Then we have
		$$ 1 = h_{0} \leq h_{1} \leq \ldots \leq h_{k-1} \leq h_{k} \geq h_{k+1} \geq \ldots \geq h_{n-1} = 1.$$
		Here, we have that
		$$ \cH^{-n + 2l} i\sta \IC_{X}^{H} = \QQ_{x_{\sigma}}^{H}(-l)^{h_{l} - h_{l-1}}$$
		for $0 \leq l \leq k$ with the same convention $h_{-1} = 0$. This implies
		\begin{align*}
			\gr_{n}^{W} \cH^{0} \QQ_{X}^{H'} & \simeq \IC_{X}^{H} \\
			\gr_{n-1}^{W} \cH^{0}\QQ_{X}^{H'} & \simeq \QQ_{x_{\sigma}}^{H}(-k)^{h_{k}- h_{k-1}} \\
			\cH^{-n+2l+1} \QQ_{X}^{H'} & \simeq \QQ_{x_{\sigma}}^{H}(-l)^{h_{l} - h_{l-1}} \text{ for } l = 1, \ldots, k-1.
		\end{align*}
		In both cases, we have $\cH^{-n+3} \QQ_{X}^{H'} \simeq \QQ_{x_{\sigma}}^{H}(-1)^{h_{1} - h_{0}} \neq 0$ as soon as $\sigma$ is non-simplicial since $h_{1} - h_{0} = f_{1} - n$.
	\end{proof}

    \subsection{Grothendieck dual of $\duBois_{X}^{p}$} We prove Proposition \ref{prop:Gro-dual-isolated-non-simplicial}.
    \begin{proof}[Proof of Proposition \ref{prop:Gro-dual-isolated-non-simplicial}]
        We insert a ray $\rho$ in the interior of $\sigma$ and consider the corresponding toric morphism $\pi \colon \widetilde{X} \to X$. Let $E$ be the inverse image of the torus fixed point of $X$. We see that $E$ is a projective simplicial toric variety of dimension $n-1$. Note that the singular cohomology of $E$ is pure of Hodge--Tate type by \cite{dCMM-toricmaps}*{Theorem A}, and $\HH^{i}(E, \Ish_{E}^{l}) \simeq H^{n-1-i}(E, \duBois_{E}^{n-1-l})\dual$ by Grothendieck duality. This shows in particular that $\dim \HH^{i}(\Ish_{E}^{j}) = \delta_{ij} h_{i}$ (see \cite{dCMM-toricmaps}*{Corollary C}).
        By Theorem \ref{theo:Lefschetz}, we see that we have the exact sequence
        $$ 0 \to H^{l-1}(\Ish_{\sigma}^{l}) \to \HH^{l-1}(\Ish_{E}^{l-1}) \xrightarrow{c_{1}\dual} \HH^{l}(\Ish_{E}^{l}) \to H^{l}(\Ish_{\sigma}^{l}) \to 0,$$
        and $H^{i}(\Ish_{\sigma}^{l}) = 0$ for $i \neq l-1, l$. By hard Lefschetz, we see that $c_{1}\dual$ is injective if $2(l-1) < n-1$ and surjective if $2l > n-1$. 
        We consider the case when $n = 2k$ is even. Then we have
        \begin{align*}
            \dim H^{l}(\Ish_{\sigma}^{l}) & = h_{l} - h_{l-1} \qquad \text{if } l < k \\
            \dim H^{l-1}(\Ish_{\sigma}^{l}) &= h_{l-1} - h_{l} \qquad \text{if } l > k.
        \end{align*}
        This implies the assertion when $n$ is even. One can deduce the case when $n$ is odd similarly.
    \end{proof}

    \subsection{The graded de Rham complex of the intersection cohomology Hodge module} We compare the Du Bois complex and the graded de Rham complex of the intersection cohomology in this case.
    \begin{prop} \label{prop:isolatednonsimplicial-IDuBois-isomorphism}
        Let $X$ be a toric variety of dimension $n$ with isolated non-simplicial locus. Then for $p \geq n/2$, the natural morphism
        $$ \duBois_{X}^{p}[n-p] \simeq \gr_{-p} \DR \QQ_{X}^{H'} \to \gr_{-p} \DR \IC_{X}^{H}$$
        is an isomorphism. For $p \leq n/2$, the natural morphism
        $$ \gr_{-p} \DR \IC_{X}^{H} \to \gr_{n-p} \DR \DD \QQ_{X}^{H'} \simeq \bfR\SheafHom_{\cO_{X}}(\duBois_{X}^{n-p}, \omega_{X}) [n-p]$$
        is an isomorphism.
    \end{prop}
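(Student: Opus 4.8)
The plan is to deduce the second isomorphism from the first by Grothendieck--Verdier duality, so the real content is the statement for $p\ge n/2$. Since $X$ has an isolated non-simplicial locus, $X\setminus\{x_\sigma\}$ has at worst quotient singularities, hence is a $\QQ$-homology manifold there, and the canonical morphism $f\colon\QQ_X^{H'}\to\IC_X^H$ is an isomorphism over $X\setminus\{x_\sigma\}$. Completing $f$ to an exact triangle $C\to\QQ_X^{H'}\xrightarrow{\,f\,}\IC_X^H\xrightarrow{+1}$ in $D^b\MHM(X)$, the object $C$ is therefore supported at $x_\sigma$. Because $\gr_{-p}\DR$ is exact and commutes with cones, the first isomorphism --- together with the identification $\gr_{-p}\DR\QQ_X^{H'}\simeq\duBois_X^{p}[n-p]$ recalled in \S\ref{section:hodge-modules} --- is equivalent to the vanishing $\gr_{-p}\DR C=0$ for $p\ge n/2$, which is what I would establish.

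To do so, first identify the cohomology of $C$. Applying $i_{x_\sigma}^{*}$ to the triangle and using $i_{x_\sigma}^{*}\QQ_X^{H'}\simeq\QQ_{x_\sigma}^{H}[n]$, the long exact sequence of cohomologies gives $\cH^{-l}C\simeq\cH^{-l-1}\bigl(i_{x_\sigma}^{*}\IC_X^{H}\bigr)$ --- this is precisely the computation made in the proof of Theorem~\ref{theo:cone-over-simplicial}. By Theorem~\ref{theo:MHM-structure}, $C$ has cohomology only in degrees $[-(n-3),0]$, so $l$ runs over $0,\dots,n-3$. Lemma~\ref{lemm:stalk-of-IC-pure-HT} then says $\cH^{-l-1}(i_{x_\sigma}^{*}\IC_X^{H})$ is pure of Hodge--Tate type and weight $n-l-1$, so $\cH^{-l}C$ is a direct sum of copies of $\QQ_{x_\sigma}^{H}(-m)$ with $2m=n-l-1$; in particular $0\le m\le (n-1)/2<n/2$.

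Next, since the graded de Rham functor commutes with pushforward along the closed embedding $\{x_\sigma\}\hookrightarrow X$ \cite{saito1988modulesdeHodge}*{2.3.7} and the de Rham complex on a point is trivial, one computes $\gr_{k}\DR\bigl(i_{x_\sigma\ast}\QQ_{x_\sigma}^{H}(-m)\bigr)\simeq\CC_{x_\sigma}$ for $k=-m$ and $0$ otherwise --- the Tate twist merely relocates the single nonzero graded piece of the Hodge filtration. Hence for every $l\in\{0,\dots,n-3\}$ and every $p\ge n/2$ we have $p>m=(n-l-1)/2$, so $\gr_{-p}\DR(\cH^{-l}C)=0$. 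Feeding this into the hypercohomology spectral sequence $E_2^{a,b}=\cH^{a}\bigl(\gr_{-p}\DR\,\cH^{b}C\bigr)\Rightarrow\cH^{a+b}(\gr_{-p}\DR C)$ forces $\gr_{-p}\DR C=0$, which proves the first isomorphism.

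For $p\le n/2$ I would apply $\bfR\SheafHom_{\cO_X}(-,\omega_X^{\bullet})$ to the (already proven) isomorphism $\gr_{-(n-p)}\DR f$, which is legitimate since $n-p\ge n/2$. Using $\gr_{k}\DR\DD(\cM)\simeq\bfR\SheafHom_{\cO_X}(\gr_{-k}\DR\cM,\omega_X^{\bullet})$, the self-duality $\DD\IC_X^{H}\simeq\IC_X^{H}(n)$, the rule $\gr_{k}\DR(\cM(r))\simeq\gr_{k-r}\DR\cM$, the identification $\gr_{-(n-p)}\DR\QQ_X^{H'}\simeq\duBois_X^{n-p}[p]$, and $\omega_X^{\bullet}=\omega_X[n]$, the resulting map is exactly the natural morphism $\gr_{-p}\DR\IC_X^{H}\to\gr_{n-p}\DR\DD\QQ_X^{H'}\simeq\bfR\SheafHom_{\cO_X}(\duBois_X^{n-p},\omega_X)[n-p]$, so it is an isomorphism. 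I expect the only genuine obstacle to be bookkeeping: keeping the Tate twists, shifts, and sign conventions for $\gr_{k}\DR$ consistent, and checking that the morphism produced by dualizing the first isomorphism coincides with the ``natural'' one named in the second --- which amounts to the functoriality of $\gr_{\bullet}\DR\DD$ applied to $f$.
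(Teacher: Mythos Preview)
Your argument is correct and follows the same line as the paper's proof: both reduce to showing that the cone of $\QQ_X^{H'}\to\IC_X^H$ is built from $\QQ_{x_\sigma}^H(-j)$ with $j<n/2$, whence $\gr_{-p}\DR$ kills it for $p\ge n/2$, and then dualize using $\DD\IC_X^H\simeq\IC_X^H(n)$. The only difference is that the paper invokes Theorem~\ref{theo:cone-over-simplicial} directly to read off the bound $j<n/2$, whereas you re-derive it by identifying $\cH^{-l}C\simeq\cH^{-l-1}(i_{x_\sigma}^*\IC_X^H)$ and appealing to Lemma~\ref{lemm:stalk-of-IC-pure-HT}; this is slightly more self-contained but amounts to repeating part of that theorem's proof.
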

    \begin{proof}
        It is enough to show the first assertion since $\DD \IC_{X}^{H} \simeq \IC_{X}^{H}(n)$ and the second statement follows from the first by dualizing. By Theorem \ref{theo:cone-over-simplicial}, we see that the only terms appearing in the structure theorem are $\QQ_{x_{\sigma}}^{H}(-j)$ for $j < n/2$. Since $\gr_{-p}\DR \QQ_{x_{\sigma}}^{H} (-j) = 0$ for $j < n/2 \leq p$, the assertion follows.
    \end{proof}
    \begin{rema}
        In \cite{Popa-Park:lefschetz}, they study the `condition $(\ast)_{k}$' requiring isomorphisms between the Du Bois complex and the (shifted) graded de Rham complex of the intersection cohomology up to a certain level. We point out that the range of the isomorphism we have in the proposition above is directly opposed to this condition.
    \end{rema}

    \section{Cones over simple polytopes}\label{section:cone-over-simple-polytope}
    We now discuss the case of a cone over a simple polytope. In fact, the affine toric varieties associated to such cones are precisely those for which all the torus-invariant divisors are simplicial.

    \subsection{Finer control on depth of $\duBois_{X}^{l}$}
    First, we prove Theorem \ref{theo:simple-upperbound-depth}. We show that it is enough to prove the following lemma.
	
	\begin{lemm} \label{lemm:exactness-of-simple-dim-c}
		Let $\sigma$ be a cone, which is simple in dimension $c$. For each $l$, consider the complex
		$$ \Ish_{\sigma}^{l}: V_{0}^{l} \to \bigoplus_{\mu\lind{1} \in \cP_{1}} V_{\mu\lind{1}}^{l} \to \bigoplus_{\mu\lind{2} \in \cP_{2}} V_{\mu\lind{2}}^{l} \to \ldots \to \bigoplus_{\mu\lind{l} \in \cP_{l}} V_{\mu\lind{l}}^{l} $$
		sitting in cohomological degrees 0 to $l$. Then $H^{i}(\Ish_{\sigma}^{l}) = 0$ for $i > c$.
	\end{lemm}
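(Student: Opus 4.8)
The plan is to prove the following more flexible statement, of which the lemma is the case $\cF=\varnothing$: if $\tau$ is a cone that is simple in dimension $c'$, if $\cF$ is any collection of faces of $\tau$, and if $l'$ is any integer, then the \emph{restricted Ishida complex} $D(\tau,\cF,l')$ --- defined as the quotient of $\Ish_{\tau}^{l'}$ by the subcomplex spanned by those $V_{\mu}^{l'}$ with $\mu$ containing some member of $\cF$ (this really is a subcomplex, since $\gamma\subseteq\mu\subseteq\mu'$ forces $\gamma\subseteq\mu'$) --- satisfies $H^{i}(D(\tau,\cF,l'))=0$ for $i>c'$. I would argue by induction on $\dim\tau$ (the case $\dim\tau=0$ being vacuous). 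If $\tau$ is simplicial, i.e. $c'=0$: the faces of $\tau$ containing no member of $\cF$ form a subfan of the fan of faces of $\tau$, hence an open toric subvariety $U_{\cF}\subseteq X_{\tau}$, and one checks that $D(\tau,\cF,l')$ is precisely the degree-$0$ part (for the $M$-grading) of the Ishida complex $\Ish_{U_{\cF}}^{l'}$. Since $U_{\cF}$ has quotient singularities, the cohomology sheaves of its Ishida complex vanish in positive degrees by \cite{Mustata-Popa22:Hodge-filtration-local-cohomology}*{Corollary 4.29}, using that the Ishida complex computes $\bfR\SheafHom_{\cO_{X}}$ of the reflexive differentials into $\omega_{X}$ for arbitrary toric varieties (the remark following Theorem \ref{theo:Gro-dual-of-Ishida-is-duBois}); taking degree-$0$ parts finishes this case.

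\textbf{The inductive step.} Now suppose $c'\ge 1$, and run a secondary induction on the number of rays of $\tau$ not contained in $\cF$. If $\cF$ contains every ray of $\tau$, then every positive-dimensional face contains a forbidden ray, so $D(\tau,\cF,l')$ is concentrated in degree $0$ and there is nothing to prove. Otherwise pick a ray $\rho$ of $\tau$ with $\rho\notin\cF$. The subspace $A^{\bullet}$ spanned by those $V_{\mu}^{l'}$ with $\rho\subseteq\mu$ (and $\mu$ avoiding $\cF$) is a subcomplex of $D(\tau,\cF,l')$ whose quotient is $D(\tau,\cF\cup\{\rho\},l')$, giving a short exact sequence $0\to A^{\bullet}\to D(\tau,\cF,l')\to D(\tau,\cF\cup\{\rho\},l')\to 0$. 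Using the bijection between faces of $\tau$ containing $\rho$ and faces of the star cone $\overline\tau_{\rho}$ (the image of $\tau$ in $\langle\tau\rangle/\langle\rho\rangle$), together with the identifications $\mu^{\perp}=\overline\mu^{\perp}$ and $V_{\mu}^{l'}\cong V_{\overline\mu}^{l'-1}$ and the fact that each $n_{\mu,\mu'}$ descends modulo $\langle\rho\rangle$, I would identify $A^{\bullet}$ with $D(\overline\tau_{\rho},\overline\cF,l'-1)[-1]$, where $\overline\cF=\{\,\overline{\gamma\vee\rho}:\gamma\in\cF\,\}$ and $\gamma\vee\rho$ denotes the smallest face of $\tau$ containing both $\gamma$ and $\rho$. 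Since $\overline\tau_{\rho}$ has dimension $\dim\tau-1$ and is simple in dimension $c'-1$ (its $(c'-1)$-dimensional faces correspond to the $c'$-dimensional faces of $\tau$ through $\rho$, and the relevant projections of $\overline\tau_{\rho}$ coincide with projections of $\tau$ that are simplicial by hypothesis), the outer induction gives $H^{j}(D(\overline\tau_{\rho},\overline\cF,l'-1))=0$ for $j>c'-1$, hence $H^{i}(A^{\bullet})=0$ for $i>c'$; the inner induction gives $H^{i}(D(\tau,\cF\cup\{\rho\},l'))=0$ for $i>c'$; and the long exact sequence then forces $H^{i}(D(\tau,\cF,l'))=0$ for $i>c'$, closing both inductions.

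\textbf{Routine parts and the main obstacle.} The routine verifications are that the indicated spans are genuinely sub- or quotient complexes and that the face-poset bijection $\mu\leftrightarrow\overline\mu$ intertwines the maps $\varphi_{\mu,\mu'}^{l'}$ with the corresponding maps for $\overline\tau_{\rho}$; these are straightforward from Definitions \ref{defi:V-mu-l}--\ref{defi:Ishida-degree-0}. I expect the main obstacle to be the bookkeeping of the forbidden-face set under the star operation: passing from $\tau$ to $\overline\tau_{\rho}$ replaces a forbidden ray $\gamma$ not by its image but by the (possibly higher-dimensional) face $\overline{\gamma\vee\rho}$, which is exactly why the induction cannot be run with $\cF$ restricted to sets of rays and why the generalization to arbitrary collections $\cF$ of faces is forced on us. The other delicate point is the simplicial base case: one must be comfortable applying the Ishida-complex/Grothendieck-duality formalism of \S\ref{section:Ishida-complex}, and \cite{Mustata-Popa22:Hodge-filtration-local-cohomology}*{Corollary 4.29}, to the open toric varieties $U_{\cF}$, which are in general neither affine nor complete.
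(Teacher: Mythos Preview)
Your inductive strategy has a genuine gap: the generalized statement you set up is \emph{false}, so the induction cannot close. Take $\tau$ a $2$-dimensional simplicial cone (so $c'=0$), $l'=2$, and $\cF=\{\tau\}$. Then $D(\tau,\{\tau\},2)$ is the two-term complex
\[
\textstyle\bigwedge^{2}M_{\RR}\;\longrightarrow\;\rho_{1}^{\perp}\oplus\rho_{2}^{\perp},
\]
which is $\RR\to\RR^{2}$ with one-dimensional cokernel; hence $H^{1}(D(\tau,\{\tau\},2))\simeq\RR\neq 0$, contradicting your claim that $H^{i}=0$ for $i>c'=0$. The error sits precisely in your simplicial base case: when $\cF$ is nontrivial the variety $U_{\cF}$ is not affine (here $U_{\{\tau\}}=X_{\tau}\setminus\{x_{\tau}\}$), so although the cohomology \emph{sheaves} $\cH^{i}(\Ish_{U_{\cF}}^{l'})$ vanish for $i>0$, passing to ``degree-$0$ parts'' amounts to applying global sections, which is not exact on non-affine varieties. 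Thus vanishing of the sheaf cohomology does not transfer to the finite-dimensional complex $D(\tau,\cF,l')$.

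This bad case is actually reached by your induction. For the $3$-dimensional cone over a square (simple in dimension $1$), with rays $\rho_{1},\rho_{2},\rho_{3},\rho_{4}$ and $\rho_{1},\rho_{3}$ non-adjacent, the inner induction starting from $\cF=\{\rho_{1}\}$ and picking $\rho=\rho_{3}$ produces $\overline{\cF}=\{\overline{\rho_{1}\vee\rho_{3}}\}=\{\overline{\tau}_{\rho_{3}}\}$, landing exactly on the counterexample above when $l'=3$. You anticipated that $\overline{\cF}$ may contain higher-dimensional faces, but the upshot is worse than bookkeeping: once $\overline{\cF}$ contains the full cone, the target vanishing simply fails. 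A repair would require restricting $\cF$ to a class stable under the star operation \emph{and} for which the simplicial case is actually true; it is not clear what that class should be. For comparison, the paper proceeds very differently: using simplicity in dimension $c$ it constructs, for each $m\geq c$, an explicit direct-sum decomposition $V_{\mu}^{l}=\bigoplus_{\tau\in\cP_{l}^{\supset\mu}}V_{\mu,\tau}$ into one-dimensional pieces compatible with the maps $\varphi_{\mu,\lambda}$, and then runs a lexicographic shelling argument on chains of faces to exhibit preimages.
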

	
	\begin{proof}[Proof of Theorem \ref{theo:simple-upperbound-depth}]
		By Theorem \ref{theo:Gro-dual-of-Ishida-is-duBois}, we see that $\bfR\SheafHom_{\cO_{X}}(\duBois_{X}^{n-l}, \omega_{X}) \simeq \Ish_{X}^{l}$. Fix $u \in \tau_{\circ}\sta$ for some face $\tau \in \cP$. By Lemma \ref{lemm:grade-parts-of-Ishida-complex}, the degree $u$-piece of $\tau_{\circ}\sta$ is described as
		$$ \bigoplus_{j = 0}^{l} \bigwedge^{j} \tau^{\perp} \otimes \Ish_{\tau}^{l-j}.$$
		Note that $\tau$ is also simple in dimension $c$ (if $d_{\tau} \leq c$, there is nothing to prove). Hence, applying Lemma \ref{lemm:exactness-of-simple-dim-c} to $\tau$, we get the required vanishing. Hence $\SheafExt_{\cO_{X}}^{i}(\duBois_{X}^{n-l}, \omega_{X}) = 0$ for all $l$ and all $i > c$. In particular, $\lcdef(X) \leq c-1$ if $c \geq 1$ by Theorem \ref{theo:MP-lcd-intermsof-DuBois}.
	\end{proof}
	
	We now prove Lemma \ref{lemm:exactness-of-simple-dim-c}.
	\begin{proof}
		The strategy of the proof is mainly twofold. First, for $m \geq c$, and $\mu \subset \tau$ such that $\tau\in \cP_{l}$ and $\mu \in \cP_{m}$, we construct one-dimensional subspaces $V_{\mu, \tau}$ of $V_{\mu}^{l}$. This will serve as the correct basis that we work. After that, we use the \textit{lexicographic shelling order} explained in \S\ref{section:shelling} to represent an element in the kernel of the map $(\Ish_{\sigma}^{l})^{m+1} \to (\Ish_{\sigma}^{l})^{m+2}$ by a linear combination of the images of the vectors in $V_{\mu,\tau}$.
		
		We construct the one-dimensional vector spaces $V_{\mu, \tau}$ that satisfy the following properties:
		\begin{enumerate}
			\item For $\mu \in \cP_{m}$, we have $V_{\mu}^{l} \simeq \bigoplus_{\tau \in \cP_{l}^{\supset \mu}} V_{\mu, \tau}$.
			\item Let $\mu \subset \lambda$ such that $\mu \in \cP_{m}$ and $\lambda \in \cP_{m+1}$. Suppose $\mu \subset \tau$. Then $\varphi_{\mu,\lambda} : V_{\mu}^{l} \to V_{\lambda}^{l}$ sends $V_{\mu, \tau}$ isomorphically to $V_{\lambda,\tau}$ if $\lambda \subset \tau$. If $\lambda \not \subset \tau$, then $\varphi_{\mu,\lambda}$ sends $V_{\mu, \tau}$ to zero.
		\end{enumerate}
		We do this inductively on $m$ in a decreasing manner. For $m = l$, we simply put $V_{\tau, \tau} = V_{\tau}^{l}$. For $m = l-1$, pick $\mu \in \cP_{l-1}$. Then we observe that
		$$ \mu^{\perp} \to \bigoplus_{\tau \in \cP_{l}^{\supseteq \mu}} \RR_{\tau} $$
		is an isomorphism. Here, $\RR_{\tau} = V_{\tau}^{l} = \RR$ is a one-dimensional vector space. Indeed, one can easily check that this morphism is injective since $\mu^{\perp} = \cap_{\tau \in \cP_{l}^{\supset \mu}} \tau^{\perp}$, and the dimension matches up since $|\cP_{l}^{\supset \mu}| = n-l+1$ by the simpleness assumption. We simply take $V_{\mu, \tau}$ as the pre-image of $\RR_{\tau}$. Note that for $V_{\mu, \tau}$, the natural evaluation map to all the other $\RR_{\tau'}$ are zero, and $V_{\mu, \tau} \xrightarrow{\simeq} \RR_{\tau}$. We suppose that $V_{\mu, \tau}$ is constructed for the faces with dimension $m + 1, \ldots, l$ satisfying the properties above, and also assume $m + 2\leq l$.
		
		\noindent
		By Lemma \ref{lemm:exact-sequence-from-simplicial}, the following sequence is exact:
		$$ V_{\mu}^{l} \to \bigoplus_{\lambda\lind{m+1} \in \cP_{m+1}^{\supset \mu}} V_{\lambda\lind{m+1}}^{l} \to \ldots \to \bigoplus_{\lambda\lind{l} \in \cP_{l}^{\supset \mu}} V_{\lambda\lind{l}}^{l} .$$
		Pick $\tau \in \cP_{l}^{\supset \mu}$. Note that there are exactly $l-m$ elements $\lambda_1,\ldots, \lambda_{l-m}$ of $\cP_{m+1}^{\supset \mu} \cap \cP_{m+1}^{\subset \tau}$. We see that $\cP_{m+2}^{\supset \mu} \cap \cP_{m+2}^{\subset \tau} = \{ \lambda_{ij}\}_{1 \leq i < j \leq l-m}$, where $\lambda_{ij}$ is the unique face both containing $\lambda_{i}$ and $\lambda_{j}$. Pick a non-zero vector $v_{1} \in V_{\lambda_1, \tau}^{l}$. Then for $j = 2, \ldots, l-m$, we have unique non-zero vectors $v_{j} \in V_{\lambda_{j}, \tau}$ such that
		$$ \varphi_{\lambda_{1}, \lambda_{1j}} (v_{1}) + \varphi_{\lambda_{j}, \lambda_{1j}}(v_{j}) = 0,$$
		since $V_{\lambda_{1},\tau}$ and $V_{\lambda_{j}, \tau}$ both map isomorphically to $V_{\lambda_{1j}, \tau}$. View $\sum_{i=1}^{l-m} v_{i}$ as an element in $\bigoplus_{\lambda\lind{m+1} \in \cP_{m+1}^{\supset \mu}} V_{\lambda\lind{m+1}}^{l}$ in the complex above. For $2 \leq i < j \leq l-m$, let $\lambda_{1ij}\in \cP_{m+2}$ be the unique face containing $\lambda_{1}, \lambda_{i}$, and $\lambda_{j}$. Then,
		\begin{align*}
			& \varphi_{\lambda_{ij}, \lambda_{1ij}}(\varphi_{\lambda_{i}, \lambda_{ij}}(v_{i}) + \varphi_{\lambda_{j}, \lambda_{ij}}(v_{j})) \\ &+ \varphi_{\lambda_{1i},\lambda_{1ij}}(\varphi_{\lambda_{1},\lambda_{1i}}(v_1) + \varphi_{\lambda_{i}, \lambda_{1i}}(v_{i})) \\ &+ \varphi_{\lambda_{1j},\lambda_{1ij}}(\varphi_{\lambda_{1},\lambda_{1j}}(v_1) + \varphi_{\lambda_{j}, \lambda_{1j}}(v_{j})) = 0 \in V_{\lambda_{1ij}}^{l}.
		\end{align*}
		Since the second and the third term are zeros, we know that
		$$ \varphi_{\lambda_{i}, \lambda_{ij}}(v_{i}) + \varphi_{\lambda_{j}, \lambda_{ij}}(v_{j}) = 0$$
		since $\varphi_{\lambda_{ij}, \lambda_{1ij}} : V_{\lambda_{ij}, \tau} \to V_{\lambda_{1ij}, \tau}$ is an isomorphism. This shows that $\sum_{j=1}^{l-m} v_{j}$ is in the kernel of the map $\bigoplus_{\lambda\lind{m+1} \in \cP_{m+1}^{\supset \mu}} V_{\lambda\lind{m+1}}^{l} \to \bigoplus_{\lambda\lind{m+2} \in \cP_{m+2}^{\supset \mu}} V_{\lambda\lind{m+2}}^{l} $. Hence, there exists a non-zero element in $v_{\mu, \tau} \in V_{\mu}^{l}$ mapping to this element. We define $V_{\mu, \tau}$ to be the vector space spanned by $v_{\mu, \tau}$. It is clear that this subspace satisfies the second property. Also, consider the expression
		$$ \bigoplus_{\lambda \in \cP_{m+1}^{\supset \mu}} V_{\lambda}^{l} = \bigoplus_{\tau \in \cP_{l}^{\supset \mu}} \bigoplus_{\lambda \in \cP_{m+1}^{\subset \tau} \cap \cP_{m+1}^{\supset \mu}} V_{\lambda, \tau}. $$
		We see that each $V_{\mu, \tau}$ is mapped to $\bigoplus_{\lambda \in \cP_{m+1}^{\subset \tau} \cap \cP_{m+1}^{\supset \mu}} V_{\lambda, \tau}$. This shows that $\{V_{\mu, \tau}\}_{\tau \in \cP_{l}^{\supset \mu}}$ are linearly independent. Note that $\dim V_{\mu}^{l} = {n-m \choose l-m} = |\cP_{l}^{\supset \mu}|$. Therefore, we see that $V_{\mu, \tau}$ should also satisfy the first property as well. This finishes the construction of the spaces $V_{\mu, \tau}$.
		
		The second step is to show the exactness of $\Ish_{\sigma}^{l}$ on cohomological degree $m+1 > c$. Consider an element in the kernel of the map
		$$ \bigoplus_{\lambda \in \cP_{m+1}} V_{\lambda}^{l} \to \bigoplus_{\nu \in \cP_{m+2}} V_{\nu}^{l}. $$
		Using $V_{\lambda} = \bigoplus_{\tau \in \cP_{l}^{\lambda \subseteq}} V_{\lambda, \tau}$ and similarly for $V_{\nu}$, we write an element in the kernel as $ \sum_{\tau}\sum_{\lambda \subseteq \tau} v_{\lambda, \tau}$, where $v_{\lambda, \tau} \in V_{\lambda, \tau}$. Note that for fixed $\tau$, the summand $\sum_{\lambda \subset \tau} v_{\lambda, \tau}$ maps to $\bigoplus_{\nu \in \cP_{m+2}^{\subseteq \tau}} V_{\nu, \tau}$, hence lies in the kernel as well. Therefore, we work with a fixed $\tau$.
		
		We consider a lexicographic shelling order $\prec_{\mathrm{lex}}$ on the set
		$$ \Delta = \Big\{ (\tau,\rho\lind{l-1},\ldots, \rho\lind{m+1}) : \tau \supset \rho\lind{l-1}\supset \ldots \supset \rho\lind{m+1}, d_{\rho\lind{i}} = i \Big\},$$
		which is described in \S\ref{section:shelling}. We consider the subset
		$$ \cS = \{(\tau, \rho\lind{l-1},\ldots, \rho\lind{m+1}) \in \Delta : v_{\rho\lind{m+1}, \tau} \neq 0 \}$$
		and consider the maximal element $(\tau, \rho\lind{l-1},\ldots, \rho\lind{m+1})$ of $\cS$ (if $\cS$ is empty, we have nothing to show). First, we claim that $\rho\lind{m+1}$ cannot be the minimal element with respect to $\prec_{\tau,\rho\lind{l-1},\ldots, \rho\lind{m+2}}$. This is because $v_{\rho\lind{m+1}, \tau} \neq 0$, hence there should be a facet $\widetilde{\rho}$ of $\rho\lind{m+2}$ such that $v_{\widetilde{\rho}, \tau}\neq 0$ as well. This violates the maximality. Hence, there exists some facet $\widetilde{\rho}\lind{m+1}$ of $\rho\lind{m+2}$ such that $\rho = \rho\lind{m+1} \cap \widetilde{\rho}\lind{m+1} \in \cP_{m}$ and $\widetilde{\rho}\lind{m+1} \prec_{\tau,\rho\lind{l-1},\ldots, \rho\lind{m+2}} \rho\lind{m+1}$. The goal is to show that
        $$(\tau, \rho\lind{l-1}',\ldots, \rho\lind{m+1}') \prec_{\mathrm{lex}} (\tau, \rho\lind{l-1},\ldots, \rho\lind{m+1})$$
        for all chains satisfying $(\tau, \rho\lind{l-1}',\ldots, \rho\lind{m+1}') \in \Delta$ such that $\rho\lind{m+1}' \supset \rho$. If this is the case, we can pick an element $v \in V_{\rho, \tau}$ that maps to $v_{\rho\lind{m+1}, \tau}$ via $\varphi_{\rho, \rho\lind{m+1}}$. After subtracting the image of $v$ from $\sum_{\lambda \subset \tau}$, we see that the maximum of the set $\cS$ descreases, since all the other chains from $\tau$ to $\rho$ are strictly smaller than $(\tau, \rho\lind{l-1}, \ldots, \rho\lind{m+1})$. This shows the exactness. Therefore, it remains to show this combinatorial fact.
        
        We consider the poset $\cP^{\rho \subseteq} \cap \cP^{\subseteq \tau}$, which is isomorphic to the poset of all subsets of $\{1, \ldots, l-m\}$ due to the simpleness assumption. The face $\rho$ corresponds to $\emptyset$ and the chain $\rho \subset \rho\lind{m+1} \subset \ldots \subset \tau$ corresponds to the chain
        $$ \emptyset \subset \{ 1\} \subset \{ 1, 2\} \subset \ldots \subset \{ 1, 2, \ldots, l -m \}.$$
        It is easy to see that for $k = m+1,\ldots, l-1$, there is a unique element $\widetilde{\rho}\lind{k}$ in $\cP_{k}$ that is contained in $\rho\lind{k+1}$, but does not contain $\rho\lind{m+1}$, with the convention that $\rho\lind{l} = \tau$. Indeed, using the correspondence above, $\widetilde{\rho}\lind{k}$ corresponds to subset $\{2, \ldots, k-m+1\}$. Also, it is easy to see that $\widetilde{\rho}\lind{m+1}$ is the one we mentioned before, and
		$$\rho \subseteq \widetilde{\rho}\lind{m+1} \subseteq \ldots \subseteq \widetilde{\rho}\lind{l-1}.$$
		Since $\widetilde{\rho}\lind{m+1} \prec_{\tau,\ldots,\rho\lind{m+2}} \rho\lind{m+1}$ and $\rho\lind{m+2} \cap \widetilde{\rho}\lind{m+2} = \widetilde{\rho}\lind{m+1}$, we have $\widetilde{\rho}\lind{m+2} \prec_{\tau,\ldots, \rho\lind{m+3}} \rho\lind{m+2}$. Inductively, we get
		$$ \widetilde{\rho}\lind{k} \prec_{\tau, \ldots, \rho\lind{k+1}} \rho\lind{k} $$
		for all $k = m+1, \ldots, l-1$.
		
		We consider $(\tau, \rho\lind{l-1}',\ldots, \rho\lind{m+1}') \in \Delta$ such that $\rho\lind{m+1}' \supset \rho$. Suppose that $k$ is the largest number such that $\rho\lind{k}' \neq \rho\lind{k}$. Since $\rho\lind{k}'$ contains $\rho$ and is contained in $\rho\lind{k+1}$, we have either $\rho\lind{k}' = \widetilde{\rho}\lind{k}$, or $\rho\lind{k}' \supset \rho\lind{m+1}$. For the first case, we see that $\widetilde{\rho}\lind{k} \prec_{\tau,\ldots, \rho\lind{k+1}} \rho\lind{k}$ by the previous observation. For the second case, we can pick another chain
		$$ (\tau, \rho\lind{l-1}' ,\ldots, \rho\lind{k}', \rho\lind{k-1}'', \ldots \rho\lind{m+1}'') \in \Delta$$
		such that $\rho\lind{m+1}'' = \rho\lind{m+1}$. Then this chain is in $\cS$, and therefore, we have $\rho\lind{k}' \prec_{\tau,\ldots, \rho\lind{k+1}} \rho\lind{k}$ as well. This concludes the proof.
	\end{proof}

    \subsection{Structure of $\QQ_{X}^{H'}$}
    We now give the proof of Theorem \ref{theo:cone-over-simple}.

    \begin{proof}[Proof of Theorem \ref{theo:cone-over-simple}]
         Let $\sigma$ be a simple $n$-dimensional cone. We notice that all the faces of $\sigma$ are simple as well. Also, all the proper torus-invariant closed subvarieties $S_{\tau}$ are simplicial, hence $\IC_{S_{\tau}}^{H} \simeq \QQ_{S_{\tau}}^{H'}$. By induction, we assume that the formula holds for $a_{\tau}^{0, j}$ for all faces $\tau \subsetneq \sigma$. Note that $\lcdef(X) = 0$ and hence $\QQ_{X}^{H'}$ is a mixed Hodge module by Theorem \ref{theo:lcdef-of-simple}. Also, we immediately see that $a_{\sigma}^{0, j} = 0$ for $j \geq n/2$ by Theorem \ref{theo:MHM-structure}. Hence, it is enough to only consider $a_{\sigma}^{0, j}$ for $j < n/2$.
         
    \noindent
    \textbf{Step 1.} Relation between the face numbers.
    
    Take a hyperplane $H \subset N_{\RR}$ intersecting $\sigma$ so that $H \cap \sigma$ is a simple polytope of dimension $n-1$. Then the polynomial
    $$ (q^{2}-1)^{n-1} + f_{n-1} (q^{2}-1)^{n-2} + \ldots + f_{2}(q^{2}-1) + f_{1}$$
    is the Poincar\'{e} polynomial for the toric variety corresponding to the polar polytope of $H \cap \sigma$. Consider
    $$\widetilde{h}_{t} = \sum_{l=0}^{t} f_{n-l} {n-1-l \choose t-l} (-1)^{t-l}$$
    which is the coefficient of $q^{2(n-1-t)}$ of the polynomial above. Then we have
    $$ \widetilde{h}_{0} \leq \widetilde{h}_{1} \leq \ldots \leq \widetilde{h}_{\lround{\frac{n-1}{2}}}$$
    and $\widetilde{h}_{t} = \widetilde{h}_{n-1-t}$.

    We note that
    \begin{align*}
        \widetilde{h}_{t} - \widetilde{h}_{t-1} & = f_{n-t} +\sum_{l=0}^{t-1} \left[(-1)^{t-l} {n-1+l \choose t - l} - (-1)^{t-l-1} {n-1-l \choose t-1- l} \right] f_{n-l} \\
        & = f_{n-t} + \sum_{l=0}^{t-1} (-1)^{t-l} \left[ {n-1-l \choose t-l} + {n-1-l \choose t - 1- l} \right] f_{n-l} \\
        & = \sum_{l=0}^{t} (-1)^{t-l} {n-l \choose t - l} f_{n-l}.
    \end{align*}
    This is exactly the right-hand side for the formula for $a_{\sigma}^{0, t}$ in this Theorem. We sometimes denote $\widetilde{h}_{t}$ by $\widetilde{h}_{t}^{\sigma}$ if we want to emphasize the cone.

    \noindent
    \textbf{Step 2.} Structure of $\QQ_{X}^{H'}$ and $\DD \QQ_{X}^{H'}$, and the Ishida complex.
    
    From Theorem \ref{theo:MHM-structure} and the fact that $\QQ_{X}^{H'}$ is a genuine mixed Hodge module, and the fact that $\IC_{S_{\tau}}^{H} \simeq \QQ_{S_{\tau}}^{H'}$ for all non-zero faces $\tau \in \cP$, we have
    \begin{align*}
        \gr_{n}^{W} \QQ_{X}^{H'} & \simeq \IC_{X}^{H} \\
        \gr_{n-t} \QQ_{X}^{H'} & \simeq \bigoplus_{j \geq 1} \bigoplus_{\lambda \in \cP_{t + 2j}} \QQ_{S_{\lambda}}^{H'}(-j)^{a_{\lambda}^{0, j}} \quad \text{for } 1\leq t \leq n-2.
    \end{align*}
    Dually, we get
    \begin{align*}
        \gr_{-n}^{W} \DD \QQ_{X}^{H'} & \simeq \IC_{X}^{H}(n) \\
        \gr_{-r}^{W} \DD \QQ_{X}^{H'} & \simeq \bigoplus_{j \geq 1} \bigoplus_{\lambda \in \cP_{n-r+ 2j}} \QQ_{S_{\lambda}}^{H'} (r-j)^{a_{\lambda}^{0,j}} \quad \text{for } 2\leq r \leq n-1,
    \end{align*}
    putting $r = n-t$. For each $1 \leq j < n/2$, we consider the spectral sequence
    $$ E_{1}^{r,s} = \cH^{r+s} \left( \gr_{j} \DR \gr_{-r}^{W} \DD \QQ_{X}^{H'}\right)_{0} \implies \SheafExt_{\cO_{X}}^{j+r+s}(\duBois_{X}^{j}, \omega_{X})_{0}. $$
    We note that the right hand side is computed by the cohomology of $\Ish_{\sigma}^{n-j}$ and by Lemma \ref{lemm:exactness-of-simple-dim-c}, we see that $H^{1}(\Ish_{\sigma}^{n-j})$ is the only non-vanishing cohomology of the Ishida complex.

    We also note that
    $$ \cH^{r+s} \left( \gr_{j} \DR \QQ_{S_{\lambda}}^{H'} (n-t-l)\right)_{0}$$
    is non-zero only when $j = n-t-l$ and $r+s = - \dim S_{\lambda}$. This follows by the fact that the graded de Rham complexes of $\QQ_{S_{\lambda}}^{H'}$ are the (shifted) reflexive differentials and by Lemma \ref{lemm:dimension-graded-of-reflex-diff}.

    In order for the term $E_{1}^{r,s}$ to be non-zero, we need $r \geq j+1$ and $n - r + 2(r-j) \leq n$. Hence $j+1 \leq r \leq 2j$. For these $r$, the non-zero factor for $E_{1}^{r,s}$ comes from $l = r-j$ and hence
    $$ \dim_{\CC} E_{1}^{r,s} = \dim_{\CC} \cH^{r+s}\left( \gr_{j} \DR \bigoplus_{\lambda \in \cP_{n+r-2j}} \QQ_{S_{\lambda}}^{H'}(j)^{a_{\lambda}^{0, r-j}} \right)_{0} = \begin{cases} \sum_{\lambda \in \cP_{n+r-2j}} a_{\lambda}^{0, r-j} & \text{if } s = -2j \\ 0 & \text{otherwise.} \end{cases}$$
    Therefore, the spectral sequence degenerates at $E_{2}$ and this page is computed after taking the cohomology of
    $$ E_{1}^{j+1, -2j} \xrightarrow{d_{1}} E_{1}^{j+2, -2j} \to \ldots \xrightarrow{d_{1}} E_{1}^{2j, -2j}.$$
    Note that $\dim E_{1}^{2j, -2j} = a_{\sigma}^{0, j}$. Since $H^{1}(\Ish_{\sigma}^{n-j})$ is the only non-vanishing cohomology of the Ishida complex, we know that $E_{2}^{j+1, -2j}$ is the only non-zero term in the spectral sequence, and has rank $\dim H^{1} \Ish_{\sigma}^{n-j}$. By calculating the Euler characteristic of the Ishida complex, we see that
    $$ \dim H^{1} \Ish_{\sigma}^{n-j} = - {n \choose j} + f_{1} {n-1 \choose j} + f_{2} {n-2 \choose j} - \ldots + (-1)^{n-j-1} f_{n-j}{j \choose j}.$$
    We therefore have the equality
    \begin{align*}
        & \sum_{\lambda \in \cP_{n-j+1}} \widetilde{h}_{1}^{\lambda} - \sum_{\lambda \in \cP_{n-j+2}} \widetilde{h}_{2}^{\lambda} + \ldots + (-1)^{j-2} \sum_{\lambda \in \cP_{n-1}} \widetilde{h}_{j-1}^{\lambda} + (-1)^{j-1} a_{\sigma}^{0, j} \\
        & =  - {n \choose j} + f_{1} {n-1 \choose j} + f_{2} {n-2 \choose j} - \ldots + (-1)^{n-j-1} f_{n-j}{j \choose j},
    \end{align*}
    using induction and the fact that all faces of $\sigma$ are simple. The rest of the proof boils down to actually showing that the equality $a_{\sigma}^{0, j} = \widetilde{h}_{j}^{\sigma}$ holds.

    \noindent
    \textbf{Step 3.} Proof of the equality
    
    We will show the equality by showing that
    $$ a_{\sigma}^{0, j} = \sum_{l=0}^{j} (-1)^{j-1} {n-1-l \choose j-l} (\widetilde{h}_{n-1-l} - \widetilde{h}_{l}) + \widetilde{h}_{j} - \widetilde{h}_{j-1}.$$
    We remark that the first term of the right hand side is zero. We divide these into three terms. Hence, it is enough to verify the equality
    \begin{align} \label{eqn:master-equality-simple}
        &\underbrace{\sum_{l=0}^{j} {n-1-l \choose j-l} \widetilde{h}_{n-1-l}}_{I} - \underbrace{\sum_{l=0}^{j} {n-1-l \choose j-l} \widetilde{h}_{l}}_{II} + \underbrace{ (-1)^{j-1} (\widetilde{h}_{j} - \widetilde{h}_{j-1})}_{III} \\
        & =\underbrace{ \left(\sum_{l=1}^{j-1} (-1)^{l} \sum_{\lambda \in \cP_{n-j+l}} \widetilde{h}_{l}^{\lambda} \right)}_{IV} +\left( \sum_{l=1}^{n-j} (-1)^{l-1} f_{l} {n-l \choose j}\right) - {n\choose j}. \nonumber
    \end{align}

    \noindent
    \textbf{Term IV}. We first simplify Term IV in equation (\ref{eqn:master-equality-simple}). We first note that
    \begin{align*}
         \sum_{\lambda \in \cP_{n-j+l}} \widetilde{h}_{l}^{\lambda} 
        & = \sum_{\lambda \in \cP_{n-j+l}} \sum_{i=0}^{l} (-1)^{l} f_{n-j+i}^{\lambda} {n - j + i \choose i} \\
        & = \sum_{i=0}^{l} (-1)^{i} f_{n-j+i}^{\sigma} {j -i \choose l-i} {n-j+i \choose i}.
    \end{align*}
    Here, $f_{n-j+i}^{\lambda}$ denotes the number of $n-j+i$-dimensional faces of $\lambda$. The last equality follows from the fact that for each $n-j+i$-dimensional face, there are exactly ${j-i \choose l-i}$ number of $n-j+l$-dimensional faces containing it, since $\sigma$ is simple. Then the equality is a standard double-counting argument.
    
    Then Term IV simplifies as
    \begin{align*}
         \sum_{l=1}^{j-1} \left( \sum_{\lambda \in \cP_{n-j+l}} \widetilde{h}_{l}^{\lambda} \right) (-1)^{l}
        & = \sum_{l=1}^{j-1} \sum_{i=0}^{l} (-1)^{i+l} f_{n-j+i} {j-i \choose l - i} {n-j+i \choose i} \\
        & =\underbrace{ \sum_{l=1}^{j-1} (-1)^{l} f_{n-j} {j \choose l}}_{i=0 \text{ term}} + \sum_{i=1}^{j-1} f_{n-j+i} {n-j+i \choose i} \underbrace{\sum_{l=i}^{j-1} (-1)^{l-i} {j-i \choose l-i}}_{= (-1)^{j-i-1}} \\
        & = - (1 + (-1)^{j}) f_{n-j} + \sum_{i=1}^{j-1} (-1)^{j-i-1} f_{n-j+i} {n-j+i \choose i} \\
        & = -f_{n-j} + \sum_{i=0}^{j-1} (-1)^{j-1-i} f_{n-j+i} {n-j+i \choose i} \\
        & = - f_{n-j} + \sum_{l=1}^{j} (-1)^{l-1} f_{n-l} {n-l \choose j-l}.
    \end{align*}

    \noindent
    \textbf{Term I}. We give a small lemma before considering the term I.
    \begin{lemm}
    For positive integers $n, j$ and non-negative integer $i \in [0, n-1]$ such that $j \leq n-1$, we have
    $$ \sum_{l=0}^{\min (j, n-1-i)} (-1)^{l} {n-1-l \choose j -l} \cdot {n-1-i \choose l} = {i \choose j}.$$
    \end{lemm}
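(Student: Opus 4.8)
The plan is to prove the identity by coefficient extraction in the formal power series ring $\QQ[[x]]$, reducing it to the binomial theorem. Writing $m := n-1$ for brevity (so $m \geq 0$, $0 \leq i \leq m$, and $j \leq m$), I would first note that $\binom{m-l}{j-l} = \binom{m-l}{m-j}$, and recognize this as the coefficient $[x^{m-j}](1+x)^{m-l}$. Since $\binom{m-l}{j-l}$ vanishes for $l > j$ and $\binom{m-i}{l}$ vanishes for $l > m-i$, extending the summation range to all nonnegative integers $l$ changes nothing, so the left-hand side equals
$$ \sum_{l \geq 0} (-1)^l \binom{m-i}{l}\, [x^{m-j}](1+x)^{m-l} \;=\; [x^{m-j}]\;(1+x)^{m}\sum_{l\geq 0}\binom{m-i}{l}\Bigl(\tfrac{-1}{1+x}\Bigr)^l, $$
where the inner sum is genuinely finite (it terminates at $l = m-i \geq 0$) and each $(1+x)^{-l}$ is a bona fide element of $\QQ[[x]]$.

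Next I would apply the finite binomial theorem to collapse the inner sum to $\bigl(1 - \tfrac{1}{1+x}\bigr)^{m-i} = \bigl(\tfrac{x}{1+x}\bigr)^{m-i}$, so that the whole expression simplifies to
$$ [x^{m-j}]\;(1+x)^{m}\cdot\frac{x^{m-i}}{(1+x)^{m-i}} \;=\; [x^{m-j}]\;x^{m-i}(1+x)^{i} \;=\; [x^{i-j}]\,(1+x)^{i}. $$
This last coefficient is $\binom{i}{i-j} = \binom{i}{j}$ when $i \geq j$, and is $0 = \binom{i}{j}$ when $i < j$ (a negative power of $x$); in all cases it equals the claimed value.

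The argument has no serious obstacle; the only points requiring a line of care are (i) that truncating or extending the summation range is harmless, which follows from the vanishing of the two binomial factors outside $0 \leq l \leq \min(j, m-i)$, and (ii) that the manipulations are legitimate in $\QQ[[x]]$, where $1/(1+x) = 1 - x + x^2 - \cdots$ is a power series, so pulling $(1+x)^m$ out of a finite sum and applying the binomial theorem to $\bigl(1 + (-1/(1+x))\bigr)^{m-i}$ are valid — this is exactly where the hypothesis $i \leq n-1$ (i.e.\ $m-i \geq 0$) is used. Equivalently, one could simply invoke the standard binomial identity $\sum_{l}(-1)^l\binom{a}{l}\binom{b-l}{c-l} = \binom{b-a}{c}$ with $(a,b,c) = (n-1-i,\,n-1,\,j)$, of which the computation above is a self-contained proof specialized to the case at hand.
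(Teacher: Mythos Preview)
Your proof is correct. The paper's own proof takes a different, purely combinatorial route: it observes that $\binom{i}{j}$ counts the size-$j$ subsets of $\{1,\ldots,n-1\}$ lying inside $\{1,\ldots,i\}$, and obtains the left-hand side by inclusion--exclusion over the $n-1-i$ ``forbidden'' elements $\{i+1,\ldots,n-1\}$ --- choosing $l$ of them to be forced into the subset contributes $(-1)^l\binom{n-1-i}{l}\binom{n-1-l}{j-l}$. Your argument instead works algebraically via coefficient extraction in $\QQ[[x]]$, collapsing the sum with the binomial theorem; this has the advantage of being entirely mechanical and of exhibiting the statement as the specialization $(a,b,c)=(n-1-i,\,n-1,\,j)$ of the standard identity $\sum_l(-1)^l\binom{a}{l}\binom{b-l}{c-l}=\binom{b-a}{c}$, whereas the paper's one-line bijective proof makes the meaning of each term transparent. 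Both are short and self-contained; neither offers a real advantage for the application in the paper.
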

    \begin{proof}
        This equality can be checked by computing the number of size $j$ subsets of $\{1, \ldots n-1\}$ contained in $\{ 1, \ldots, i\}$ using the inclusion-exclusion principle.
    \end{proof}
    Then term I simplifies as follows:
    
    \begin{align*}
         \sum_{l=0}^{j} {n-1-l \choose j-l} \widetilde{h}_{n-1-l} 
        & = \sum_{l=0}^{j} \sum_{i=0}^{n-1-l} f_{n-i} {n-1-l \choose j-l} {n-1-i \choose l}(-1)^{n-1-l-i} \\
        & = \sum_{i=0}^{n-1} f_{n-i} (-1)^{n-1-i} \sum_{l=0}^{\min(j, n-1-i)} (-1)^{l} {n-1-l \choose j-l} \cdot {n-1-i \choose l} \\
        & = \sum_{i=0}^{n-1} f_{n-i} (-1)^{n-1-i} {i \choose j} \\
        & = \sum_{l=1}^{n-j} f_{l} {n-l \choose j} (-1)^{l-1} \quad \text{by putting $l = n-i$}.
    \end{align*}

    \noindent
    \textbf{Term II}. We consider Term II.
    \begin{align*}
        \sum_{l=0}^{j} {n-1-l \choose j-l} \widetilde{h}_{l} 
        & = \sum_{l=0}^{j} \sum_{i=0}^{l} (-1)^{l-i} {n-1-i \choose l-i}{n-1-l \choose j-l} f_{n-i} \\
        & = \sum_{i=0}^{j} f_{n-i} \sum_{l=i}^{j} (-1)^{l-i} {n-1-i \choose l-i}{n-1-l \choose j-l} \\
        & = \sum_{i=0}^{j} f_{n-i} \sum_{j' = 0}^{j-i} (-1)^{j'} {n-1-i \choose j'} {n-1-i- j' \choose j-i-j'} \quad \text{ by putting } j' = l-i \\
        & = \sum_{i=0}^{l} f_{n-i} \sum_{l = 0}^{j-i} (-1)^{l} \frac{(n-1-i)! \cdot (n-1-i-l)!}{l! \cdot (n-1-i-l)! \cdot (j-i-l)! \cdot(n-j-1)!} \\
        & = \sum_{i=0}^{j} f_{n-i} \sum_{l=0}^{j-i} (-1)^{l} \frac{(n-1-i)!}{(j-i)! \cdot (n-j-1)!} \frac{(j-i)!}{l! (j-i-l)!} \\
        & = \sum_{i=0}^{j} f_{n-i} {n-1-i \choose j-i} \sum_{l=0}^{j-i} (-1)^{l} {j-i \choose l} = f_{n-j}.
    \end{align*}

    \noindent
    \textbf{Term III}. For the last term, we have
    \begin{align*}
         (-1)^{j-1} (\widetilde{h}_{j} - \widetilde{h}_{j-1}) 
        & = \left(\sum_{l=0}^{j} f_{n-l} {n-1-l \choose j-l} (-1)^{l-1}\right) - \left( \sum_{l=0}^{j-1} f_{n-l} {n-1-l \choose j-1-l} (-1)^{l}\right) \\
        & = (-1)^{j-1} f_{n-j} + \sum_{l=0}^{j-1} f_{n-l} (-1)^{l-1} \left( {n-1-l \choose j-l} + {n-1-l \choose j-1-l} \right) \\
        & = (-1)^{j-1} f_{n-j} + \sum_{l=0}^{j-1} f_{n-l} (-1)^{l-1} { n-l \choose j-l} = \sum_{l=0}^{j} f_{n-l}(-1)^{l-1} {n-l \choose j-l}.
    \end{align*}

    Now, we wrap up everything. We see that the left-hand side of Equation \ref{eqn:master-equality-simple} is
    $$ -f_{n-j} + \sum_{l=1}^{n-j} f_{l} {n-l \choose j} (-1)^{l-1} + \sum_{l=0}^{j} f_{n-l} (-1)^{l-1} {n-l \choose j-l}. $$

    We also know that the right-hand side of the Equation \ref{eqn:master-equality-simple} is
    $$ -f_{n-j} + \sum_{l=1}^{j} (-1)^{l-1} f_{n-l} {n-l \choose j-l} + \sum_{l=1}^{n-j} (-1)^{l-1} f_{l} {n-l \choose j} - {n \choose j}. $$
    We see that the two equations match up. This concludes the proof of the proposition. 
    \end{proof}

    \subsection{Singular cohomology of toric varieties corresponding to simple polytopes} \label{subsec:sing-coho-simple}
    Let $P \subset N_{\RR}$ be a lattice polytope containing the origin in the interior. Then one can construct a fan whose faces are $\RR_{\geq 0}$-spans of faces of $P$, hence we can associate a proper toric variety $X_{P}$. One can consider the piecewise linear function $\psi$ on $N_{\RR}$ which is linear on each face, by assigning values $1$ on each vertex and $0$ at the origin. This corresponds to an ample $\QQ$-divisor $D_{P}$ on $X_{P}$, and hence $X_{P}$ is a projective toric variety. If $P$ is a simplicial polytope, the singular cohomology of $X_{P}$ is easy to compute, coming from the fact that the Hodge structures are pure of Hodge--Tate type, and we have an explicit description of the Hodge--Deligne polynomial (see \cite{dCMM-toricmaps}*{Corollary C}). For a general polytope, the purity of the Hodge structure breaks, which makes the computation of the singular cohomology hard. However, if $P$ is a simple polytope, the singular cohomology of $X_P$ has a nice formula as stated in Theorem \ref{theo:sing-coho-simple}. We now give  a proof of the same.

    \begin{proof}[Proof of Theorem \ref{theo:sing-coho-simple}]
        We denote $X_{P}$ by $X$. By Grothendieck duality, we have
        $$ h^{p,q}(X) = \dim \HH^{n-q} (\Ish_{X}^{n-p}).$$
        Hence, we prove the statements in terms of the hypercohomology of the Ishida complexes. Consider the graph of the piecewise linear function $\psi$ in $N_{\RR} \oplus \RR \cdot e_{n+1}$ corresponding to the ample divisor $D_{P}$. Let $\varsigma = \{ x + t e_{n+1} : t \geq \psi(x), x \in N_{\RR} \}$. Since $D_{P}$ is ample, $\varsigma$ is a strictly convex rational polyhedral cone of dimension $n+1$. Note that $\varsigma$ is a cone over a simple polytope; indeed, if we cut by the hyperplane $N_{\RR} \times \{1\}$, we recover $P$. Therefore, $H^{j}(\Ish_{\varsigma}^{i}) = 0$ for $j > 1$. Also, it is easy to directly see that $H^{0}(\Ish_{\varsigma}^{i}) =0$ for $i > 0$. Hence $\Ish_{\varsigma}^{i}$ is concentrated in cohomological degree 1 unless $i = 0$. Now, we use Theorem \ref{theo:Lefschetz} to the following situation: we consider the affine toric variety $X_{\varsigma}$ and $\pi \colon\widetilde{X}_{\varsigma} \to X_{\varsigma}$ the toric morphism obtained by inserting the ray spanned by $e_{n+1}$ in $\varsigma$. Note that the inverse image of the torus fixed point $x_{\varsigma}$ is isomorphic to $X$, and we can relate the singular cohomology of $X$ and the cohomologies of $\Ish_{\varsigma}^{i}$. 
        
        First, it is easy to see that
        $$ \dim \HH^{j}(\Ish_{X}^{0}) = \begin{cases}
            1 & j = 0 \\ 0 & j \neq 0
        \end{cases}$$
        and
        $$ \dim \HH^{j}(\Ish_{X}^{n}) = \dim H^{j}(X, \omega_{X}) \simeq \dim H^{n-j}(X, \cO_{X}) = \begin{cases}
            1 & j = n \\ 0 & j \neq n.
        \end{cases}$$
        Next, we show that $\HH^{q'}(\Ish_{X}^{p'}) = 0$ unless $q' = 1$ or $p' = q'$ by descending induction on $p'$. For this, we use the long exact sequence coming from Theorem \ref{theo:Lefschetz}:
        $$ \ldots \to  \HH^{i-1}(X, \Ish_{X}^{p'-1}) \xrightarrow{c_{1}\dual} \HH^{i}(X, \Ish_{X}^{p'}) \to H^{i}(\Ish_{\varsigma}^{p'}) \to \HH^{i}(X, \Ish_{X}^{p'-1}) \xrightarrow{c_{1}\dual} \ldots. $$
        If $p'>2$, then Theorem \ref{theo:simple-upperbound-depth} implies that $H^{i}(\Ish_{\varsigma}^{p'}) = 0$ for $i \geq 2$. And so, we have
        $$ \HH^{p'-1}(\Ish_{X}^{p'-1}) \simeq \HH^{p'} (\Ish_{X}^{p'}) $$
        and
        $$0 \to \HH^{1}(\Ish_{X}^{p'}) \to H^{1}(\Ish_{\varsigma}^{p'}) \to \HH^{1}(\Ish_{X}^{p'-1}) \to 0,$$
        and all the other terms in the long exact sequence are zero. If $p' = 2$, we have
        $$ 0 \to \HH^{1}(\Ish_{X}^{2}) \to H^{1}(\Ish_{\varsigma}^{1}) \to \HH^{1}(\Ish_{X}^{1}) \to \HH^{2}(\Ish_{X}^{2}) \to 0$$
        and all the other terms in the long exact sequence are zero. This shows the main assertion, except the values of $h^{p,q}$ for $q = n-1$.
    
        We have the following description of the Hodge--Deligne polynomial of $X$:
        $$ E(X):=\sum_{i,r,s} (-1)^{i} h^{r,s} (\gr_{r+s}^{W} H^{i}(X)) u^{r} v^{s} = \sum_{j=0}^{n} f_{j-1}(uv-1)^{n-j}.$$
        We put $f_{-1} = 1$ for convention. Indeed, one can use the additivity of the Hodge--Deligne polynomial and the fact that the Hodge--Deligne polynomial of $\CC^{\times}$ is $uv - 1$ to get the desired equality. Also, note that the Hodge structure of $H^{i}(X)$ is mixed of Hodge--Tate type by Corollary \ref{coro:sing-coho-HT-type}. This shows that for $1 \leq p < n-1$, we have
        $$ 1 + h^{p, n-1}(X) (-1)^{n-1+p} = \mathrm{coeff}_{u^{p}v^{p}} E(X) = \sum_{j=0}^{n} f_{j-1} (-1)^{n-j-p} {n-j \choose p}, $$
        and
        $$ h^{n-1, n-1}(X) = \mathrm{coeff}_{u^{n-1}v^{n-1}} E(X) =f_{0}-n.$$
        This shows the assertion.
    \end{proof}

    \begin{exam} \label{exam:sing_coh_binomial_hypersurface}(Singular cohomology of the binomial hypersurface $S = \{x_{0}\cdots x_{n} = y_{0}\cdots y_{n}\} \subset \PP^{2n+1}$). Here, we compute that singular cohomology of the binomial hypersurface given by the equation above. Consider $X = \Spec \CC[x_{0},\ldots, x_{n}, y_{0}\ldots, y_{n}]/(\prod_{i=0}^{n} x_{i} - \prod_{i=0}^{n} y_{i})$. Note that this ring is isomorphic to 
    $$\CC[e_{1},e_{2}e_{3},\ldots, e_{2n}e_{2n+1}, e_{2n+1}, e_{2n}e_{2n-1},\ldots, e_{2}e_{1}]\subset \CC[e_{1},\ldots, e_{2n+1}]$$
    by sending
    \begin{align*}
        & x_{0} \mapsto e_{1}, x_{1}\mapsto e_{2}e_{3},\ldots, x_{n} \mapsto e_{2n} e_{2n+1} \\
        & y_{0} \mapsto e_{2n+1}, y_{1} \mapsto e_{2n}e_{2n-1},\ldots, y_{n} \mapsto e_{2}e_{1}.
    \end{align*}
    Note that $X = X_{\sigma}$ is an affine toric variety, where $\sigma$ is a $2n+1$-dimensional full-dimensional cone whose dual cone $\sigma\dual$ is spanned by the vectors
    \begin{align*}
        & v_{0} = e_{1}, \quad v_{1} = e_{2} + e_{3},\quad v_2 = e_4 + e_5, \quad \ldots \quad v_{n} = e_{2n} + e_{2n+1} \\
        & u_{0} = e_{2n+1}, \quad u_{1} = e_{2n} + e_{2n-1}, \quad u_2 = e_{2n-2} + e_{2n-3}, \quad \ldots \quad  u_{n} = e_{2} + e_{1}.
    \end{align*}
    Observe that the span of the $2n+2$ non-zero vectors $\{ u_{i}\}_{i=0}^{n} \cup \{ v_{i}\}_{i=0}^{n}$ is $(2n+1)$-dimensional. Additionally, we have the relation $v_{0} = \sum_{i=0}^{n} u_{i} - \sum_{i=1}^{n} v_{i}$ which involves \textit{all} the $u_i$'s and $v_i$'s. Therefore, every subset of $\{ u_{i}\}_{i=0}^{n} \cup \{ v_{i}\}_{i=0}^{n}$ of size $2n+1$ is linearly independent which shows that $\sigma\dual$ is a cone over a simplicial polytope, or equivalently, $\sigma$ is a cone over a simple polytope. There are a few possibilities for a simplicial polytope of dimension $2n$ with $2n+2$ vertices and they are completely classified in \cite{Grunbaum:polytope-book}*{\S6.1}. In particular, the type of the polytope is determined by the signs appearing in the linear relation between the $2n+2$ vectors, and $\sigma\dual$ is combinatorially a cone over $T_{n}^{2n}$, following Grünbaum's notation. This is combinatorially equivalent to the cyclic polytope $C(2n+2, 2n)$. Then the face number for the simple polytope obtained by a hyperplane section of $\sigma$ is given by
    $$ f_{l} = { 2n+2 \choose l+2} -2 { n+1 \choose l+2}.\footnote{We reindexed the face numbers from \cite{Grunbaum:polytope-book}*{6.1.2} so that the $f_{l}$'s match up with Theorem \ref{theo:sing-coho-simple}} $$
    This, along with Theorem \ref{theo:sing-coho-simple}, allows us to describe the Hodge--Du Bois diamond of the binomial hypersurface $\{ x_{0}\cdots x_{n} = y_{0} \cdots y_{n} \} \subset \PP^{2n+1}$. In particular, if we consider the singular cubic fourfold given by $\{ xyz = uvw \} \subset \PP^{5}$, the face numbers are $(f_{0}, f_{1}, f_{2}, f_{3}) = (9, 18, 15, 6)$ and we get the following description for the Hodge--Du Bois diamond:
    \begin{center}
    \begin{tabular}{c | c c c c c}
    4 & 0 & 0 & 0 & 0 & 1 \\
    3 & 0 & 1 & 4 & 5 & 0 \\
    2 & 0 & 0 & 1 & 0 & 0 \\
    1 & 0 & 1 & 0 & 0 & 0 \\
    0 & 1 & 0 & 0 & 0 & 0 \\
    \hline 
    $q/p$ & 0 & 1 & 2 & 3 & 4.
    \end{tabular}
    \end{center}
    The horizontal index represents $p$ and the vertical index represents $q$, in $\dim H^{q}(S, \duBois_{S}^{p})$. The singular cohomologies $H^{\ast}(S, \QQ)$ are all mixed of Hodge--Tate type, so this table determines the weight pieces on the singular cohomology as well.
    \end{exam}
    
    {\bf Acknowledgments.} We would like to thank Mircea Musta\c{t}\u{a} for numerous helpful discussions and Lei Xue for several discussions on polytopes. The second author thanks Mihnea Popa for asking questions about the depth of reflexive differentials that led us to some of the results in this paper.

    \appendix
    \section{Explicit calculation for $\QQ_{X}^{H'}$ in low dimensions} \label{sec:appendix-explicit-calculation}
	In the appendix, we give an explicit calculation for the coefficients showing up in Theorem \ref{theo:MHM-structure}. We calculate these numbers up to dimension 5, and address the limitation of our method in dimension 6. Of course, it is enough to deal with the affine case.
	
	\subsection{Dimension 0, 1, and 2} Since all toric varieties of dimension $\leq 2$ are simplicial, we have $\QQ_{X}^{H'} \simeq \IC_{X}^{H}$.
	
	\subsection{Dimension 3} Let $X$ be a 3-dimensional affine toric variety corresponding to a 3-dimensional cone $\sigma$. We know that $\lcdef(X) = 0$, so $\cH^{0}\QQ_{X}^{H'} = \QQ_{X}^{H'}$. Writing down Theorem \ref{theo:MHM-structure}, we have
	$$ \gr_{3}^{W} \QQ_{X}^{H'} \simeq \IC_{X}^{H}, \qquad \gr_{2}^{W} \QQ_{X}^{H'} \simeq \QQ_{x_{\sigma}}^{H}(-1)^{a_{\sigma}^{0,1}}.$$
	It is enough to determine the number $a_{\sigma}^{0,1}$. In this case, one can directly appeal to Theorem \ref{theo:cone-over-simplicial-polytope} and we get $a_{\sigma}^{0,1} = f_{1}(\sigma) -3$, where $f_{1}(\sigma)$ is the number of rays in $\sigma$.
	
	\subsection{Dimension 4} Let $X$ be a 4-dimensional affine toric variety corresponding to a cone $\sigma$. Let $\cP$ be the fan associated to $\sigma$. We list the terms in Theorem \ref{theo:MHM-structure}.
	\begin{enumerate}
		\item $\gr_{4}^{W} \cH^{0}\QQ_{X}^{H'} \simeq \IC_{X}^{H}$,
		\item $\gr_{3}^{W} \cH^{0} \QQ_{X}^{H'} \simeq \bigoplus_{\lambda \in \cP_{3}} \IC_{S_{\lambda}}^{H}(-1)^{a_{\lambda}^{0,1}}$,
		\item $\gr_{2}^{W} \cH^{0} \QQ_{X}^{H'} \simeq \QQ_{x_{\sigma}}^{H}(-1)^{a_{\sigma}^{0,1}}$,
		\item $\gr_{2}^{W} \cH^{-1} \QQ_{X}^{H'} \simeq \QQ_{x_{\sigma}}^{H}(-1)^{a_{\sigma}^{1,1}}$.
	\end{enumerate}
	The computation for $a_{\lambda}^{0,1}$ is already done in the previous section, and therefore it remains to determine the two numbers $a_{\sigma}^{0,1}$ and $a_{\sigma}^{1,1}$. Dualizing $\QQ_{X}^{H'}$ gives the following:
	\begin{enumerate}
		\item $\gr_{-4}^{W} \cH^{0} \DD\QQ_{X}^{H'} \simeq \IC_{X}^{H} (4)$,
		\item $\gr_{-3}^{W} \cH^{0} \DD \QQ_{X}^{H'} \simeq \bigoplus_{\lambda \in \cP_{3}} \IC_{S_{\lambda}}^{H}(2)^{a_{\lambda}^{0,1}}$,
		\item $\gr_{-2}^{W} \cH^{0} \DD\QQ_{X}^{H'} \simeq \QQ_{x_{\sigma}}^{H}(1)^{a_{\sigma}^{0,1}}$,
		\item $\gr_{-2}^{W} \cH^{1} \DD \QQ_{X}^{H'} \simeq \QQ_{x_{\sigma}}^{H}(1)^{a_{\sigma}^{1,1}}$.
	\end{enumerate}
	We use the spectral sequence
	$$ E_{2}^{p,q}(k) = \cH^{p}(\gr_{k} \DR \cH^{q} \DD \QQ_{X}^{H'}) \implies \SheafExt_{\cO_{X}}^{k+p+q}(\duBois_{X}^{k}, \omega_{X}).$$
	By examining the possible non-zero entries of $E_{2}^{p,q}$ this spectral degenerates at page $E_{2}$. We also note that the whole spectral sequence carries a natural grading by $M$, and we focus on the degree 0 part. By taking $k = 1$, we get
	$$a_{\sigma}^{1,1} = \dim E_{2}^{0, 1}(1) =  \dim \SheafExt_{\cO_{X}}^{2} (\duBois_{X}^{1}, \omega_{X})_{0} = \dim H^{2} \Ish_{\sigma}^{3}.$$
	From the spectral sequence
	$$ E_{1}^{r,s} = \left(\cH^{r+s} \gr_{1} \DR \gr_{-r}^{W} \cH^{0} \DD \QQ_{X}^{H'} \right)_{0} \implies \left( \cH^{r+s} \gr_{1} \DR \cH^{0} \DD \QQ_{X}^{H'}\right)_{0}, $$
    we can see that $a_{\sigma}^{0, 1} = \dim (\cH^{0}\gr_{1}\DR \cH^{0} \DD \QQ_{X}^{H'})_{0}$. Also, from the spectral sequence $E_{2}^{p, q}(1)$, we get
    $$ a_{\sigma}^{0, 1} =  \dim \left( \cH^{0} \gr_{1} \DR \cH^{0} \DD \QQ_{X}^{H'} \right)_{0} = \dim \SheafExt_{\cO_{X}}^{1}(\duBois_{X}^{1}, \omega_{X})_{0} = \dim H^{1}(\Ish_{\sigma}^{3}). $$
    This concludes the calculation of the 4-dimensional case.
	
	\subsection{Dimension 5.} We continue the computation in dimension 5. Let $X$ be a 5-dimensional toric variety corresponding to a cone $\sigma$ and let $\cP$ be the fan associated to $\sigma$. We list the terms as in the previous example. The one that we need is the behavior of the dual $\DD \QQ_{X}^{H'}$, so we list them.

    \begin{enumerate}
		\item $\gr_{-5}^{W} \cH^{0} \DD\QQ_{X}^{H'} \simeq \IC_{X}^{H}(5)$,
		\item $\gr_{-4}^{W} \cH^{0} \DD\QQ_{X}^{H'} \simeq \bigoplus_{\mu \in \cP_{3}} \IC_{S_{\mu}}^{H}(3)^{a_{\mu}^{0,1}} \oplus \QQ_{x_{\sigma}}^{H}(2)^{a_{\sigma}^{0,2}}$,
		\item $\gr_{-3}^{W} \cH^{0} \DD\QQ_{X}^{H'} \simeq \bigoplus_{\lambda \in \cP_{4}} \IC_{S_{\lambda}}^{H}(2)^{a_{\lambda}^{0,1}}$,
		\item $\gr_{-2}^{W} \cH^{0} \DD\QQ_{X}^{H'} \simeq \QQ_{x_{\sigma}}^{H} (1)^{a_{\sigma}^{0,1}}$,
		\item $\gr_{-3}^{W} \cH^{1} \DD\QQ_{X}^{H'} \simeq \bigoplus_{\lambda \in \cP_{4}} \IC_{S_{\lambda}}^{H}(2)^{a_{\lambda}^{1,1}}$,
		\item $\gr_{-2}^{W} \cH^{1} \DD\QQ_{X}^{H'} \simeq \QQ_{x_{\sigma}}^{H}(1)^{a_{\sigma}^{1,1}}$,
		\item $\gr_{-2}^{W} \cH^{2} \DD\QQ_{X}^{H'} \simeq \QQ_{x_{\sigma}}^{H}(1)^{a_{\sigma}^{2,1}}$.
	\end{enumerate}
    
	It is enough to determine $a_{\sigma}^{0,1}, a_{\sigma}^{1,1}, a_{\sigma}^{2,1}$, and $a_{\sigma}^{0,2}$.
	We record that
	$$ \dR_{\sigma}(K, L) = L^{-5} + \alpha L^{-3}K^{-1} + \beta L^{-1} K^{-2},$$
	where $\alpha$ and $\beta$ are some numbers that we can explicitly compute, which is combinatorially determined by $\sigma$ (see \S\ref{sec:ICTV-summary} for the definition of $\dR$). We start from the spectral sequence
	$$ E_{2}^{p,q}(k) = \cH^{p} \gr_{k} \DR \cH^{q} \DD \QQ_{X}^{H'} \implies \SheafExt_{\cO_{X}}^{k+ p+ q}(\duBois_{X}^{k}, \omega_{X}).$$
	The following are the possible non-zero slots of this spectral sequence
	$$ \begin{tikzcd}
		& & & & & \bullet \\
		& & & & \bullet & \bullet \\
		\bullet &\bullet & \bullet & \bullet & \bullet & \bullet,
	\end{tikzcd} $$
	where the right bottom slot is $(p, q) = (0,0)$. We see that the spectral sequence is degenerate at page $E_{2}$. First, by taking $k = 1$, it is immediate to see that
	\begin{align*}
        a_{\sigma}^{2, 1} & = \dim \SheafExt_{\cO_{X}}^{3}(\duBois_{X}^{1}, \omega_{X})_{0} = \dim H^{3} \Ish_{\sigma}^{4}, \\
		a_{\sigma}^{1,1} & = \dim \SheafExt_{\cO_{X}}^{2} (\duBois_{X}^{1}, \omega_{X})_{0} = \dim H^{2} \Ish_{\sigma}^{4} \\
		a_{\sigma}^{0, 1}& = \dim \SheafExt_{\cO_{X}}^{1}(\duBois_{X}^{1}, \omega_{X})_{0} = \dim H^{1} \Ish_{\sigma}^{4}.
	\end{align*}
    Indeed, $\IC_{X}^{H}(5), \IC_{S_{\mu}}^{H}(3),$ and $\IC_{S_{\lambda}}^{H}(2)$ have no degree zero part after taking the cohomologies of $\gr_{1}\DR$.
    
    It remains to compute $a_{\sigma}^{0, 2}$. For this, we put $k = 2$ and we see that
	$$ \dim \left(\cH^{0} \gr_{2} \dR \cH^{0} \DD \QQ_{X}^{H'}\right)_{0} + \dim \left(\cH^{-1} \gr_{2} \DR \cH^{1} \DD \QQ_{X}^{H'}\right)_{0} = \dim \SheafExt^{2}(\duBois_{X}^{2}, \omega_{X})_{0}.$$

    From the spectral sequence
    $$ E_{1}^{r, s} = \cH^{r+s} \gr_{2} \DR \gr_{-r}^{W} \cH^{1} \DD \QQ_{X}^{H'} \implies \cH^{r+s} \gr_{2} \DR \cH^{1} \DD \QQ_{X}^{H'},$$
    we see that
    $$ \dim \left(\cH^{-1} \gr_{2} \DR \cH^{1} \DD \QQ_{X}^{H'} \right)_{0} = \sum_{\lambda \in \cP_{4}} a_{\lambda}^{1,1} = \sum_{\lambda \in \cP_{4}} \dim H^{2}\Ish_{\lambda}^{3}.$$
    Also, we see that the spectral sequence computing the degree 0 part of $\cH^{j} \gr_{2} \DR \cH^{0} \DD \QQ_{X}^{H'}$ degenerates at page 2 and it is the cohomology of the following two term complex sitting in degrees $-1$ and $0$:
    $$ \left(\cH^{-1} \gr_{2} \DR \gr_{-3}^{W}\cH^{0} \DD \QQ_{X}^{H'} \right)_{0} \to \left(\cH^{0} \gr_{2} \DR \gr_{-4}^{W} \cH^{0} \DD \QQ_{X}^{H'}\right)_{0}. $$
    The dimension of the vector spaces are $\sum_{\lambda} a_{\lambda}^{0, 1}$ and $a_{\sigma}^{0, 2}$, respectively. Let $r$ be the rank of this map. Then we have
    \begin{align*}
        \sum_{\lambda \in \cP_{4}} a_{\lambda}^{0, 1} - r & = \dim H^{1}(\Ish_\sigma^{3}) ,\\
        a_{\sigma}^{0,2} - r + \sum_{\lambda \in \cP_{4}} a_{\lambda}^{1,1} & = \dim H^{2} (\Ish_{\sigma}^{3}).
    \end{align*}
	Therefore, we are able to determine $r$ and $a_{\sigma}^{0,2}$.
	
	\begin{rema}
		While following the computation, we have two inequalities that seem to be non-trivial. We know that
		$$ \sum_{\lambda \in \cP_{4}} a_{\lambda}^{0,1}= \sum_{\lambda \in \cP_{4}} \dim H^{1} \Ish_{\lambda}^{3} \geq \dim H^{1} \Ish_{\sigma}^{3} $$
		and
		$$ \sum_{\lambda \in \cP_{4}} a_{\lambda}^{1,1} = \sum_{\lambda \in \cP_{4}} \dim H^{2} \Ish_{\lambda}^{3} \leq \dim H^{2} \Ish_{\sigma}^{3}. $$
		It would be interesting if one could see this inequality directly by a combinatorial argument.
	\end{rema}

	\subsection{Dimension 6.} We give an attempt to compute the coefficients in Theorem \ref{theo:MHM-structure} in dimension 6. Let $X$ be a 6-dimensional toric variety corresponding to a cone $\sigma$ and let $\cP$ be the fan associated to $\sigma$. We list the terms for $\DD \QQ_{X}^{H'}$ as in the previous example.
	\begin{enumerate}
		\item $\gr_{-6}^{W} \cH^{0} \DD\QQ_{X}^{H'} \simeq \IC_{X}^{H}(6)$,
		\item $\gr_{-5}^{W} \cH^{0} \DD\QQ_{X}^{H'} \simeq \bigoplus_{\mu \in \cP_{3}} \IC_{S_{\mu}}^{H}(4)^{a_{\mu}^{0,1}} \oplus \bigoplus_{\tau \in \cP_{5}} \IC_{S_{\tau}}^{H}(3)^{a_{\tau}^{0,2}}$,
		\item $\gr_{-4}^{W} \cH^{0} \DD\QQ_{X}^{H'} \simeq\bigoplus_{\lambda \in \cP_{4}} \IC_{S_{\lambda}}^{H}(3)^{a_{\lambda}^{0, 1}} \oplus \QQ_{x_{\sigma}}^{H}(2)^{a_{\sigma}^{0, 2}}$,
		\item $\gr_{-3}^{W} \cH^{0} \DD\QQ_{X}^{H'} \simeq \bigoplus_{\tau \in \cP_{5}} \IC_{S_{\tau}}^{H}(2)^{a_{\tau}^{0, 1}}$,
		\item $\gr_{-2}^{W} \cH^{0} \DD\QQ_{X}^{H'} \simeq \QQ_{x_{\sigma}}^{H}(1)^{a_{\sigma}^{0,1}}$,
		\item $\gr_{-4}^{W} \cH^{1}\DD\QQ_{X}^{H'} \simeq \bigoplus_{\lambda \in \cP_{4}} \IC_{S_{\lambda}}^{H}(3)^{a_{\lambda}^{1,1}} \oplus \QQ_{x_{\sigma}}^{H}(2)^{a_{\sigma}^{1,2}}$,
		\item $\gr_{-3}^{W} \cH^{1} \DD\QQ_{X}^{H'} \simeq \bigoplus_{\tau \in \cP_{5}} \IC_{S_{\tau}}^{H}(2)^{a_{\tau}^{1,1}}$,
		\item $\gr_{-2}^{W} \cH^{1} \DD\QQ_{X}^{H'} \simeq \QQ_{x_{\sigma}}^{H}(1)^{a_{\sigma}^{1,1}}$,
		\item $\gr_{-3}^{W} \cH^{2} \DD\QQ_{X}^{H'} \simeq \bigoplus_{\tau\in \cP_{5}} \IC_{S_{\tau}}^{H}(2)^{a_{\tau}^{2, 1}}$,
		\item $\gr_{-2}^{W} \cH^{2} \DD\QQ_{X}^{H'} \simeq \QQ_{x_{\sigma}}^{H}(1)^{a_{\sigma}^{2, 1}}$,
		\item $\gr_{-2}^{W} \cH^{3} \DD\QQ_{X}^{H'} \simeq \QQ_{x_{\sigma}}^{H}(1)^{a_{\sigma}^{3, 1}}$.
	\end{enumerate}
	It is enough to determine $a_{\sigma}^{0, 1}, a_{\sigma}^{0, 2}, a_{\sigma}^{1, 1}, a_{\sigma}^{1, 2}, a_{\sigma}^{2, 1},$ and $a_{\sigma}^{3, 1}$. We point out that we are able to determine $a_{\sigma}^{0, 1}, a_{\sigma}^{1, 1}, a_{\sigma}^{2, 1}$, and $a_{\sigma}^{3, 1}$, but not $a_{\sigma}^{0,2}$ and $a_{\sigma}^{1,2}$.

    We start from the spectral sequence
	$$ E_{2}^{p,q}(k) = \cH^{p} \gr_{k} \DR \cH^{q} \DD \QQ_{X}^{H'} \implies \SheafExt_{\cO_{X}}^{k+p+q} (\duBois_{X}^{k}, \omega_{X}).$$
	We draw the possible non-zero slots of this spectral sequence
	$$ \begin{tikzcd}
		& & & & & & \bullet \\
		& & & & & \bullet & \bullet \\
		& & & &\bullet \ar[rrd, "d_{2}"]& \bullet & \bullet \\
		\bullet & \bullet &\bullet & \bullet & \bullet & \bullet & \bullet.
	\end{tikzcd} $$

    Using the same method as the 5-dimensional case, it is not so hard to see that
    \begin{align*}
        a_{\sigma}^{3,1}&= \dim H^{4} \Ish_{\sigma}^{5}, \\
        a_{\sigma}^{2, 1} &  = \dim H^{3} \Ish_{\sigma}^{5}, \\
        a_{\sigma}^{1, 1} & = \dim H^{2} \Ish_{\sigma}^{5}.
    \end{align*}
	
	We attempt to compute $a_{\sigma}^{0, 2}$ and $a_{\sigma}^{1, 2}$, hence we put $k = 2$ for $E_{2}^{p,q}(k)$. Considering
	$$ \widetilde{E}_{1}^{r, s} = \cH^{r +s} \left(\gr_{2} \DR  \gr_{-r}^{W} \cH^{2} \DD\QQ_{X}^{H'} \right) \implies \cH^{r + s} \left( \gr_{2} \DR \cH^{2} \DD\QQ_{X}^{H'}\right),$$
	we see that $\dim \cH^{-1}\left( \gr_{2} \DR \cH^{2} \DD \QQ_{X}^{H'} \right)_{0} = \sum_{\tau \in \cP_{5}} a_{\tau}^{2, 1}$.
	Considering
	$$ \widetilde{E}_{1}^{r, s} = \cH^{r +s} \left(\gr_{2} \DR  \gr_{-r}^{W} \cH^{1} \DD\QQ_{X}^{H'} \right) \implies \cH^{r + s} \left( \gr_{2} \DR \cH^{1} \DD\QQ_{X}^{H'}\right)$$
	on degree zero, we have a possible non-trivial map
	$$ d_{1}^{3, -4}: \cH^{-1} \left( \gr_{2} \DR \gr_{-3}^{W} \cH^{1} \DD \QQ_{X}^{H'} \right)_{0} \to \cH^{0}  \left(\gr_{2} \DR \gr_{-4}^{W} \cH^{1} \DD \QQ_{X}^{H'} \right)_{0}.$$
	Note that the dimension of the source is $\sum_{\tau \in \cP_{5}} a_{\tau}^{1,1}$ and the dimension of the target is $a_{\sigma}^{1, 2}$. Let the rank of this map be $r_{1}$. Then we have
	\begin{align*}
		& \dim \cH^{-2} \left(\gr_{2} \DR \cH^{1} \DD \QQ_{X}^{H'} \right)_{0} = 0 \\
		& \dim \cH^{-1} \left(\gr_{2} \DR \cH^{1} \DD \QQ_{X}^{H'} \right)_{0} = \sum_{\tau \in \cP_{5}} a_{\tau}^{1,1} - r_{1} \\
		& \dim \cH^{0} \left(\gr_{2} \DR \cH^{1} \DD \QQ_{X}^{H'} \right)_{0} = a_{\sigma}^{1, 2} - r_{1}.
	\end{align*}
	This shows that 
	$$d_{2}^{-2, 1} : \cH^{-2} \left(\gr_{2} \DR \cH^{1} \DD \QQ_{X}^{H'}\right)_{0} \to \cH^{0} \left( \gr_{2} \DR \cH^{0} \DD \QQ_{X}^{H'} \right)_{0}$$
	is the zero map.
	Finally, by considering the same spectral sequence for $\cH^{0} \DD \QQ_{X}^{H'}$, we get
	\begin{align*}
		& \dim \cH^{-1} \left(\gr_{2} \DR \cH^{0} \DD \QQ_{X}^{H'} \right)_{0} = \sum_{\tau \in \cP_{5}} a_{\tau}^{0,1} - r_{0} \\
		& \dim \cH^{0} \left(\gr_{2} \DR \cH^{0} \DD \QQ_{X}^{H'} \right)_{0} = a_{\sigma}^{0, 2} - r_{0},
	\end{align*}
    where $r_{0}$ is the rank of the map
    $$ d_{1}^{3, -4}: \cH^{-1} \left( \gr_{2} \DR \gr_{-3}^{W} \cH^{0} \DD \QQ_{X}^{H'} \right)_{0} \to \cH^{0}  \left(\gr_{2} \DR \gr_{-4}^{W} \cH^{0} \DD \QQ_{X}^{H'} \right)_{0}.$$
    
	Wrapping all up, we have
	\begin{align*}
		\dim H^{3} \Ish_{\sigma}^{3} & = \sum_{\tau \in \cP_{5}} a_{\tau}^{2, 1} + \left( a_{\sigma}^{1, 2} - r_{1} \right) \\
		\dim H^{2} \Ish_{\sigma}^{3} & = \left( \sum_{\tau \in \cP_{5}} a_{\tau}^{1,1} - r_{1} \right) + \left( a_{\sigma}^{0, 2} - r_{0} \right) \\
		\dim H^{1} \Ish_{\sigma}^{3} & = \sum_{\tau \in \cP_{5}} a_{\tau}^{0, 1} - r_{0}.
	\end{align*}
	We have three equations, with four indeterminacies $r_{0}, r_{1}, a_{\sigma}^{0, 2}$, and $a_{\sigma}^{1,2}$. This is the reason we are not able to determine the numbers $a_{\sigma}^{1, 2}$ and $a_{\sigma}^{0, 2}$.

	\bibliographystyle{alpha}
	\bibliography{Reference}

\end{document}